\documentclass[12pt]{amsproc}
%%%%%%%%%%%%%%%%%%%%%%%%%%%%%%%%%%%%%%%%%%%%%%%%%%%%%%%%%%%%%%%%%%%%%%%%%%%%%%%%%%%%%
%\usepackage{Environ}
%\usepackage[cmex10]{amsmath}
\usepackage{amsmath}
\usepackage{amssymb, graphicx}
\usepackage{amsthm}
\usepackage{bm}
\usepackage{verbatim}
\usepackage{setspace}
\usepackage[margin=3.0cm]{geometry}
\usepackage[english]{babel}
\usepackage{tabularx}
\usepackage[all]{xy}
\usepackage{anyfontsize}
\usepackage{color}
\usepackage{enumitem}
\usepackage{hyperref}
\hypersetup{
    colorlinks=false, %set true if you want colored links
    linktoc=all,     %set to all if you want both sections and subsections linked
    linkcolor=blue,
        citecolor=red,
    filecolor=black,
    urlcolor=green%choose some color if you want links to stand out
}
\usepackage[all]{hypcap}
\usepackage{layouts}
\usepackage{multibib}
\newcites{sec}{List Of Publications}
%%%%%%%%%%%%%%%%%%%%%%%%%%%%%%%%%%%%%%%%%%%%%%%%%%%%%%%%%
\theoremstyle{plain}
\newtheorem{theorem}{Theorem}

\newtheorem{lemma}{Lemma}

\newtheorem{obs}{Observation}

\theoremstyle{definition}
\newtheorem{defn}{Definition}
\newtheorem{remark}{Remark}
\newtheorem{example}{Example}

\newcommand{\mathsym}[1]{{}}

%%%%%%%%%%%%%%%%%%%%%%%%%%%%%%%%%%%%%%%%%%%%%%%%%%%%%%%%%

%Greek Symbols

\newcommand{\ga}{\alpha}
\newcommand{\gb}{\beta}
\newcommand{\gga}{\gamma}
\newcommand{\gd}{\delta}
\newcommand{\gep}{\epsilon}

\newcommand{\gl}{\lambda}
\newcommand{\gm}{\mu}

\newcommand{\gp}{\pi}

\newcommand{\gr}{\rho}

%Greek Capital Letters

\newcommand{\Gl}{\Lambda}

% User-Defined Symbols
%\newcommand{\grpp}{R_{\ul{\gl}}}
%\newcommand{\grppp}{R_{\ul{\gl}^{'}}}
%\newcommand{\grpppp}{R_{\ul{\gl}^{''}}}
%\newcommand{\grpppI}{R_{\ul{\gl^{'}/I}}}
%\newcommand{\autgp}{G_{\ul{\gl}}}
%\newcommand{\autgpp}{G_{\ul{\gl}^{'}}}
%\newcommand{\autgppp}{G_{\ul{\gl}^{''}}}

\newcommand{\grpp}{R_{\gl}}
\newcommand{\grppp}{R_{\gl^{'}}}
\newcommand{\grpppp}{R_{\gl^{''}}}

\newcommand{\autgp}{G_{\gl}}

\newcommand{\id}[1]{\mathrm{id}_{#1}}

\newcommand{\subs}{\subset}
\newcommand{\sups}{\supset}

\newcommand{\nin}{\notin}

\newcommand{\ti}{\tilde}

\newcommand{\ugl}{(\gl = \gl_{1}^{\gr _1}>\gl_{2}^{\gr _2}>\gl_{3}^{\gr _3}>\ldots>\gl_{k}^{\gr _k})}
\newcommand{\mattwo}[4]{
\begin{pmatrix}
  #1 & #2\\ #3 & #4
\end{pmatrix}
}

\newcommand{\mbb}{\mathbb}
\newcommand{\mrm}{\mathrm}

\newcommand{\mcl}{\mathcal}

\newcommand{\ul}{\underline}

\newcommand{\us}{\underset}
\newcommand{\os}{\overset}
\newcommand{\lla}{\longleftarrow}
\newcommand{\lra}{\longrightarrow}
\newcommand{\llra}{\longleftrightarrow}

\newcommand{\A}{R}
\newcommand{\N}{\mbb N}
\newcommand{\Z}{\mbb Z}

\newcommand{\AAA}[1]{ R/\gp^{#1} R}
\newcommand{\ZZ}[1]{\Z/p^{#1}\Z}

\newcommand{\bc}{\bigcup}
\newcommand{\es}{\emptyset}

\newcommand{\fo}[3]{
\begingroup
{\fontsize{#1}{#2}\selectfont {#3}}
\endgroup
}

\let\hom\relax% Set equal to \relax so that LaTeX thinks it's not defined
\DeclareMathOperator{\hom}{Hom}
\newcommand{\Hom}{\hom}
\let\End\relax% Set equal to \relax so that LaTeX thinks it's not defined
\DeclareMathOperator{\End}{End}
\newcommand{\Endo}{\End}

\let\oldthebibliography\thebibliography
\renewcommand\thebibliography[1]{
  \oldthebibliography{#1}
 \setlength{\parskip}{0pt}
 \setlength{\itemsep}{0pt plus 0.1ex}
}
%%%%%%%%%%%%%%%%%%%%%%%%%%%%%%%%%%%%%%%%%%%%%%%%%%%%%%%%%%%%%%%%%%%%%%%%%%%%%%%%%%%%%%
\title[Permutation Representations]{Permutation Representations of the Orbits of the Automorphism Group of a Finite Module over Discrete Valuation Ring}
\author{\fo{12}{12}{C.P. ANIL KUMAR}}
\address{The Institute of Mathematical Sciences, Chennai.}
\keywords{Finite Abelian Groups, Finite Group Actions, Automorphism Orbits, Modules over Discrete Valuation Rings, Endomorphism Algebras, Permutation Representations, Multiplicity Free, Distributive Lattice, Congruence Residue Systems}
\subjclass[2010]{20K01,20K30,05E15,20C05,20C33,16S50,06B15,11A07}
\begin{document}
\maketitle
\begin{abstract}
Consider a discrete valuation ring $\A$ whose residue field is finite of cardinality at least $3$.
For a finite torsion module, we consider transitive subsets $O$ under the action of the
automorphism group of the module. We prove that the associated permutation representation on the
complex vector space $C[O]$ is multiplicity free. This is achieved by obtaining a complete
description of the transitive subsets of $O\times O$ under the diagonal action of the automorphism group.
\end{abstract}
\section{Introduction}
Let $\A$ be a discrete valuation ring whose maximal ideal is generated by
a uniformizing element $\pi$, and which has a finite residue field $\mbb{F}_q$. Let $\Gl$
denote the set of all sequences of symbols of the form
\begin{equation*}
(\gl_1^{\gr_1},\gl_2^{\gr_2},\dotsc,\gl_k^{\gr_k}),
\end{equation*}
where $\gl_1>\gl_2>\dotsc>\gl_k$ is a strictly decreasing sequence of positive
integers and $\gr_1,\gr_2,\dotsc,\gr_k$ are positive integers. We allow the case where
$k=0$, corresponding to the empty sequence, which we denote by $\es$.
Every finite $\A$-module $\grpp$ is, up to isomorphism, of the form
\begin{equation}
\label{eq:module-form}
  \grpp = (\AAA {\gl_1})^{\oplus \gr_1}\oplus(\AAA {\gl_2})^{\oplus \gr_2}\oplus\dotsc \oplus
  (\AAA {\gl_k})^{\oplus \gr_k}
\end{equation}
for a unique $\gl \in \Gl$. Let $\autgp$ denote the automorphism
group of $\grpp$.

For a $\gl \in\Gl$ the group $\autgp$ acts on
$\grpp^n$ by the diagonal action
\begin{equation*}
  g\cdot (x_1,\dotsc,x_n) = (g(x_1),\dotsc,g(x_n)) \text{ for $x_i\in \grpp$ and $g\in \autgp$.}
\end{equation*}

For $n=1$, this is just the action on $\grpp$ of its automorphism group $\autgp$.
A description of the orbits for this group action has been available for more than a
hundred years (see Miller~\cite{1905}, Birkhoff~\cite{GarrettBirkhoff01011935},
and Dutta-Prasad~\cite{MR2793603}).

For $n=2$ the group $\autgp \subs \autgp \times \autgp$ is the
diagonal subgroup which acts on $\grpp \times \grpp$. For each orbit $\mcl{O}$ there exist
certain representatives $e(\mcl{O})$ called canonical forms.
The transitive subsets of $R_{\lambda}$ under the stabilizer of $e(O)$ have been described in
~\cite{AnilAmri}, Theorem $27$.

We obtain a description of transitive subsets for the diagonal
action of $\autgp$ on $\mcl{O} \times \mcl{O}$ in
Theorems~\ref{theorem:CanonicalSimilarPair}~\ref{theorem:GeneralOrbitPairMax}~\ref{theorem:GeneralOrbitPairNonMax}~\ref{theorem:GeneralOrbitPairConverse}.
We use this description to show that the permutation representation $\mbb{C}[\mcl{O}]$ of
$\autgp$ on an orbit $\mcl{O}$ is multiplicity free.

Let $R$ be any DVR with residue field of cardinality at least three. Let $\grpp$ be a finite $R-$module corresponding to partition $\ugl$.
Then we prove the following main result on multiplicity one:
\begin{theorem}
\label{theorem:MultFree1}
With the above notations, for any orbit $\mcl{O}$ of $\autgp$, the permutation representation
$\mbb{C}[\mcl{O}]$ of $\autgp$ is multiplicity free.
\end{theorem}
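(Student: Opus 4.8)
The plan is to deduce Theorem~\ref{theorem:MultFree1} from the classification of the diagonal orbits on $\mcl O\times\mcl O$ via the standard Gelfand-pair criterion. Fix an orbit $\mcl O$, a base point $e = e(\mcl O)\in\mcl O$, and let $H=\Stab_{\autgp}(e)$, so that $\mbb C[\mcl O]\cong \mrm{Ind}_H^{\autgp}\mbf 1$. By Frobenius reciprocity the multiplicity of an irreducible $\autgp$-representation $\gr$ in $\mbb C[\mcl O]$ equals $\dim \gr^H$; hence $\mbb C[\mcl O]$ is multiplicity free precisely when $(\autgp,H)$ is a Gelfand pair, equivalently when the Hecke algebra $\mcl H=\mbb C[\mcl O\times\mcl O]^{\autgp}$ of $\autgp$-invariant functions on $\mcl O\times\mcl O$ (under convolution $(f*g)(x,z)=\sum_y f(x,y)g(y,z)$) is commutative. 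The algebra $\mcl H$ has a linear basis $\{A_{\mcl T}\}$ given by the indicator functions of the diagonal $\autgp$-orbits $\mcl T\subs \mcl O\times\mcl O$, and the transpose $\tau(x,y)=(y,x)$ induces an anti-automorphism of $\mcl H$ with $\tau(A_{\mcl T})=A_{\tau(\mcl T)}$. Therefore, \emph{if every diagonal orbit $\mcl T$ on $\mcl O\times\mcl O$ satisfies $\tau(\mcl T)=\mcl T$}, then $\tau$ fixes every basis element, so $f*g=\tau(f*g)=\tau(g)*\tau(f)=g*f$ for all $f,g\in\mcl H$, i.e. $\mcl H$ is commutative and the theorem follows.

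It thus suffices to show that each transitive subset of $\mcl O\times\mcl O$ is stable under transposition, and for this I would run through the cases furnished by Theorems~\ref{theorem:CanonicalSimilarPair},~\ref{theorem:GeneralOrbitPairMax},~\ref{theorem:GeneralOrbitPairNonMax}, and~\ref{theorem:GeneralOrbitPairConverse}. Theorem~\ref{theorem:GeneralOrbitPairConverse} guarantees that the orbits produced by the other three exhaust all of them, so no orbit is overlooked. For a given orbit I would take a canonical pair representative $(a,b)$ supplied by the classification and produce an automorphism $g\in\autgp$ with $g(a)=b$ and $g(b)=a$; equivalently, one checks that the complete set of invariants attached to $(a,b)$ in those theorems — the orbit type of $a$, the orbit type of $b$, and the finer data recording their relative position — is left unchanged under interchange of the two coordinates. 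Since $a$ and $b$ both lie in the single orbit $\mcl O$, the outer invariants already agree, so the content is entirely in the symmetry of the relative-position data; the ``similar pair'' normal form of Theorem~\ref{theorem:CanonicalSimilarPair} is precisely what reduces a general pair to a shape in which this symmetry can be exhibited by an explicit involutive automorphism.

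The main obstacle is this last verification: the swapping automorphism $g$ must be constructed by hand in each configuration, and the construction is where the hypothesis $q\ge 3$ enters. Concretely, realizing the interchange of the two components typically requires a unit $u\in\A^\times$ subject to a constraint such as $u\not\equiv 1$, or the solvability of a small linear system over the residue field that fails when $\mbb F_q$ is too small; for $q=2$ one can in fact exhibit orbits $\mcl T$ with $\tau(\mcl T)\ne\mcl T$, which is why the statement is restricted to $q\ge 3$. I would organize the case analysis by first disposing of the orbits in which $a,b$ sit inside a free summand (the ``max'' case of Theorem~\ref{theorem:GeneralOrbitPairMax}), where a block matrix permuting two coordinates effects the swap directly, and then handle the ``non-max'' case of Theorem~\ref{theorem:GeneralOrbitPairNonMax} by lifting the swap from a suitable quotient module and correcting the error term, using $q\ge 3$ to absorb the correction. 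Once every diagonal orbit on $\mcl O\times\mcl O$ is shown to be transposition-stable, commutativity of $\mcl H$, and hence multiplicity freeness of $\mbb C[\mcl O]$, is immediate.
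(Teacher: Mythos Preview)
Your reduction to commutativity of the Hecke algebra $\mcl H=\mbb C[\mcl O\times\mcl O]^{\autgp}$ is correct, and the transposition anti-automorphism is set up correctly. The gap is the central claim that every diagonal orbit $\mcl T$ satisfies $\tau(\mcl T)=\mcl T$: this is already false in the one-block case $\gl=(n)$ treated in the paper. There the orbit $\mcl O=\grpp^{I^*}$ with $\max I=\{(j,n)\}$ has diagonal orbits $(\grpp^{I^*})^y=\{(a,ya):a\in\grpp^{I^*}\}$ indexed by $y\in(\AAA{n-j})^*$, and one checks $\tau\big((\grpp^{I^*})^y\big)=(\grpp^{I^*})^{y^{-1}}$. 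These coincide only when $y^2=1$, so for any $y$ with $y^2\ne 1$ (which exists as soon as $n-j\ge 1$ and $q\ge 3$, or $n-j\ge 2$) the orbit is not transposition-stable and no swapping automorphism $g$ with $g(a)=ya$, $g(ya)=a$ can exist. The invariant ``$y$'' attached to the pair is simply not symmetric in the two coordinates; under swap it becomes $y^{-1}$. The same phenomenon persists in the general classification: the units $y_{i_l}$ appearing in Theorems~\ref{theorem:CanonicalSimilarPair}--\ref{theorem:GeneralOrbitPairNonMax} invert under transposition rather than staying fixed, so your proposed case analysis cannot terminate in ``$\tau(\mcl T)=\mcl T$''.

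The paper does not use a symmetric-Gelfand-pair argument at all. Instead it proves commutativity by directly equating the two convolution counts
\[
\bigl|\{z:(x,z)\in\mcl O_1,\ (z,y)\in\mcl O_2\}\bigr|
=\bigl|\{z:(x,z)\in\mcl O_2,\ (z,y)\in\mcl O_1\}\bigr|
\]
for every pair of diagonal orbits $\mcl O_1,\mcl O_2$ and every $(x,y)$. The point of the classification theorems is that each diagonal orbit factors as a product over the isotypic components, and on each component the membership condition is a congruence of the shape $b-ay\in\gp^{r}\A_{\gl_i}^{\gr_i}$ (possibly with a linear-independence side condition). The counting of $z$ then reduces, componentwise, to elementary lemmas (Lemmas~\ref{lemma:LemmaCommutativity}, \ref{lemma:countingvectors}, \ref{lemma:LinearIndependences}) showing that the number of solutions to a pair of such congruences is unchanged when the roles of $(y_1,r_1)$ and $(y_2,r_2)$ are swapped. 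The hypothesis $q\ge 3$ enters in Lemma~\ref{lemma:LinearIndependences}, where one needs enough room in $\mbb F_q^{\gr}$ to pick a vector avoiding two affine lines simultaneously; it is not used to build a swapping automorphism.
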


A more precise statement is given in Theorem~\ref{theorem:MultFree}
after introducing a combinatorial description of the orbits.

\section{Sketch of the Proof of Theorem}
Let $G$ be a finite group. For a finite $G-$set $S$ consider the permutation representation
$\mbb{C}[S]$. The endomorphism algebra $End(\mbb{C}[S])$ can be
identified as a $G-$representation algebra with the space $\mbb{C}[S
\times S]$. Under
this correspondence the endomorphism sub-algebra
$End_{G}(\mbb{C}[S]) \subs End(\mbb{C}[S])$ corresponding to the trivial
sub-representation can be identified with the subspace $\mbb{C}_G[S
\times S] \subs \mbb{C}[S \times S]$ of complex valued functions on
$S \times S$ which are constant on the transitive subsets of $S
\times S$. Hence there exists a
canonical basis $I_{\mcl{O}}$ for the endomorphism subalgebra
$End_{G}(\mbb{C}[S])$ consisting of indicator functions of these
transitive subsets $\mcl{O}$ of $S \times S$. The algebra structure
on the space $\mbb{C}[S \times S]$ arises via convolution of
functions making it a convolution algebra. To prove that the
permutation representation $\mbb{C}[S]$ is multiplicity free is
equivalent to proving that the convolution algebra $\mbb{C}_G[S
\times S]$ is commutative.(See Theorems~\ref{theorem:CanonicalSimilarPair}~\ref{theorem:GeneralOrbitPairMax}~\ref{theorem:GeneralOrbitPairNonMax}~\ref{theorem:GeneralOrbitPairConverse}). It is enough to prove that the
convolution is commutative for the basis elements. So we reduce the
commutativity question
\begin{equation*}
I_{\mcl{O}_1}*I_{\mcl{O}_2}=I_{\mcl{O}_2}*I_{\mcl{O}_1}
\end{equation*}
of convolution of basis functions $I_{\mcl{O}}$ to a counting question of cardinality of
certain sets. Then the following two sets have the same cardinality. For every
$(x,y)\in S \times S$
\begin{equation*}
\mid\{z \in S\mid (x,z) \in \mcl{O}_1 \text{ and } (z,y) \in \mcl{O}_2\}\mid
= \mid\{z \in S\mid (x,z) \in \mcl{O}_2 \text{ and } (z,y) \in
\mcl{O}_1\}\mid.
\end{equation*}

Here we introduce a definition.
\begin{defn}
\label{def:Combinatorial}
We say a set $S \subs R_{\gl}$ is a valuative combinatorial set if it can be described by valuations
which have combinatorial significance. In other words $S \in \mcl{L}$ where $\mcl{L}$ is a collection
of subsets of $R_{\gl}$ with a suitable lattice structure (additively closed for supremum).
We also say in that case $S$ has a valuative combinatorial description.
See the appendix section~\ref{sec:Appendix} for the motivation to this definition.
\end{defn}
\begin{example}
The following sets with valuative combinatorial descriptions arise naturally in our study;
the right hand sides introduce the notation for each of them that we will use throughout the paper.
\begin{itemize}
\item $(\AAA {n}) = \A_n$.
\item $\gp^j(\AAA {n})^k=\gp^j\A_n^k$.
\item $(\AAA {n})-\gp^{1}(\AAA {n})=\{u\mid u \text{ is a unit in }\AAA
{n}\}$ by $\A_n^{*}$.
\item $\gp^j(\AAA {n})-\gp^{j+1}(\AAA {n})
=\{\gp^ju\mid u \text{ is a unit in }\\
\AAA {n}\}=\gp^j\{u\mid u \text{ is a unit in }\AAA {n}\}$ by $\gp^j\A_n^{*}$.
\item The set
$(\AAA{n})^k-\gp^{1}(\AAA{n})^{k}$ by $\A_n^{k*}$
\item The set
$\gp^j(\AAA{n})^k-\gp^{j+1}(\AAA{n})^{k}=\gp^j\big((\AAA{n})^k-\gp^{1}(\AAA{n})^{k}\big)$
by $\gp^j\A_n^{k*}$.
\end{itemize}
\end{example}

We will crucially use the following result which was proved in~\cite{AnilAmri}, Theorem $27$.

Let $\ugl$ be a partition and $\A_{\gl}$ the corresponding finite $R-$module.
For any orbit $\mcl{O} \subs \A_{\gl}$, and an element $e$ in $\mcl{O}$, consider its stabilizer $(\autgp)_e$.
Then, we have:

\begin{theorem}
\label{theorem:sub-orbit1}
For any $l \in \grpp$ the $(\autgp)_e-$orbit of $l$ is isomorphic (automorphisms and translations are allowed) to a set which has a valuative
combinatorial description for a suitable partition decomposition of $\gl$.
\end{theorem}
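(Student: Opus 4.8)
\emph{Sketch of proof.} The plan is to reduce to a normalized $e$, then to decompose $\grpp$ in a way dictated by $e$ and to compute the $(\autgp)_{e}$-orbit of $l$ one block of that decomposition at a time.

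\emph{Step 1 (normalizing $e$).} For $g\in\autgp$, conjugation identifies $(\autgp)_{e}$ with $(\autgp)_{g(e)}$ and carries the $(\autgp)_{e}$-orbit of $l$ isomorphically onto the $(\autgp)_{g(e)}$-orbit of $g(l)$, so we may assume that $e$ is the canonical representative of its $\autgp$-orbit. By the classical description of the $\autgp$-orbits on $\grpp$ (Miller~\cite{1905}, Birkhoff~\cite{GarrettBirkhoff01011935}, Dutta--Prasad~\cite{MR2793603}), such an $e$ is a sum $e=\sum_{j}\pi^{a_{j}}c_{j}$, where the $c_{j}$ generate distinct cyclic direct summands $C_{j}\cong\A_{b_{j}}$ of a fixed decomposition of $\grpp$ and the pairs $(a_{j},b_{j})$ record the jumps of the height sequence of $e$. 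Collecting the summands occurring here into a submodule $P$ and the remaining ones into $Q$ gives $\grpp=P\oplus Q$ with $e\in P$; the induced splitting $\gl=\gl^{P}\sqcup\gl^{Q}$, further refined by separating the individual cyclic layers of $e$ inside $P$, is the ``suitable partition decomposition of $\gl$'' in the statement.

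\emph{Step 2 (stabilizer and orbit).} Writing automorphisms of $\grpp$ in block form relative to $\grpp=P\oplus Q$, one checks that $(\autgp)_{e}$ is generated by four families: the stabilizer of $e$ in $\mathrm{Aut}(P)$; all of $\mathrm{Aut}(Q)$; the maps $\id{\grpp}+\phi$ with $\phi\in\Hom(Q,P)$; and the maps $\id{\grpp}+\psi$ with $\psi\in\Hom(P,Q)$ and $\psi(e)=0$. For $l=l_{P}+l_{Q}$ these act as follows: $\Hom(Q,P)$ translates $l_{P}$ through the submodule $\{\phi(l_{Q}):\phi\in\Hom(Q,P)\}$, a direct sum of sets of the shape $\pi^{s}\A_{n}^{k}$ read off from the height sequence of $l_{Q}$; the constrained family $\{\psi:\psi(e)=0\}$ translates $l_{Q}$ through a coset of a valuative combinatorial submodule, the constraint $\psi(e)=0$ only discarding those $\psi$ that would disturb the part of $l_{P}$ lying in the cyclic layers generated by $e$; $\mathrm{Aut}(Q)$ then moves $l_{Q}$ inside its ordinary automorphism orbit, which is valuative combinatorial by the $n=1$ theory; and the stabilizer of $e$ in $\mathrm{Aut}(P)$ moves $l_{P}$ inside a stabilizer orbit in the strictly smaller module $P$ --- or, if $e$ meets every summand, one peels off one cyclic layer of $e$ at a time --- which is dealt with by induction, the ultimate base case $P=\A_{b}$, $e=\pi^{a}$ being the direct computation $\pi^{c}v\,(1+\pi^{b-a}\A_{b})=\pi^{c}v+\pi^{c+b-a}\A_{b}$.

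\emph{Step 3 (assembly and obstacle).} Composing the four families exhibits the $(\autgp)_{e}$-orbit of $l$ as the image, under sums, translations and automorphisms, of a product of valuative combinatorial sets; since the lattice $\mcl{L}$ of Definition~\ref{def:Combinatorial} is closed under exactly these operations, the orbit is isomorphic (up to automorphisms of $\grpp$ and translations) to a member of $\mcl{L}$ for the partition decomposition of Step~1, which is the claim. The main obstacle is Step~2: the four families do not act independently on $(l_{P},l_{Q})$ --- using $\Hom(Q,P)$ to normalize $l_{P}$ changes which $\psi$ with $\psi(e)=0$ are still useful, and conversely --- so the reductions must be performed in a carefully chosen order, keeping a running record of how the set of available homomorphisms shrinks once $l_{P}$ has been put into a normal form relative to $e$. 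One must also verify that every intermediate set stays inside $\mcl{L}$, in particular that the coset forced by the condition $\psi(e)=0$ does not escape the valuative combinatorial family; this is where the precise normal form of $e$ is used, and where one checks that a small residue field causes no collapse (the hypothesis $q\ge 3$ removing the $\mbb{F}_{2}$ degeneracies should they intervene here).
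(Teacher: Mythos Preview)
The paper does not supply its own proof of this theorem; it is quoted from~\cite{AnilAmri}, Theorem~27, and the surrounding preliminaries (Theorems~\ref{lemma:stab}, \ref{lemma:nilpotent}, \ref{theorem:stabilizer}, \ref{theorem:sub-orbit}) only summarise the outcome. Your decomposition $\grpp=P\oplus Q$ and the four-block description of $(\autgp)_{e}$ match that structure exactly: your $P,Q$ are the paper's $\grppp,\grpppp$, and your four generating families are the four entries of the matrix in Theorem~\ref{theorem:stabilizer}. So the architecture of your sketch is the right one.

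Where your sketch diverges from the cited argument is in the treatment of the $P$-block. You propose to handle the stabilizer of $e$ in $\mathrm{Aut}(P)$ by induction, peeling off one cyclic layer of $e$ at a time. The reference does not do this, and your induction is not obviously well-founded: by construction $e(I)'$ already has a nonzero coordinate in \emph{every} cyclic summand of $\grppp$, so there is no strictly smaller instance to recurse to, and you have not explained how the single-cyclic base case glues back together when the summands interact through the off-diagonal blocks of $\mathrm{End}_{\A}(\grppp)$. The actual mechanism is Theorem~\ref{lemma:nilpotent}: because the coordinates of $e(I)'$ form an antichain in $\mcl{P}$, any $n\in\mathrm{End}_{\A}(\grppp)$ with $n(e(I)')=0$ is nilpotent, so the stabilizer in $G_{\gl'}$ is simply $\id+N^{\gl'}$ (Theorem~\ref{lemma:stab}) and one computes $(\id+n)(m')$ directly, obtaining the coset $m'+\grppp^{I(\bar m')}$ governed by the image $\bar m'$ of $m'$ in $\grppp/\A e(I)'$. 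This antichain/nilpotency observation is the missing idea that replaces your induction.

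Two smaller points. First, you invoke the hypothesis $q\ge 3$ in Step~3, but the stabilizer-orbit description holds over any residue field; the restriction on $q$ enters only later, in the commutativity counts (Lemma~\ref{lemma:LinearIndependences}). Second, the precise answer recorded in Theorem~\ref{theorem:sub-orbit} is that the $\grppp$-component of the orbit is $m'+\grppp^{I(\bar m')\cup I(m'')}$ and the $\grpppp$-component is $\grpppp^{I(m'')^{*}}+\grpppp^{I(\bar m')}$: the two blocks are coupled through the ideals $I(\bar m')$ and $I(m'')$, not an unconstrained product, and your Step~3 assembly does not yet establish this coupling or verify that the resulting set lies in $\mcl{L}$. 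The ``careful ordering'' obstacle you flag is real only because your four families are not quite the right generators; once the nilpotent description of the $P$-stabilizer is in hand, the orbit falls out in one step without any ordering issues.
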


See~\cite{AnilAmri}, Theorem $27$ for a more precise version using
combinatorial description of the orbits.

For a partition $\gl$ and an orbit $\mcl{O}$ in $\grpp$, consider the stabilizer
$(\autgp)_e$ of an element $e$ of $\mcl{O}$. Suppose $\mcl{O}_1 \subs \mcl{O}$ is an orbit under this stabilizer
subgroup. We know that there is a corresponding orbit $\ti{\mcl{O}} \in \mcl{O}\times \mcl{O}$ for the diagonal action of $G_{\lambda}$.
We note that $\ti{\mcl{O}}$ can also be obtained via canonical form $e(\mcl{O})$ instead of an arbitrary element
$e \in \mcl{O}$. Then, we prove the theorem:

\begin{theorem}
\label{theorem:sub-orbit2}
With the above notations, $\ti{\mcl{O}}$ is a set which has a
combinatorial description with some additional properties such as being valuative or having linearly
independent conditions and described componentwise with respect to
the isotypic parts of $\grpp$.
\end{theorem}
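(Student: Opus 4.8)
\medskip
\noindent\textbf{Proof plan.}
The strategy is to read off the description of the pair-orbit $\ti{\mcl{O}}\sbq\mcl{O}\times\mcl{O}$ from the description of the stabilizer sub-orbit $\mcl{O}_1\sbq\mcl{O}$ furnished by Theorem~\ref{theorem:sub-orbit1}, using equivariance to pass from ``second coordinate over the base point'' to ``second coordinate over an arbitrary first coordinate''. First I would recall the orbit--orbit correspondence: for the transitive $\autgp$-set $\mcl{O}$ and a base point $e\in\mcl{O}$, the map sending a $(\autgp)_e$-orbit $\mcl{O}_1\ni l$ to the $\autgp$-orbit $\ti{\mcl{O}}:=\autgp\cdot(e,l)$ is a bijection onto the set of $\autgp$-orbits of $\mcl{O}\times\mcl{O}$, the fibre $\{\,y\in\mcl{O}:(e,y)\in\ti{\mcl{O}}\,\}$ is exactly $\mcl{O}_1$, and over a general first coordinate $x=g\cdot e$ the fibre is the translate $g\cdot\mcl{O}_1$, which is well defined because $\mcl{O}_1$ is $(\autgp)_e$-stable. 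Since the correspondence does not depend on the base point, I take $e=e(\mcl{O})$ to be the canonical form of $\mcl{O}$; this is the reduction already noted before the statement.

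Next I would apply Theorem~\ref{theorem:sub-orbit1} with $e=e(\mcl{O})$: after a suitable partition decomposition of $\gl$ and a suitable composite of an automorphism of $\grpp$ with a translation, $\mcl{O}_1=(\autgp)_{e(\mcl{O})}\cdot l$ becomes a valuative combinatorial set $S_1\in\mcl{L}$, i.e.\ it is cut out by finitely many conditions of the form ``the valuation of such-and-such projection of $y$ equals (or is at least) a prescribed constant''. The crucial structural input is that a canonical form $e(\mcl{O})$ has its nonzero coordinates concentrated in prescribed single cyclic summands $\AAA{\gl_i}$ of the isotypic blocks $(\AAA{\gl_i})^{\oplus\gr_i}$, so that each condition defining $S_1$ involves, besides $y$, only the coordinates of $e(\mcl{O})$ lying in \emph{one} isotypic block. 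Replacing $e(\mcl{O})$ by $x=g\cdot e(\mcl{O})$ and invoking $\autgp$-equivariance, every such condition turns into a condition on the pair $(x,y)$ which is constant on $\autgp$-orbits of $\mcl{O}\times\mcl{O}$ and still involves only one isotypic block at a time: either a valuation (in)equality among projections of $x$ and $y$ into that block (the valuative case), or the non-vanishing of an explicit linear form in those projections together with its independence from the conditions already imposed (the linearly independent case). Adjoining the two separate conditions $x\in\mcl{O}$ and $y\in\mcl{O}$, which are themselves valuative in $x$ and in $y$ respectively, exhibits $\ti{\mcl{O}}$ as the common solution set of a list of conditions each of which lives in a single isotypic part of $\grpp$; this is precisely the componentwise combinatorial description asserted, with the stated additional properties.

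The hard part will be the equivariant transport in the previous paragraph: verifying that the valuative data attached in Theorem~\ref{theorem:sub-orbit1} to the \emph{specific} coordinates of $e(\mcl{O})$ really descends to a well-defined $\autgp$-invariant function on $\mcl{O}\times\mcl{O}$, and that after this descent no condition secretly couples two distinct isotypic parts. This is exactly the point at which the explicit shape of the canonical forms $e(\mcl{O})$ and their stabilizers must be used, and it splits into cases according to whether the cyclic summand carrying the relevant coordinate is maximal among $\gl_1>\gl_2>\dotsc>\gl_k$ or not, and according to the similarity type of the pair. These cases, together with the choice of the correct partition refinement and the verification of linear independence of the cross-block conditions, are what Theorems~\ref{theorem:CanonicalSimilarPair}, \ref{theorem:GeneralOrbitPairMax}, \ref{theorem:GeneralOrbitPairNonMax} and \ref{theorem:GeneralOrbitPairConverse} carry out, and they account for essentially all of the remaining work.
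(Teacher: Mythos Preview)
Your orbit--orbit correspondence and the reduction to the canonical base point $e=e(I)$ are correct and match the paper (Observation~\ref{obs:Bijection}), and you are right that the substance lies in Theorems~\ref{theorem:CanonicalSimilarPair}--\ref{theorem:GeneralOrbitPairConverse}. But the route you propose through Theorem~\ref{theorem:sub-orbit1} has a gap: you assert that because $e(I)$ has at most one nonzero coordinate per isotypic block, each valuative condition cutting out the stabilizer orbit $\mcl{O}_1$ already lives in a single isotypic block, so that equivariant transport directly yields a blockwise description of $\ti{\mcl{O}}$. The paper explicitly warns against this, immediately after the statement: ``this componentwise description with respect to the isotypic parts of $\grpp$ is usually not present in description of stabilizer orbits in Theorem~\ref{theorem:sub-orbit1} because of partition decomposition of $\gl$.'' Concretely, the stabilizer orbit is described (Theorem~\ref{theorem:sub-orbit}) via the splitting $\gl=\gl'\oplus\gl''$ and conditions such as $l'\in m'+\grppp^{I(\bar m')\cup I(m'')}$, in which the ideal $I(m'')$ mixes all the isotypic blocks appearing in $\gl''$; there is no isotypic-componentwise structure on $\mcl{O}_1$ to transport, so your ``equivariant transport preserves blockwise form'' step does not go through as written.

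The paper's argument is not a transport of the description in Theorem~\ref{theorem:sub-orbit1}. It uses Theorem~\ref{theorem:sub-orbit} only to select, in each stabilizer orbit $\mcl{O}_{m',J,K}$, a single representative $f$ whose $\gl'$-coordinates all have valuation exactly $s_{i_l}$ and whose $\gl''$-part is in characteristic form (Theorem~\ref{theorem:CanonicalSimilarPair}). It then writes an arbitrary $g\in\autgp$ as a block matrix (equation~(\ref{eq:aut})) and computes $(\ul a,\ul b)=(ge(I),gf)$ block-row by block-row (equations~(\ref{eq:A})--(\ref{eq:E})). The isotypic-componentwise description emerges at this stage, not earlier: the combination $b_{i_l}-y_{i_l}a_{i_l}$ depends only on the $i_l$-th block row of $g$, and its valuation and residue are controlled by the fixed scalar $m_{i_l}$ assembled from the data $s_j,r_j,val(y_j-y_{i_l})$ of the chosen representative; this is what produces the dichotomy between a pure valuation bound and a linear-independence condition in $\mbb{F}_q^{\gr_{i_l}}$ (Theorems~\ref{theorem:GeneralOrbitPairMax} and~\ref{theorem:GeneralOrbitPairNonMax}). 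The converse (Theorem~\ref{theorem:GeneralOrbitPairConverse}) is then proved by constructing $g$ one block row at a time. In short, the componentwise structure is obtained by direct matrix computation from a carefully normalized representative of the pair, not by pushing forward an already-blockwise stabilizer description.
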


This theorem is restated in Theorems~\ref{theorem:CanonicalSimilarPair}~\ref{theorem:GeneralOrbitPairMax}
~\ref{theorem:GeneralOrbitPairNonMax}~\ref{theorem:GeneralOrbitPairConverse} more precisely.
This componentwise description with respect to the
isotypic parts of $\grpp$ is usually not present in description of
stabilizer orbits in Theorem~\ref{theorem:sub-orbit1} because of partition decomposition of $\gl$.
The componentwise description in Theorem~\ref{theorem:sub-orbit2} is
useful to ease the proof of commutativity as described in the next
subsection.

\subsubsection{Strategy to prove Commutativity of Convolution by Counting}
~\\
Once we have the description of a transitive set in an orbit of pairs $\mcl{O}
\times \mcl{O}$ for the diagonal action of $\autgp$ we prove
commutativity of the convolution operation in the endomorphism
algebra $\mbb{C}_{\autgp}[\mcl{O} \times \mcl{O}]$.

We wish to show that for two transitive sets $\mcl{O}_1,\mcl{O}_2$ of $\mcl{O} \times \mcl{O}$ that
\begin{equation*}
\mid\{z \in S\mid (x,z) \in \mcl{O}_1 \text{ and } (z,y) \in \mcl{O}_2\}\mid
= \mid\{z \in S\mid (x,z) \in \mcl{O}_2 \text{ and } (z,y) \in
\mcl{O}_1\}\mid.
\end{equation*}

For the purpose of counting we use a general section of the residue
field into the quotients of the discrete valuation ring which will
just enable us to count solutions to the congruence equations modulo
a power of the uniformizer. We do not require additional properties
of the section. This counting is done isotypic componentwise.

The Lemmas~\ref{lemma:LemmaCommutativity},~\ref{lemma:countingvectors},~\ref{lemma:LinearIndependences}
prove the counting results needed for commutativity.
Then Theorem~\ref{theorem:Commutativity} establishes commutativity.

The following section~\ref{sec:preliminaries} describes some
preliminary results essential to state the main results of
this paper.
\section{\bf{Preliminaries}}
\label{sec:preliminaries}
\subsection{\bf{Fundamental Poset and Characteristic Submodules}}
\label{sec:orbits-elements}

An elaborate description of the results in this section is given in
Dutta and Prasad~\cite{MR2793603}.

For any module $\grpp$ of the form given in
equation~(\ref{eq:module-form}), the $\autgp$\nobreakdash-orbits in
$\grpp$ are in bijective correspondence with a certain class of
ideals in a poset $\mcl{P}$, which we call the fundamental poset. As
a set,
\begin{equation*}
  \mcl{P} = \{(v,k)\mid k\in \mbb{N},\; 0\leq v<k\}.
\end{equation*}
The partial order on $\mcl{P}$ is defined by setting
\begin{equation*}
  (v,l) \leq (v',l') \text{ if and only if } v \geq v' \text{ and } l-v \leq l'-v'.
\end{equation*}

Let $J(\mcl{P})$ denote the lattice of order ideals in $\mcl{P}$. A
typical element of $\grpp$ from equation~(\ref{eq:module-form}) is a
vector of the form
\begin{equation*}
  e = (e_{\gl_i,t_i}),
\end{equation*}
where $i$ runs over the set $\{1,\dotsc,k\}$, and for each $i$,
$t_i$ runs over the set $\{1,\dotsc,\gr_i\}$. Let
$val_{\gp}(e_{\gl_i,t_i})$ be the unique integer $j$ such that
$e_{\gl_i,t_i} = u\gp^j$ for some unit $u$ in $\AAA {\gl_i}$. In
particular, $val_{\gp}(0) = \infty$. To $e\in \grpp$ associate the
order ideal $I(e)\subset \mcl{P}$ generated by the elements
\begin{equation*}
  (\us{t_i \in \{1,\dotsc,\gr_i\}}{\min} \ val_{\gp}(e_{\gl_i,t_i}), \gl_i)
\end{equation*}
for all $t_i$ and for all $i$ such that the coordinate
$e_{\gl_i,t_i}\neq 0$ in $\AAA {\gl_i}$. The order ideal
$I(0)$ is the empty ideal.

Consider for example, in the finite abelian $p$-group (or $\Z_p$-module):
\begin{equation}
  \label{eq:fapg-eg}
  \grpp = \ZZ {5}\oplus\ZZ {4}\oplus\ZZ {4}\oplus\ZZ {2}\oplus\ZZ {1}.
\end{equation}
the order ideal $I(e)$ of $e = (0,up,p^2,vp,1)$, where $u$ and $v$
are coprime to $p$. It is the ideal generated by
$\{(1,4),(1,2),(0,1)\}$ in the fundamental poset.

A key observation of Dutta and Prasad~\cite{MR2793603} is the following theorem:
\begin{theorem}
  \label{theorem:ideals-homomorphisms}
  Let $\grpp$ and $R_{\gm}$ be two finite $\A$-modules.
  An element $f \in R_{\gm}$ is a homomorphic image of $e \in \grpp$
  (in other words, there exists a homomorphism $\phi:\grpp\to R_{\gm}$
  such that $\phi(e)=f$) if and only if $I(e)\sups I(f)$.
\end{theorem}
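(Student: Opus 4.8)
The plan is to reduce the statement to the elementary structure of homomorphisms between cyclic modules $A/\pi^{a}A\to A/\pi^{b}A$ and then to assemble a homomorphism (in one direction) or read off constraints from one (in the other direction) block by block over the isotypic decompositions of $R_{\lambda}$ and $R_{\mu}$. The one input I would isolate first is the cyclic case: for positive integers $a,b$, integers $0\le v<a$ and $0\le w<b$, and units $u,u'$, there is an $A$-linear map $A/\pi^{a}A\to A/\pi^{b}A$ sending $u\pi^{v}$ to $u'\pi^{w}$ if and only if $(w,b)\le(v,a)$ in $\mathcal{P}$. This is immediate: every such map is multiplication by some $c$ with $\mathrm{val}_{\pi}(c)\ge\max(b-a,0)$, and solving $u\pi^{v}c=u'\pi^{w}$ forces $c=u^{-1}u'\pi^{w-v}$, which is defined and admissible exactly when $w\ge v$ and $b-w\le a-v$ — precisely the two inequalities defining $(w,b)\le(v,a)$.

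For the forward implication, suppose $\phi\colon R_{\lambda}\to R_{\mu}$ satisfies $\phi(e)=f$. Since $I(f)$ is the order ideal generated by the points $(w,\mu_{j})$, where $w=\min_{s}\mathrm{val}_{\pi}(f_{\mu_{j},s})$ runs over blocks in which $f$ is nonzero, it suffices to dominate each such point in $\mathcal{P}$ by a generator of $I(e)$. Fix such a $j$, pick a coordinate $f_{\mu_{j},s_{0}}$ realizing the valuation $w$, and expand it along the components $\phi^{(j,s_{0})}_{(i,t)}\colon A/\pi^{\lambda_{i}}A\to A/\pi^{\mu_{j}}A$ of $\phi$. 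Because a sum has valuation at least the minimum of the valuations of its terms, some term $\phi^{(j,s_{0})}_{(i,t)}(e_{\lambda_{i},t})$ has valuation $\le w<\mu_{j}$; in particular $e_{\lambda_{i},t}\neq 0$ (a zero coordinate contributes $0$), and since every component raises $\pi$-valuation by at least $\max(\mu_{j}-\lambda_{i},0)$ we get $\mathrm{val}_{\pi}(e_{\lambda_{i},t})+\max(\mu_{j}-\lambda_{i},0)\le w$. Writing $v_{i}=\min_{t'}\mathrm{val}_{\pi}(e_{\lambda_{i},t'})\le\mathrm{val}_{\pi}(e_{\lambda_{i},t})$, the pair $(v_{i},\lambda_{i})$ is a generator of $I(e)$, and the inequality yields both $w\ge v_{i}$ and $\mu_{j}-w\le\lambda_{i}-v_{i}$, that is, $(w,\mu_{j})\le(v_{i},\lambda_{i})$.

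For the reverse implication, suppose $I(f)\subseteq I(e)$. For each block $\mu_{j}$ in which $f$ is nonzero, its generator $(w_{j},\mu_{j})$ of $I(f)$ lies in $I(e)$, so it is dominated by some generator $(v_{i(j)},\lambda_{i(j)})$ of $I(e)$; fix in each relevant block $\lambda_{i}$ a coordinate $\tau_{i}$ of $e$ attaining the minimal valuation $v_{i}$. I then define $\phi$ one domain-summand at a time: on the summand indexed by $(\lambda_{i(j)},\tau_{i(j)})$ I invoke the cyclic lemma coordinatewise within block $\mu_{j}$, noting that each target $f_{\mu_{j},s}$ has valuation $\ge w_{j}\ge v_{i(j)}$ and satisfies $\mu_{j}-\mathrm{val}_{\pi}(f_{\mu_{j},s})\le\mu_{j}-w_{j}\le\lambda_{i(j)}-v_{i(j)}$, so the required component $A/\pi^{\lambda_{i(j)}}A\to A/\pi^{\mu_{j}}A$ exists; I set $\phi$ equal to zero on every other summand of $R_{\lambda}$. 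A direct check of coordinates then gives $\phi(e)=f$.

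The only real subtlety lies in the bookkeeping of the reverse direction: several blocks $\mu_{j}$ of $f$ may be forced onto the same coordinate of $e$ when the indices $i(j)$ coincide, so the component maps out of a single domain summand must be prescribed simultaneously and checked to be compatible, and one must verify that coordinates of $e$ whose valuation strictly exceeds the block minimum $v_{i}$, together with all blocks of $\lambda$ not contributing to $I(e)$, add nothing to $\phi(e)$. Both points are handled by the same lower bound $\mathrm{val}_{\pi}(f_{\mu_{j},s})\ge w_{j}\ge v_{i(j)}$ furnished by the domination relation, so I do not expect any genuine obstacle beyond organizing these indices carefully.
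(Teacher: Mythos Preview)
The paper does not give its own proof of this theorem: it is stated in the preliminaries as a result of Dutta and Prasad and simply cited. So there is nothing in the paper to compare against.

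Your proposal is correct and is essentially the standard direct argument. The cyclic lemma you isolate is exactly the right primitive, and both implications go through as written. The bookkeeping worry you raise at the end is not a genuine obstacle: the map out of a single source summand $(\lambda_{i(j)},\tau_{i(j)})$ into $R_{\mu}$ is specified coordinate by coordinate, so collisions $i(j_{1})=i(j_{2})$ merely mean that one summand maps nontrivially into several target blocks, which causes no conflict; and on every other summand of $R_{\lambda}$ you have declared $\phi$ to be zero, so those summands contribute nothing to $\phi(e)$ irrespective of the valuations of the corresponding entries of $e$.
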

For any two elements $e,f \in \grpp$ the existence of two
homomorphisms, one which takes $e$ to $f$ and the other which takes
$f$ to $e$ makes $e$ and $f$ automorphic to each other i.e. they lie
in the same $\autgp$\nobreakdash-orbit. The following establishes this partial order on the $\autgp$\nobreakdash-orbits of
elements of an abelian group.
\begin{theorem}
  \label{theorem:ideals-and-orbits}
  If $I(e)=I(e')$ for any $e,e'\in \grpp$, then $e$ and $e'$ lie in the same $\autgp$\nobreakdash-orbit.
\end{theorem}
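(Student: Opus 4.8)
The plan is to deduce the theorem from Theorem~\ref{theorem:ideals-homomorphisms} together with the classical fact that, over a module of finite length, two elements each of which is a homomorphic image of the other are automorphic. First I would apply Theorem~\ref{theorem:ideals-homomorphisms} twice, taking the target module to be $\grpp$ itself: from $I(e)\sups I(e')$ I obtain $\phi\in\End(\grpp)$ with $\phi(e)=e'$, and from $I(e')\sups I(e)$ I obtain $\psi\in\End(\grpp)$ with $\psi(e')=e$. (If $e'=0$ then $I(e)=I(e')=\es$, which forces $e=0=e'$ and there is nothing to prove, so I may assume $e,e'\neq 0$.) It then remains to upgrade this pair of endomorphisms to a single \emph{automorphism} $g\in\autgp$ with $g(e)=e'$.

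For the upgrade I would run the standard Fitting-type argument. Put $\gth=\psi\phi$ and $\get=\phi\psi$, so $\gth(e)=e$ and $\get(e')=e'$. Since $\grpp$ is finite, $\End(\grpp)$ is a finite monoid under composition, hence for $N$ large the maps $\gth^N$ and $\get^N$ are idempotents; write $A=\operatorname{im}(\gth^N)$, $B=\ker(\gth^N)$, $A'=\operatorname{im}(\get^N)$, $B'=\ker(\get^N)$. By Fitting's lemma $\grpp=A\oplus B=A'\oplus B'$, with $\gth$ an automorphism of $A$ and $\get$ an automorphism of $A'$. From $\gth^N(e)=e$ we get $e\in A$, and likewise $e'\in A'$. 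Using the identities $\phi\gth=\get\phi$ and $\psi\get=\gth\psi$ one checks $\phi(A)\sbq A'$ and $\psi(A')\sbq A$; combined with the invertibility of $\psi\phi=\gth$ on $A$ and of $\phi\psi=\get$ on $A'$, this shows that the restriction $\phi|_A\colon A\to A'$ is an isomorphism, and it carries $e$ to $e'$.

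It then remains to glue. Because $A\cong A'$ and both are direct summands of $\grpp$, the Krull--Schmidt theorem (valid since $\grpp$ has finite length) gives $B\cong B'$. Choosing any isomorphism $h\colon B\to B'$ and setting $g:=h\oplus\phi|_A$ with respect to the decompositions $\grpp=B\oplus A=B'\oplus A'$ yields a bijective endomorphism of $\grpp$, that is, an element $g\in\autgp$, with $g(e)=\phi(e)=e'$. Hence $e$ and $e'$ lie in the same $\autgp$\nobreakdash-orbit.

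The step I expect to be the real obstacle is precisely this passage from two endomorphisms to a single automorphism: the Fitting decomposition is what isolates the summand on which $\phi$ is already invertible (and which contains $e$), and Krull--Schmidt cancellation is what lets one match up the complementary summands $B$ and $B'$. An alternative route avoiding these tools would be to argue directly with the normal form~(\ref{eq:module-form}) of $\grpp$ and the minimal-valuation generators of $I(e)$, in the style of Birkhoff and of Dutta--Prasad; the homomorphism criterion of Theorem~\ref{theorem:ideals-homomorphisms} is exactly what packages that bookkeeping away here.
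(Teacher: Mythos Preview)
Your proposal is correct and follows exactly the approach the paper indicates: the sentence immediately preceding the theorem states that the existence of homomorphisms in both directions (supplied by Theorem~\ref{theorem:ideals-homomorphisms}) makes $e$ and $e'$ automorphic, and the paper leaves that implication as a known fact from Dutta--Prasad~\cite{MR2793603}. Your Fitting/Krull--Schmidt argument is a clean and correct way to fill in precisely that step, which the paper itself does not spell out.
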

For each order ideal $I\subset \mcl{P}$, let $\max(I)$ denote its
set of maximal elements. Let $J(\mcl{P})_{\gl}$ denote the
sublattice of $J(\mcl{P})$ consisting of ideals such that $\max(I)$
is contained in the set
\begin{equation*}
  \mcl{P}_\gl = \{(v,l) \in \mcl{P} \mid l=\gl_i \text{ for some } 1\leq i \leq k\}.
\end{equation*}
Then the $\autgp$\nobreakdash-orbits in $\grpp$ are in bijective
correspondence with the order ideals in $J(\mcl{P})_{\gl}$. For each
order ideal $I\subs \mcl{P}$, we use the notation $\grpp^{I^*}$ for
the orbit corresponding to $I$ and
\begin{equation*}
  \grpp^{I^*} = \{e\in \grpp\mid I(e)=I\}
\end{equation*}
A convenient way to think about ideals in $\mcl{P}$ is in terms of what
we call their boundaries: for each positive integer $k$ define the boundary valuation of $I$ at
$l$ to be
\begin{equation}
\label{eq:Boundary}
  \partial_l I = \min\{v\mid (v,l)\in I\}.
\end{equation}
We denote the sequence $\{\partial_l I\}$ of boundary valuations by $\partial I$ and call it the boundary of $I$.
This is indeed the boundary of the region corresponding to the ideal $I$ in the fundamental poset.

The ideal $I$ is completely determined by $\max(I)$: in fact taking
$I$ to $\max(I)$ gives a bijection from the lattice
$J(\mcl{P})_{\gl}$ to the set of antichains in $\mcl{P}_\gl$.

\begin{theorem}
  \label{theorem:description-of-orbits}
  The orbits $\grpp^{I^*}$ consists of elements $e=(e_{\gl_i,t_i})$
  such that $val_{\gp}(e_{\gl_i,t_i})$\\ $\geq \partial_{\gl_i}I$ for all $\gl_i$ and $t_i$, and such that $val_{\gp}(e_{\gl_i,t_i}) = \partial_{\gl_i}I$ for at least one $t_i$ if $(\partial_{\gl_i} I, \gl_i)\in \max(I)$.
\end{theorem}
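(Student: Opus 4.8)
The plan is to reduce the statement to the identity $\grpp^{I^*}=\{e\in\grpp\mid I(e)=I\}$ recorded just above, and then to translate the equation $I(e)=I$ into the asserted conditions on the $\gp$-adic valuations of the coordinates $e_{\gl_i,t_i}$. I will write $m_i=\us{t_i}{\min}\ val_{\gp}(e_{\gl_i,t_i})$, so that by definition $I(e)$ is the order ideal generated by the elements $(m_i,\gl_i)$ as $i$ ranges over the indices with nonzero $i$-th isotypic block; and I adopt the convention $\partial_{\gl_i}I=\gl_i$ when no element $(v,\gl_i)$ lies in $I$, so that ``$val_{\gp}(e_{\gl_i,t_i})\geq\partial_{\gl_i}I$'' then says precisely that block $i$ vanishes. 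Two elementary facts about the order on $\mcl{P}$ drive the argument: at a fixed level $l$ one has $(v,l)\leq(v',l)$ iff $v\geq v'$, so $(\partial_{\gl_i}I,\gl_i)$ dominates every element of $I$ at level $\gl_i$; and since the $\gl_i$ are pairwise distinct, two comparable elements of $\mcl{P}$ at distinct levels are never equal, hence a maximal element of $I$ cannot lie strictly below any other element of $I$, in particular not below any generator of $I(e)$ at a different level.

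For the inclusion $\{e\mid I(e)=I\}\sbq$ (the set cut out by the valuation conditions), assume $I(e)=I$. For each index $i$ with nonzero block, $(m_i,\gl_i)\in I(e)=I$ forces $m_i\geq\partial_{\gl_i}I$, whence $val_{\gp}(e_{\gl_i,t_i})\geq m_i\geq\partial_{\gl_i}I$ for all $t_i$; for $i$ with zero block the inequality $val_{\gp}(e_{\gl_i,t_i})=\infty\geq\partial_{\gl_i}I$ is automatic. Moreover, if $(\partial_{\gl_i}I,\gl_i)\in\max(I)$ then block $i$ must be nonzero, since otherwise that element would lie below one of the generators $(m_j,\gl_j)$ of $I=I(e)$ with $j\neq i$, contradicting maximality; and then, comparing $(\partial_{\gl_i}I,\gl_i)$ with the generator dominating it, maximality together with the distinctness of levels forces that generator to be $(m_i,\gl_i)$ itself, so $m_i=\partial_{\gl_i}I$ and some coordinate attains valuation exactly $\partial_{\gl_i}I$.

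For the reverse inclusion, assume $e$ satisfies the valuation conditions. Each generator $(m_i,\gl_i)$ of $I(e)$ has $m_i\geq\partial_{\gl_i}I$, and since $m_i<\gl_i$ (block $i$ is nonzero) this forces $\partial_{\gl_i}I<\gl_i$, so level $\gl_i$ really occurs in $I$ and $(\partial_{\gl_i}I,\gl_i)\in I$; then $(m_i,\gl_i)\leq(\partial_{\gl_i}I,\gl_i)$ and the order-ideal property give $(m_i,\gl_i)\in I$, so $I(e)\sbq I$. For $I\sbq I(e)$ it suffices to show $\max(I)\sbq I(e)$: given $(v,l)\in\max(I)$, we have $l=\gl_i$ for some $i$ because $I\in J(\mcl{P})_{\gl}$, and maximality together with the domination remark forces $v=\partial_{\gl_i}I$; the ``at least one $t_i$'' clause now applies (precisely because $(\partial_{\gl_i}I,\gl_i)\in\max(I)$) and yields $m_i\leq\partial_{\gl_i}I$, hence $m_i=\partial_{\gl_i}I=v$, so $(v,l)=(m_i,\gl_i)$ is literally a generator of $I(e)$. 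As $\max(I)$ generates $I$ and $I(e)$ is downward closed, $I\sbq I(e)$, so $I=I(e)$; combining the two inclusions with $\grpp^{I^*}=\{e\mid I(e)=I\}$ finishes the proof.

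The only genuinely delicate part is the bookkeeping around levels $\gl_i$ that are absent from $I$, or present in $I$ but not as maximal elements: one must check that the convention $\partial_{\gl_i}I=\gl_i$ correctly encodes ``block $i$ is forced to vanish'', and that when $(\partial_{\gl_i}I,\gl_i)\in I\setminus\max(I)$ the minimal valuation $m_i$ is genuinely free to exceed $\partial_{\gl_i}I$ without disturbing $I(e)=I$. Both points reduce to the single observation that a maximal element of $I$ is never strictly below another element of $I$, combined with the explicit description of the generators of $I(e)$; there is no hard computation, but the case analysis must be laid out cleanly.
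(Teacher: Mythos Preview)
Your argument is correct. You unwind the definition $\grpp^{I^*}=\{e\mid I(e)=I\}$ into the two inclusions, and each step is justified: the forward direction uses that a maximal element of $I$ cannot lie strictly below any generator of $I(e)$ at a different level, and the reverse direction uses that every maximal element of $I$ must actually equal $(\partial_{\gl_i}I,\gl_i)$ for some $i$, whence the equality clause pins down $m_i$. Your convention $\partial_{\gl_i}I=\gl_i$ when level $\gl_i$ is absent from $I$ is not stated in the paper (the definition~(\ref{eq:Boundary}) is simply a minimum, which would be vacuous in that case), but it is the natural choice and makes the boundary inequalities encode ``block $i$ vanishes'' exactly as you say; you should flag this convention explicitly if you write the proof out.

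As for comparison: the paper does not supply its own proof of this statement. Theorem~\ref{theorem:description-of-orbits} is presented in Section~\ref{sec:orbits-elements} as a preliminary fact imported from Dutta--Prasad~\cite{MR2793603}, alongside Theorems~\ref{theorem:ideals-homomorphisms}, \ref{theorem:ideals-and-orbits}, and~\ref{theorem:orbit-par}; the section opens by saying that an elaborate description of these results appears in~\cite{MR2793603}. So there is nothing in the present paper to compare your proof against, beyond the definitions of $I(e)$, $\partial_l I$, and $\grpp^{I^*}$ that you correctly invoke. Your write-up is a self-contained verification that the valuation description matches the ideal description, and that is exactly what is required.
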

In other words, the elements of $\grpp^{I^*}$ are those elements all of whose
coordinates have valuations not less than the corresponding boundary valuation,
and at least one coordinate corresponding to each maximal element of $I$ has
valuation exactly equal to the corresponding boundary valuation.

For each $I\in J(\mcl{P})_{\gl}$, with
\begin{equation*}
  \max(I) = \{(v_1,l_1),\dotsc,(v_s,l_s)\}
\end{equation*}
define an element $e(I)$ of $\grpp$ whose coordinates are given by
\begin{equation*}
  e_{\gl_i,t_i} =
  \begin{cases}
    \gp^{v_j} & \text{ if } \gl_i = l_j \text{ and } t_j = 1\\
    0 & \text{ otherwise}.
  \end{cases}
\end{equation*}
In other words, for each element $(v_j,l_j)$ of $\max I$, pick $\gl_i$ such that $\gl_i=l_j$.
In the summand $(\AAA {\gl_i})^{\oplus \gr_i}$, set the first coordinate of $e(I)$ to $\gp^{v_j}$,
and the remaining coordinates to $0$.

\begin{theorem}
  \label{theorem:orbit-par}
Let $\grpp$ be a finite $\A$-module of the type given in
equation~(\ref{eq:module-form}). The functions for any transitive
orbit $\mcl{O} \mapsto I(e)$ for any $e \in \mcl{O}$ and $I \mapsto
\grpp^{I^*}$ the orbit of $e(I)$ are mutually inverse bijections
between the set of $\autgp$\nobreakdash-orbits in $\grpp$ and the
set of order ideals in $J(\mcl{P})_{\gl}$.
\end{theorem}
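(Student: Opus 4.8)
The plan is to derive the theorem entirely from the structural facts already in hand: Theorem~\ref{theorem:ideals-homomorphisms}, Theorem~\ref{theorem:ideals-and-orbits}, and the stated bijection $I\mapsto\max(I)$ between $J(\mcl{P})_{\gl}$ and the antichains in $\mcl{P}_{\gl}$. Concretely I would verify, in order: (a) the assignment $e\mapsto I(e)$ is constant on $\autgp$-orbits and always lands in $J(\mcl{P})_{\gl}$; (b) $I(e(I))=I$ for every $I\in J(\mcl{P})_{\gl}$; and (c) the two resulting maps are mutually inverse.

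For (a), if $e'=g(e)$ with $g\in\autgp$ then $g$ and $g^{-1}$ exhibit $e'$ and $e$ as homomorphic images of one another, so Theorem~\ref{theorem:ideals-homomorphisms} gives $I(e)\sups I(e')$ and $I(e')\sups I(e)$, whence $I(e)=I(e')$; thus $I(\cdot)$ factors through the set of orbits. Since $I(e)$ is, by definition, the order ideal generated by the points $(\us{t_i}{\min}\,val_\gp(e_{\gl_i,t_i}),\gl_i)$ — each of which lies in $\mcl{P}_{\gl}$, and genuinely in $\mcl{P}$ because a nonzero element of $\AAA{\gl_i}$ has valuation in $\{0,\dotsc,\gl_i-1\}$ — and since the maximal elements of an order ideal belong to any generating set, $\max(I(e))\sbq\mcl{P}_{\gl}$ and hence $I(e)\in J(\mcl{P})_{\gl}$. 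Combining with Theorem~\ref{theorem:ideals-and-orbits}, two elements of $\grpp$ are automorphic exactly when their order ideals coincide, so each nonempty fibre $\grpp^{I^*}=\{e\mid I(e)=I\}$ is a single $\autgp$-orbit.

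For (b), fix $I$ with $\max(I)=\{(v_1,l_1),\dotsc,(v_s,l_s)\}$. Because $\max(I)$ is an antichain and two points of $\mcl{P}$ with a common second coordinate are comparable, the $l_j$ are pairwise distinct; as each $l_j$ equals a unique $\gl_i$ (the $\gl_i$ being strictly decreasing), the coordinatewise recipe for $e(I)$ is unambiguous and its only nonzero coordinates are the entries $\gp^{v_j}$ placed in the first slot of the summand $(\AAA{\gl_i})^{\oplus\gr_i}$ with $\gl_i=l_j$. Reading the generators of $I(e(I))$ off the definition then returns exactly $\{(v_j,l_j)\}=\max(I)$, and since an order ideal is determined by its set of maximal elements, $I(e(I))=I$. (One may instead check the boundary-valuation criterion of Theorem~\ref{theorem:description-of-orbits} for $e(I)$ directly.) In particular $e(I)\in\grpp^{I^*}$, so this fibre is nonempty and, by (a), is a genuine orbit.

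Part (c) is now formal. Given an orbit $\mcl{O}$, pick $e\in\mcl{O}$ and put $I=I(e)$; then $I(e(I))=I=I(e)$ by (b), so Theorem~\ref{theorem:ideals-and-orbits} places $e(I)$ and $e$ in the same orbit, i.e.\ $\grpp^{I^*}=\mcl{O}$. Conversely, for $I\in J(\mcl{P})_{\gl}$ the orbit $\grpp^{I^*}$ is the orbit of $e(I)$ and is sent back by $e\mapsto I(e)$ to $I(e(I))=I$. Hence the two maps are mutually inverse bijections. The only point needing any care is the bookkeeping in (b) — in particular the observation that the maximal elements of $I$ carry pairwise distinct second coordinates, which legitimises defining $e(I)$ one coordinate at a time — together with keeping track of the degenerate case $I=\es$, $e=0$; everything else is a direct application of the quoted theorems.
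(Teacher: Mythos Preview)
Your argument is correct. Note, however, that the paper does not actually supply a proof of this statement: Section~\ref{sec:orbits-elements} opens by attributing all of its results to Dutta and Prasad~\cite{MR2793603}, and Theorem~\ref{theorem:orbit-par} is simply quoted as a preliminary fact. So there is no in-paper proof to compare against.

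That said, your derivation is exactly the natural one and is self-contained given the other quoted preliminaries: you use Theorem~\ref{theorem:ideals-homomorphisms} for the forward invariance of $I(\cdot)$ on orbits, Theorem~\ref{theorem:ideals-and-orbits} for the converse, and a direct reading of the definition of $e(I)$ to check $I(e(I))=I$. The only nontrivial bookkeeping points---that $\max(I(e))\sbq\mcl{P}_\gl$ because maximal elements of a finitely generated order ideal lie among the generators, and that the second coordinates of $\max(I)$ are pairwise distinct so that $e(I)$ is well defined---are both handled cleanly. This is precisely how one would reconstruct the proof from~\cite{MR2793603} using only what the present paper states.
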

We shall say that an element of $\grpp$ is a canonical form if it is
equal to $e(I)$ for some order ideal $I\in J(\mcl{P})_{\gl} =
J(\mcl{P}_{\gl})$.

The set of endomorphic images of elements in this orbit is a $\autgp$-invariant submodule of
$\grpp$ which we denote by $\grpp^I$. We have
\begin{equation}
\label{eq:Char-Submodule}
      \grpp^I = \bigsqcup_{\{J\in J(\mcl{P})_{\gl}\mid J\subs I\}}\grpp^{J^*}.
\end{equation}
This submodule is a characteristic submodule as it is a union of $\autgp$ invariant sets
(a submodule of $\grpp$ is said to be characteristic if it is a
$\autgp$ invariant submodule of $\grpp$).

The description of $\grpp^I$ in terms of valuations of coordinates and boundary valuations is very simple:
\begin{equation}
  \label{eq:M_I}
  \grpp^I = \{ e=(e_{\gl_i,t_i})\mid val_{\gp}(e_{\gl_i,t_i})\geq \partial_{\gl_i} I\}.
\end{equation}

The map $I\mapsto \grpp^I$ is not injective on
$J(\mcl{P})$. It becomes injective when restricted to
$J(\mcl{P})_{\gl}$. For example, if $J$ is the order ideal in
$\mcl{P}$ generated by $(2,6)$, $(1,4)$ and $(0,1)$, then the ideal
$J$ is strictly larger than the ideal $I$, but when $\grpp$ is as
given in equation~(\ref{eq:fapg-eg}), $\grpp^I=\grpp^J$.

The $\autgp$\nobreakdash-orbits in $\grpp$ are parametrized by the
finite distributive lattice $J(\mcl{P})_{\gl}$. Moreover, each order
ideal $I\in J(\mcl{P})_{\gl}$ gives rise to a $\autgp$-invariant
submodule $\grpp^I$ of $\grpp$. The lattice structure of
$J(\mcl{P})_{\gl}$ gets reflected in the poset structure of the
submodules $\grpp^I$ when they are partially ordered by inclusion:

\begin{theorem}
  \label{theorem:Ideal-CharSubModules}
  Let $\grpp$ be a finite $\A$ module as given in equation~(\ref{eq:module-form}).
  The function $I \mapsto \grpp^I$, with $\grpp^I$ as in equation~(\ref{eq:Char-Submodule}),
  is a lattice isomorphism between the the set of order ideals in $J(\mcl{P})_{\gl}$ and the
  set of characteristic submodules of $\grpp$ of the form $\grpp^I$.
\end{theorem}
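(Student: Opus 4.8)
The plan is to show that $I\mapsto\grpp^I$ is a bijection onto the stated family of submodules whose inverse is again order-preserving; since the domain $J(\mcl{P})_{\gl}$ and the target (ordered by inclusion) are both lattices, an order-isomorphism between them is automatically a lattice isomorphism, which is exactly the assertion.

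First I would record a convenient normal form for $\grpp^I$. Using the valuative description~(\ref{eq:M_I}), the decomposition $\grpp=\bigoplus_{i=1}^{k}(\AAA{\gl_i})^{\oplus\gr_i}$ of~(\ref{eq:module-form}), and the fact that $\gp^{\gl_i}\AAA{\gl_i}=0$, the submodule $\grpp^I$ splits over the isotypic parts as
\begin{equation*}
\grpp^I=\bigoplus_{i=1}^{k}\bigl(\gp^{c_i(I)}\AAA{\gl_i}\bigr)^{\oplus\gr_i},\qquad c_i(I):=\min\bigl(\partial_{\gl_i}I,\gl_i\bigr)\in\{0,1,\dotsc,\gl_i\}.
\end{equation*}
In particular $\grpp^I$ depends only on the tuple $\bigl(c_1(I),\dotsc,c_k(I)\bigr)$. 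Well-definedness of $I\mapsto\grpp^I$ into the set of characteristic submodules, and surjectivity onto the set of those of the form $\grpp^I$, are then immediate from~(\ref{eq:Char-Submodule}) and the remark following it (note too that the image does not change if $I$ ranges over all of $J(\mcl{P})$, since replacing $I$ by the order ideal generated by $\{(c_i(I),\gl_i)\mid c_i(I)<\gl_i\}$ leaves every $c_i$ unchanged).

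The main step is to see that an ideal $I\in J(\mcl{P})_{\gl}$ can be reconstructed from $\bigl(c_1(I),\dotsc,c_k(I)\bigr)$. Here I would lean on the already-stated bijection $I\leftrightarrow\max(I)$ between $J(\mcl{P})_{\gl}$ and the antichains of $\mcl{P}_{\gl}$, or argue directly: since $(v,l)\le(v',l)$ in $\mcl{P}$ precisely when $v\ge v'$, the intersection of $I$ with each column $l$ is the up-set $\{(v,l)\mid\partial_{l}I\le v<l\}$; and since $\max(I)\subseteq\mcl{P}_{\gl}$, every element of $I$ lies below some $(\partial_{\gl_i}I,\gl_i)$, such a point lying in $\mcl{P}$ exactly when $\partial_{\gl_i}I<\gl_i$, i.e.\ when $c_i(I)<\gl_i$, in which case it equals $(c_i(I),\gl_i)$. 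Hence $I$ is the order ideal generated by $\{(c_i(I),\gl_i)\mid c_i(I)<\gl_i\}$, a set read off from the tuple. Consequently, if $\grpp^I=\grpp^J$ with $I,J\in J(\mcl{P})_{\gl}$, then comparing the isotypic parts in the normal form gives $c_i(I)=c_i(J)$ for every $i$, hence $I$ and $J$ have the same generating set, hence $I=J$: the map is injective.

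It remains to check monotonicity in both directions. If $I\subseteq J$ then $\partial_{\gl_i}I\ge\partial_{\gl_i}J$, so $c_i(I)\ge c_i(J)$, so $\gp^{c_i(I)}\AAA{\gl_i}\subseteq\gp^{c_i(J)}\AAA{\gl_i}$ in each isotypic part and $\grpp^I\subseteq\grpp^J$. Conversely, if $\grpp^I\subseteq\grpp^J$ then $c_i(I)\ge c_i(J)$ for all $i$, and each generator $(c_i(I),\gl_i)$ of $I$ satisfies $(c_i(I),\gl_i)\le(c_i(J),\gl_i)\in J$, whence $I\subseteq J$. Thus $I\mapsto\grpp^I$ is an isomorphism of posets, hence of lattices; and since $\min(-,\gl_i)$ commutes with both $\max$ and $\min$, one even gets the explicit formulas $\grpp^{I\cap J}=\grpp^I\cap\grpp^J$ and $\grpp^{I\cup J}=\grpp^I+\grpp^J$, which in passing re-derives that $J(\mcl{P})_{\gl}$ is closed under $\cap$ and $\cup$. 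I do not anticipate a real obstacle; the one point to watch is the capping $c_i(I)=\min(\partial_{\gl_i}I,\gl_i)$ --- an \emph{empty column} $\partial_{\gl_i}I\ge\gl_i$ is invisible to $\grpp^I$ --- but such a column contributes neither a generator of $I$ in $\mcl{P}_{\gl}$ nor a nonzero isotypic summand, so no information about $I\in J(\mcl{P})_{\gl}$ is lost, and everything else is routine poset-and-module bookkeeping.
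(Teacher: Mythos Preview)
Your proof is correct and self-contained. Note, however, that the paper does not supply its own proof of this statement: Theorem~\ref{theorem:Ideal-CharSubModules} sits in the Preliminaries section (\S\ref{sec:orbits-elements}), which opens by referring the reader to Dutta--Prasad~\cite{MR2793603} for ``an elaborate description of the results in this section''; the theorem is stated, reformulated via the identities $\grpp^{I\cup J}=\grpp^I+\grpp^J$ and $\grpp^{I\cap J}=\grpp^I\cap\grpp^J$, and then used without further argument. So there is no in-paper proof to compare against.

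Your route---extract the isotypic tuple $(c_1(I),\dotsc,c_k(I))$ from the valuative description~(\ref{eq:M_I}), recover $I\in J(\mcl{P})_{\gl}$ as the down-set of $\{(c_i(I),\gl_i)\mid c_i(I)<\gl_i\}$ via $\max(I)\subseteq\mcl{P}_{\gl}$, and check order preservation in both directions---is exactly the kind of direct argument one would expect, and your handling of the ``empty column'' case ($\partial_{\gl_i}I\ge\gl_i$) via the cap $c_i(I)=\min(\partial_{\gl_i}I,\gl_i)$ is the right way to keep the bookkeeping clean. One small presentational point: you assert at the outset that the target is a lattice, but strictly speaking this is a \emph{consequence} of your order-isomorphism (the image of a lattice under an order-isomorphism is a lattice), not a hypothesis; you might rephrase that sentence so the logic flows in the right direction. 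Everything else is in order.
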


In other words, for ideals $I,J\in J(\mcl{P})_{\gl}$,
\begin{equation*}
  \grpp^{I\cup J} = \grpp^I + \grpp^J \text{ and } \grpp^{I \cap J} = \grpp^I \cap \grpp^J.
\end{equation*}
In fact, when $\mbb{F}_q \cong \AAA {1}$ with $|\mbb{F}_q|$ having
at least three elements, every $\autgp$-invariant submodule is of
the form $\grpp^{I}$, therefore $J(\mcl{P})_{\gl}$ is isomorphic to
the lattice of $\autgp$-invariant submodules (Kerby and
Rode~\cite{MR2782608}).

This is not true for $q=2$. Consider the abelian group $\Z/2^3\Z
\oplus \Z/2\Z$ and the subgroup $H = \{(0,0),(2,1), 2(2,1) = (4,0),
3(2,1) = (6,1)\}$. This subgroup is characteristic but it does not
correspond to any ideal in $J(\mcl{P})_{\gl}$ where $\gl = (3,1) \in
\Gl$.

\subsection{\bf{Stabilizer of Canonical Forms}}
\label{sec:stab-canon-forms} Results in this subsection are proved
elaborately in~\cite{AnilAmri}. We introduce a decomposition of
$\grpp$ into a direct sum of two $\A$-modules (and this
decomposition depends on the ideal $I$):

\begin{equation}
  \label{eq:decomposition}
  \grpp = \grppp\oplus \grpppp,
\end{equation}
where $\grppp$ consists of those cyclic summands in the
decomposition given in equation~(\ref{eq:module-form}) of $\grpp$
where $e(I)$ has non-zero coordinates, and $\grpppp$ consists of the
remaining cyclic summands. In the example~\ref{eq:fapg-eg} and the
order ideal $I(e)$ of $e = (0,up,p^2,vp,1)$, where $u$ and $v$ are
coprime to $p$, we have
\begin{equation*}
  \grppp = \Z/p^4\Z\oplus \Z/p\Z,\quad \grpppp = Z/p^5\Z\oplus\Z/p^4\Z\oplus\Z/p^2\Z.
\end{equation*}
Let the first projection of $e(I)$ be $e(I)^{'} \in \grppp$.
The reason for introducing this decomposition is that the description of the stabilizer of
$e(I)^{'}$ in the automorphism group of $\grppp$ is quite nice:
\begin{theorem}
\label{lemma:stab}
  The stabilizer of $e(I)^{'} \neq 0$ in $G_{\gl^{'}}$ is
  \begin{equation*}
   G_{\gl^{'}}^I = \{\id{\grppp}+n\mid n\in \mrm{End}_{\A}(\grppp)\text{ satisfies }n(e(I)')=0\}.
  \end{equation*}
\end{theorem}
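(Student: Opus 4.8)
The plan is to show set-theoretic equality of two subsets of $G_{\gl^{'}} = \mathrm{Aut}_{\A}(\grppp)$. First I would observe that since $\grppp$ is a finite $\A$-module, the endomorphism ring $\mrm{End}_{\A}(\grppp)$ is a finite $\A$-algebra, and an endomorphism is an automorphism precisely when it is surjective (equivalently injective, by finiteness). The group $G_{\gl^{'}}$ is therefore the unit group of this ring, and — crucially — because $\grppp$ is built from cyclic summands $\AAA{\gl_i}$ all of which are quotients of the same local ring $\A$, the Jacobson radical $\mfr{m}$ of $\mrm{End}_{\A}(\grppp)$ consists of the non-surjective endomorphisms, and $1 + \mfr{m} \subseteq G_{\gl^{'}}$. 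In particular every element of $G_{\gl^{'}}$ that is congruent to the identity modulo $\mfr{m}$ is automatically invertible; but more is true here, and the claim is that the relevant stabilizer already lies in $\id{\grppp} + \mrm{End}_{\A}(\grppp)$ trivially (that containment is vacuous) — the content is the condition $n(e(I)')=0$.

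The forward inclusion is immediate: if $g = \id{\grppp} + n$ with $n(e(I)')=0$, then $g(e(I)') = e(I)' + n(e(I)') = e(I)'$, so $g$ fixes $e(I)'$, and it remains only to know such a $g$ is genuinely an automorphism. For that I would invoke the description of $e(I)'$: by construction $e(I)'$ is the canonical form, so (using Theorem \ref{theorem:description-of-orbits} applied to $\grppp$) it has one coordinate of valuation exactly $v_j$ in each isotypic block $(\AAA{l_j})^{\oplus \gr}$ that survives into $\grppp$, and zeros elsewhere. The key structural input is then that $n(e(I)')=0$ forces $n$ to kill the "distinguished" generator in each block, which confines $n$ to a two-sided ideal landing inside the radical $\mfr{m}$ — hence $\id{\grppp}+n$ is a unit. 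This is the one place a small computation with valuations is needed, and it is the step I expect to be the main obstacle: one must check that the endomorphisms killing $e(I)'$ really do form a subset of $\mfr{m}$, which uses both that the residue field has more than one element in an essential way for the structure of $G_{\gl^{'}}$, and the precise shape of the canonical form on $\grppp$ (every cyclic summand of $\grppp$ contributes a nonzero coordinate of $e(I)$, by the very definition of $\grppp$ in equation~(\ref{eq:decomposition})).

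For the reverse inclusion, suppose $g \in G_{\gl^{'}}$ fixes $e(I)'$. Write $n := g - \id{\grppp} \in \mrm{End}_{\A}(\grppp)$; then $n(e(I)') = g(e(I)') - e(I)' = 0$, so $g = \id{\grppp} + n$ is of the asserted form, and the inclusion follows with no further work. Assembling the two inclusions gives the equality. I would close by remarking that this identifies the stabilizer $G_{\gl^{'}}^I$ as an additive coset: it is in bijection (as a set, in fact as a torsor under the additive group) with the $\A$-submodule $\{\,n \in \mrm{End}_{\A}(\grppp) : n(e(I)')=0\,\}$, the kernel of the "evaluation at $e(I)'$" map $\mrm{End}_{\A}(\grppp) \to \grppp$, which is what makes subsequent orbit counting tractable; the only genuine mathematical assertion requiring proof is the automorphy statement in the forward direction, everything else being a direct rewriting.
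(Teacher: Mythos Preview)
Your two-inclusion skeleton is correct, and you rightly identify that the only substantive step is showing $\id{\grppp}+n$ is invertible whenever $n(e(I)')=0$; the reverse inclusion is indeed a tautology. However, your proposed mechanism for the forward direction contains a genuine error. The claim that the Jacobson radical of $\mrm{End}_{\A}(\grppp)$ ``consists of the non-surjective endomorphisms'' is false: since $\grppp$ has at least two cyclic summands whenever $|\max I|>1$, the endomorphism ring has nontrivial idempotents (the coordinate projections), hence is not local, and there are non-surjective endomorphisms --- those very projections --- that lie outside the radical. Likewise, the annihilator $\{n:n(e(I)')=0\}$ is only a \emph{left} ideal, not a two-sided one, so your phrase ``two-sided ideal landing inside the radical'' does not go through as written.

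The paper's route is more direct and sidesteps the radical entirely: it invokes Theorem~\ref{lemma:nilpotent}, which says that because the coordinates of $e(I)'$ form an antichain in $\mcl{P}$ (this is exactly the defining property of $\max I$), any $n$ annihilating $e(I)'$ is \emph{nilpotent}. Then $\id{\grppp}+n$ is a unit simply because $(\id{}+n)^{-1}=\sum_{k\ge 0}(-n)^k$ is a finite sum. Your instinct that ``a small computation with valuations'' is needed is correct --- that computation is precisely the content of Theorem~\ref{lemma:nilpotent} --- but nilpotency is the right target, not membership in the Jacobson radical. (If you wish to rescue the radical argument: a nil left ideal is always contained in the Jacobson radical, so nilpotency of each such $n$ would give the containment you want; but once you have nilpotency the radical is an unnecessary detour.)
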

Theorem~\ref{lemma:stab} follows from the following
Theorem~\ref{lemma:nilpotent}.
\begin{theorem}
  \label{lemma:nilpotent}
  For any $\A$-module of the form
  \begin{equation*}
    R_{\gm} = \AAA {\gm_1}\oplus\dotsb\oplus \AAA {\gm_m},
  \end{equation*}
  with $\gm_1>\dotsb>\gm_m$,
  and $x=(\gp^{v_1},\dotsc,\gp^{v_m})\in R_{\gm}$ such that the set
  \begin{equation*}
    (v_1,\gm_1),\dotsc,(v_m,\gm_m)
  \end{equation*}
  is an antichain in $\mcl{P}$, if $n\in \mrm{End}_{\A}R_{\gm}$ is such that $n(x)=0$, then $n$ is nilpotent.
\end{theorem}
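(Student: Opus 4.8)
The plan is to show that $n$ lowers a natural ``height'' filtration on $R_{\gm}$, so that a sufficiently high power of $n$ kills everything. Write $R_{\gm} = \bigoplus_{j=1}^m \AAA{\gm_j}$ with $\gm_1 > \dotsb > \gm_m$, and let $f_j$ be the standard generator of the $j$-th summand (so $x = \sum_j \gp^{v_j} f_j$). An endomorphism $n \in \End_{\A} R_{\gm}$ is determined by an $m \times m$ matrix $(n_{ij})$ where $n(f_j) = \sum_i n_{ij} f_i$, subject to the divisibility constraints forced by the relations: since $\gm_i > \gm_j$ forces $\gp^{\gm_j} f_j = 0$ but $f_i$ has order $\gp^{\gm_i}$, one needs $\gp^{\gm_i - \gm_j} \mid n_{ij}$ when $i < j$ (the map into a longer summand must vanish on the torsion), while for $i \ge j$ there is no constraint beyond $n_{ij} \in \AAA{\gm_i}$. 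So $n$ is ``block lower triangular up to $\gp$-powers.''

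First I would translate the hypothesis $n(x) = 0$ into congruences on the entries: for each $i$,
\begin{equation*}
\sum_{j=1}^m n_{ij}\,\gp^{v_j} \equiv 0 \pmod{\gp^{\gm_i}}.
\end{equation*}
The key structural input is the antichain condition on $(v_1,\gm_1),\dotsc,(v_m,\gm_m)$: recalling the partial order on $\mcl{P}$, incomparability of $(v_i,\gm_i)$ and $(v_j,\gm_j)$ for $i < j$ (so $\gm_i > \gm_j$) means we cannot have both $v_i \ge v_j$ and $\gm_i - v_i \le \gm_j - v_j$; since $\gm_i > \gm_j$ and one checks the $\le$-direction that would make them comparable is $(v_j,\gm_j) \le (v_i,\gm_i)$, the antichain condition is exactly $v_i > v_j$ whenever $i < j$, i.e. $v_1 > v_2 > \dotsb > v_m$. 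Thus the valuations are strictly decreasing while the lengths are strictly decreasing in the opposite direction. I would then argue that, up to conjugating $n$ by a suitable automorphism (or directly by induction on $m$, peeling off the summand $\AAA{\gm_1}$ of largest length), the condition $n(x) = 0$ together with the divisibility constraints forces $\val_{\gp}(n_{ij})$ to be large enough that $n$ maps the submodule generated by $f_1,\dotsc,f_r$ strictly deeper into the filtration by the submodules $\grpp^{I}$ associated to truncations of the antichain. Concretely, one shows $n$ is conjugate to a strictly-lower-triangular-type matrix modulo the relations, whence $n^m = 0$.

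I expect the main obstacle to be bookkeeping the interaction between the two ``triangularities'': the algebraic one coming from the module relations ($\gp^{\gm_i-\gm_j}\mid n_{ij}$ for $i<j$) and the one we must extract from $n(x)=0$ using the antichain/strict-valuation condition. The cleanest route is probably induction on $m$: restrict $n$ to the characteristic submodule $\grpp^{I}$ where $I$ is the ideal generated by the top element $(v_1,\gm_1)$ of the antichain (using equation~(\ref{eq:M_I})), observe that $x$ still lies there with a truncated antichain, and that $n$ descends to a well-defined endomorphism with $n(\bar x) = 0$ on the quotient by the cyclic submodule spanned by a suitable lift of $x$; apply the inductive hypothesis and then handle the one remaining dimension by hand, checking that the ``extra'' block of $n$ genuinely raises valuation because $v_1$ is strictly the largest. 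Once nilpotence of the induced maps is known on both the sub and the quotient, a short argument bounds the nilpotency index of $n$ itself.
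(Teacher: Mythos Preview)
The paper does not actually prove this theorem; it only states it and defers to \cite{AnilAmri}, so there is no ``paper's own proof'' to compare against. Your overall strategy---show that $n$ strictly lowers a filtration (equivalently, that its reduction modulo $\gp$ is a nilpotent matrix)---is the right one, but your execution has two concrete gaps.

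First, your unpacking of the antichain condition is incomplete. You claim that for $i<j$ (so $\gm_i>\gm_j$) incomparability is \emph{exactly} $v_i>v_j$. In fact one must rule out both directions in $\mcl P$, and this yields \emph{two} strict chains: $v_1>\dotsb>v_m$ \emph{and} $\gm_1-v_1>\dotsb>\gm_m-v_m$. The second chain is not automatic, and without it the theorem is simply false. For instance take $m=2$, $\gm_1=3$, $\gm_2=1$, $v_1=2$, $v_2=0$: then $v_1>v_2$ but $\gm_1-v_1=\gm_2-v_2$, so $(2,3)\le(0,1)$ and this is not an antichain; correspondingly $n=\left(\begin{smallmatrix}-1&\gp^2\\0&0\end{smallmatrix}\right)$ kills $x=(\gp^2,1)$ yet satisfies $n^3=n$. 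The point is that in the congruence $\sum_j n_{ij}\gp^{v_j}\equiv 0\ (\mathrm{mod}\ \gp^{\gm_i})$, the terms with $j<i$ have valuation $\ge v_j>v_i$ by the first chain, while the terms with $j>i$ have valuation $\ge(\gm_i-\gm_j)+v_j>v_i$ \emph{only because of the second chain}; together these force $\gp\mid n_{ii}$, which is the heart of the argument. Your write-up uses only the first chain, so the key step would fail.

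Second, even after you know $\gp\mid n_{ii}$ for all $i$ (so the mod-$\gp$ matrix is strictly lower triangular), the conclusion ``whence $n^m=0$'' is too strong. What you get is $n^m(R_{\gm})\subseteq \gp R_{\gm}$, and then iterating gives $n^{m\gm_1}=0$. Your inductive sketch via quotients by cyclic submodules is more delicate than necessary; the clean route is: reduce modulo $\gp$ to get a strictly lower triangular matrix over $\mbb F_q$, conclude $n^m$ lands in $\gp R_{\gm}$, and iterate.
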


Every endomorphism of $\grpp$ can be written as a matrix $\mattwo xyzw$, where
$x:\grppp\to \grppp$, $y:\grpppp\to\grppp$, $z:\grpppp\to \grppp$ and $w:\grpppp\to
\grpppp$ are homomorphisms.

We are now ready to describe the stabilizer of $e(I)$ in $\autgp$:

\begin{theorem}
  \label{theorem:stabilizer}
Let
\begin{itemize}
\item $N^{\gl^{'}} = \{n \in \mrm{End}_{\A}(\grppp) \mid n(e(I)')=0\}$ is a nilpotent
ideal in $\mrm{End}_{\A}(\grppp)$.
\item $M(\gl^{'},\gl^{''}) = \{z\in \Hom_{\A}(\grppp,\grpppp) \mid z(e(I)')=0\}$.
\end{itemize}
The stabilizer of $e(I)$ in $\autgp$ consists of matrices of the
form
  \begin{equation*}
    \mattwo{\id{\grppp}+n}yzw,
  \end{equation*}
  where $n\in N^{\gl^{'}} \subs \mrm{End}_{\A}(\grppp)$,  $y\in \Hom_{\A}(\grpppp,\grppp)$
  is arbitrary, $z\in M(\gl^{'},\gl^{''}) \subs \Hom_{\A}(\grppp,
  \grpppp)$, and $w\in G_{\gl^{''}}$ is invertible.
\end{theorem}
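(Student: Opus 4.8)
We follow the strategy of realizing $\Stab_{\autgp}(e(I))$ as $\Stab_{\End_{\A}(\grpp)}(e(I))\cap\autgp$, first computing the monoid stabilizer inside the full endomorphism ring and then imposing invertibility. Write $\phi\in\End_{\A}(\grpp)$ as a block matrix $\mattwo{x}{y}{z}{w}$ with respect to $\grpp=\grppp\oplus\grpppp$, so $x\in\End_{\A}(\grppp)$, $y\in\Hom_{\A}(\grpppp,\grppp)$, $z\in\Hom_{\A}(\grppp,\grpppp)$ and $w\in\End_{\A}(\grpppp)$. Since $e(I)=(e(I)^{'},0)$ in this decomposition, $\phi(e(I))=(x(e(I)^{'}),\,z(e(I)^{'}))$, so $\phi$ fixes $e(I)$ if and only if $x(e(I)^{'})=e(I)^{'}$ and $z(e(I)^{'})=0$ — that is, $z\in M(\gl^{'},\gl^{''})$ — with $y$ and $w$ unconstrained. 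This already gives the shape of the monoid stabilizer.

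Next I would treat the top-left block. If $x(e(I)^{'})=e(I)^{'}$, set $n=x-\id{\grppp}$; then $n(e(I)^{'})=0$, i.e.\ $n\in N^{\gl^{'}}$. Now $\grppp=\bigoplus_{j}\AAA{l_j}$ with the lengths $l_j$ pairwise distinct, because two elements of $\mcl{P}$ with a common second coordinate are comparable, so the antichain $\max(I)$ has distinct second coordinates; and the coordinates $(v_j,l_j)$ of $e(I)^{'}$ are precisely the elements of $\max(I)$, hence an antichain in $\mcl{P}$. Theorem~\ref{lemma:nilpotent} therefore applies to $e(I)^{'}\in\grppp$ and shows $n$ is nilpotent, so $x=\id{\grppp}+n\in\autgpp$; conversely, by Theorem~\ref{lemma:stab}, the top-left block of every element of the monoid stabilizer has this form, and $N^{\gl^{'}}$ — the kernel of the $\A$-linear evaluation $\End_{\A}(\grppp)\to\grppp$, $f\mapsto f(e(I)^{'})$ — is nilpotent by Theorem~\ref{lemma:nilpotent}.

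It remains to decide which of these block matrices are invertible. Since $x=\id{\grppp}+n$ is invertible, I would use the factorization
\begin{equation*}
\mattwo{x}{y}{z}{w}=\mattwo{x}{0}{z}{\id{\grpppp}}\,\mattwo{\id{\grppp}}{x^{-1}y}{0}{w-zx^{-1}y},
\end{equation*}
whose first factor is invertible (block lower triangular with invertible diagonal blocks $x$ and $\id{\grpppp}$) and whose second is block upper triangular with invertible top-left block. Hence $\phi\in\autgp$ exactly when $w-zx^{-1}y\in\autgppp$, and the theorem reduces to the claim that $w-zx^{-1}y$ is invertible precisely when $w$ is — equivalently, that $zx^{-1}y\in\mrm{rad}\,\End_{\A}(\grpppp)$.

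The hard part is this last claim. I would prove it modulo $\gp$, using that an endomorphism of $\grpppp$ lies in the radical precisely when, for each length $t$, its block between the length-$t$ cyclic summands of $\grpppp$ vanishes modulo $\gp$. Fixing length-$t$ summands $C,C'$ of $\grpppp$ and expanding the corresponding block of $zx^{-1}y$ over the summands $\AAA{l_j}$ of $\grppp$ as a sum of triple composites $C\xrightarrow{y}\AAA{l_j}\xrightarrow{x^{-1}}\AAA{l_{j'}}\xrightarrow{z}C'$, one uses that a homomorphism $\AAA{a}\to\AAA{b}$ carries a generator into $\gp^{\max(b-a,0)}\AAA{b}$: the image of a generator of $C$ under such a composite then has valuation at least $(l_j-t)^{+}+(l_{j'}-l_j)^{+}+(t-l_{j'})^{+}$, which can be $0$ only if $l_j=l_{j'}=t$, forcing — by distinctness of the $l_j$ — $j=j'=j_0$, the unique index (if any) with $l_{j_0}=t$. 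So modulo $\gp$ the block is carried by the single composite routed through $\AAA{l_{j_0}}$. Now the antichain hypothesis enters: reading $z(e(I)^{'})=0$ in the $C'$-coordinate gives $\sum_{j}\gp^{v_j}z_j(1_j)=0$ in $\AAA{t}$, where $z_j\colon\AAA{l_j}\to C'$ is the relevant component and $1_j$ generates $\AAA{l_j}$; were $z_{j_0}(1_{j_0})$ a unit, the term $\gp^{v_{j_0}}z_{j_0}(1_{j_0})$ would have valuation exactly $v_{j_0}<t$, so some other term $\gp^{v_j}z_j(1_j)$ would have valuation $\le v_{j_0}$, and unwinding $v_j+(t-l_j)^{+}\le v_{j_0}$ against the order on $\mcl{P}$ shows $(v_{j_0},t)\le(v_j,l_j)$, contradicting that $\max(I)$ is an antichain. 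Hence $z_{j_0}\equiv 0\pmod{\gp}$, the surviving composite vanishes modulo $\gp$, and $zx^{-1}y\in\mrm{rad}\,\End_{\A}(\grpppp)$; so $w$ ranges exactly over $\autgppp$, and assembling the three steps yields the stated description. The main obstacle is exactly this radical computation: the valuation bookkeeping through triple composites in the truncations $\AAA{t}$, together with the precise extraction of the antichain inequality from the coordinate relation, is where care is required, though it reduces in the end to the definition of the order on $\mcl{P}$.
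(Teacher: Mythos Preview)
Your argument is correct. The monoid--stabilizer computation and the reduction to invertibility of $w-zx^{-1}y$ are straightforward, and your valuation bookkeeping for the radical claim is accurate: the only composite $C\to\AAA{l_j}\to\AAA{l_{j'}}\to C'$ that can survive modulo~$\gp$ is the one with $l_j=l_{j'}=t$, and your extraction of $(v_{j_0},t)\le(v_j,l_j)$ from $v_j+(t-l_j)^{+}\le v_{j_0}$ (checking the two cases $l_j\ge t$ and $l_j<t$ separately) correctly yields the contradiction with the antichain property of $\max(I)$. One small point: Theorem~\ref{lemma:nilpotent} gives only that each element of $N^{\gl^{'}}$ is nilpotent; to conclude that the left ideal $N^{\gl^{'}}$ is itself nilpotent you are implicitly using that a nil one--sided ideal in an Artinian ring is nilpotent, which is standard but worth naming.

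As for comparison: the paper does not prove this theorem in the body---it states it and refers the reader to~\cite{AnilAmri} for the argument (``Results in this subsection are proved elaborately in~\cite{AnilAmri}''). So there is no in--paper proof to set yours against. Your route via the Schur--complement factorization and the explicit radical computation is a clean, self--contained argument; it makes transparent exactly where the antichain hypothesis on $\max(I)$ is used (namely to force $z_{j_0}\equiv 0\pmod\gp$), which is the substantive point.
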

\subsection{\bf{The Stabilizer $\autgp^I$ Orbit of an Element in $\grpp$}}
\label{sec:stabilizer-orbit-an} Results in this subsection are also
proved elaborately in~\cite{AnilAmri}. Let $\autgp^I$ denote the
stabilizer of $e(I)\in \grpp$. Write each element $m \in \grpp$ as
$m=(m',m'')$ with respect to the decomposition given in the
equation~(\ref{eq:decomposition}) of $\grpp$. Also, for any $m'\in
\grppp$, let $\bar m'$ denote the image of $m'$ in $\grppp/\grppp
e(I)'$.

Theorem~\ref{theorem:stabilizer} allows us to describe the orbits of $m$ under the action of
$\autgp^I$, which is the same as describing the $\autgp$\nobreakdash-orbits
in $\grpp\times\grpp$ whose first component lies in the orbit $\grpp^{I^*}$ of $e(I)$.
\begin{theorem}
  \label{theorem:sub-orbit}
  Given $l$ and $m$ in $\grpp$, $l$ lies in the $\autgp^I$\nobreakdash-orbit of $m$ in $\grpp$
  if and only if the following conditions hold:
  \begin{itemize}
  \item $l'\in m' + \grppp^{I(\bar m')\cup I(m'')}$.
  \item \label{item:second-condition} $l''\in \grpppp^{{I(m'')}^*}+\grpppp^{I(\bar m')}$.
  \end{itemize}
(Here $I(\bar{m}'),I(m'')$ are ideals in $J(\mcl{P})_{\gl}$)
\end{theorem}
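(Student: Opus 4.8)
The plan is to compute the $\autgp^I$-orbit of $m=(m',m'')$ explicitly from the matrix description of the stabilizer $\autgp^I$ in Theorem~\ref{theorem:stabilizer}, and then to translate each image-set that appears into a characteristic submodule (or into an $\autgp$-orbit) via the homomorphism criterion of Theorem~\ref{theorem:ideals-homomorphisms}. By Theorem~\ref{theorem:stabilizer} a general element of $\autgp^I$ has the form $\mattwo{\id{\grppp}+n}{y}{z}{w}$ with $n\in N^{\gl^{'}}\subs\End_{\A}(\grppp)$ (so that $n(e(I)')=0$), $y\in\Hom_{\A}(\grpppp,\grppp)$ arbitrary, $z\in M(\gl^{'},\gl^{''})\subs\Hom_{\A}(\grppp,\grpppp)$ (so that $z(e(I)')=0$), and $w\in\autgppp$. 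Applying such a matrix to the column $(m',m'')$ gives the vector $\big(\,m'+n(m')+y(m''),\; z(m')+w(m'')\,\big)$. Because by Theorem~\ref{theorem:stabilizer} the four parameters $n,y,z,w$ range independently over their respective sets, the orbit $\autgp^I\cdot m$ is the Cartesian product of the set $A:=\{\,m'+n(m')+y(m'')\,\}$ of possible first components with the set $B:=\{\,z(m')+w(m'')\,\}$ of possible second components (given $a\in A$ and $b\in B$, choose $(n,y)$ realizing $a$ and, independently, $(z,w)$ realizing $b$; the resulting matrix lies in $\autgp^I$ and sends $m$ to $(a,b)$); it therefore suffices to prove $A=m'+\grppp^{I(\bar m')\cup I(m'')}$ and $B=\grpppp^{I(m'')^*}+\grpppp^{I(\bar m')}$.

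For $A$: any $n\in N^{\gl^{'}}$ annihilates $e(I)'$, hence the submodule $\grppp e(I)'$ generated by $e(I)'$, so $n$ factors through the quotient $\grppp/\grppp e(I)'$ as $\bar n\circ\pi$ with $\pi$ the projection; conversely every homomorphism $\bar n\colon\grppp/\grppp e(I)'\to\grppp$ yields such an $n$. Thus $\{\,n(m'):n\in N^{\gl^{'}}\,\}$ is exactly the set of homomorphic images of $\bar m'$ in $\grppp$, which by Theorem~\ref{theorem:ideals-homomorphisms} equals $\{\,f\in\grppp:I(f)\sbq I(\bar m')\,\}=\grppp^{I(\bar m')}$ by the valuation description~(\ref{eq:M_I}). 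Likewise $\{\,y(m''):y\in\Hom_{\A}(\grpppp,\grppp)\,\}$ is the set of homomorphic images of $m''$ in $\grppp$, equal to $\grppp^{I(m'')}$. Since $n$ and $y$ are chosen independently, $A=m'+\grppp^{I(\bar m')}+\grppp^{I(m'')}=m'+\grppp^{I(\bar m')\cup I(m'')}$, the last equality being the additivity of characteristic submodules in Theorem~\ref{theorem:Ideal-CharSubModules}. This is the first bullet.

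For $B$: the same factoring applied to $z\in M(\gl^{'},\gl^{''})$ shows $\{\,z(m'):z\in M(\gl^{'},\gl^{''})\,\}$ is the set of homomorphic images of $\bar m'$ in $\grpppp$, namely $\grpppp^{I(\bar m')}$ by Theorem~\ref{theorem:ideals-homomorphisms}; and $\{\,w(m''):w\in\autgppp\,\}$ is by definition the whole $\autgppp$-orbit of $m''$ in $\grpppp$, namely $\grpppp^{I(m'')^*}$ by Theorem~\ref{theorem:orbit-par}. By independence of $z$ and $w$, $B=\grpppp^{I(m'')^*}+\grpppp^{I(\bar m')}$, which is the second bullet. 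Combining, $l\in\autgp^I\cdot m$ if and only if $l'\in A$ and $l''\in B$, i.e.\ if and only if both bullets hold.

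The genuinely delicate points, which I would spell out, are three. First, one must check that the set of homomorphic images of an element $x$ of a finite $\A$-module inside another one is precisely the submodule cut out by the boundary valuations of $I(x)$, i.e.\ that $\{\,f:I(f)\sbq I(x)\,\}$ coincides with~(\ref{eq:M_I}); this follows from Theorem~\ref{theorem:ideals-homomorphisms} and the elementary equivalence $(v,l)\in I(x)\iff v\geq\partial_l I(x)$. Second, the ideals $I(\bar m')$ and $I(m'')$ are a priori attached to the \emph{different} modules $\grppp/\grppp e(I)'$ and $\grpppp$, so one must justify that they may legitimately be regarded as order ideals in $J(\mcl{P})_{\gl}$, that the join $I(\bar m')\cup I(m'')$ and the submodule $\grppp^{I(\bar m')\cup I(m'')}$ are then well defined, and that Theorem~\ref{theorem:Ideal-CharSubModules} applies; I expect this step --- reading off $I(\bar m')$ from the structure of the quotient $\grppp/\grppp e(I)'$ and locating it correctly inside the fundamental poset --- to be the main obstacle. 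Third, one should dispose of the degenerate case $e(I)=0$ (equivalently $\grppp=0$), in which $\autgp^I=\autgp$ and the statement collapses to the classical description of the $\autgp$-orbit of $m$ given by Theorem~\ref{theorem:orbit-par}. Everything else is bookkeeping with Theorems~\ref{theorem:stabilizer},~\ref{theorem:ideals-homomorphisms} and~\ref{theorem:Ideal-CharSubModules}.
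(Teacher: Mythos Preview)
Your approach is correct and is the natural one. However, note that the paper does not actually contain its own proof of this theorem: the opening line of Section~\ref{sec:stabilizer-orbit-an} reads ``Results in this subsection are also proved elaborately in~\cite{AnilAmri}'', and the statement is simply imported from that reference. So there is no in-paper argument to compare against.

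That said, your proof is essentially what one finds in~\cite{AnilAmri}: apply a generic element of $\autgp^I$ in its block form (Theorem~\ref{theorem:stabilizer}) to $(m',m'')$, use the independence of the four blocks to decouple the two coordinates, and then identify each of the image-sets $\{n(m')\}$, $\{y(m'')\}$, $\{z(m')\}$, $\{w(m'')\}$ with the appropriate characteristic submodule or orbit via Theorem~\ref{theorem:ideals-homomorphisms} and Theorem~\ref{theorem:orbit-par}. The factoring of $n$ and $z$ through the quotient $\grppp/\grppp e(I)'$ is exactly the right mechanism for producing $I(\bar m')$.

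Of the three ``delicate points'' you flag, the second one is indeed the only real content beyond bookkeeping: one must verify that $I(\bar m')$, computed in the quotient module $\grppp/\grppp e(I)'$, is a well-defined order ideal that can be pushed into $J(\mcl P)_\gl$ so that the unions and the submodules $\grppp^{I(\bar m')\cup I(m'')}$, $\grpppp^{I(\bar m')}$ make sense. This requires knowing the isomorphism type of $\grppp/\grppp e(I)'$ (it is again a direct sum of cyclic modules, with parts determined by the antichain $\max I$), and is carried out in~\cite{AnilAmri}. Your first and third points are routine.
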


\section{\bf{Two Simple Cases}}
In this section we describe two simple cases when the finite torsion
module is cyclic. We begin with a remark.
\begin{remark}
A complex representation $V$ of a finite group $G$ is multiplicity
free if and only if the endomorphism algebra $\Endo_{G}[V]$ is
commutative.
\end{remark}
\begin{theorem}
For $\gl = (n) \in \Gl$, the permutation representation of $\autgp$
on any $\autgp$\nobreakdash-orbit $\grpp^{I^*}$ in $\grpp$ is
multiplicity-free.
\end{theorem}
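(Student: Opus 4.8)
The plan is to reduce the statement to the elementary fact that every transitive permutation representation of a finite \emph{abelian} group is multiplicity free; the point is that for $\gl=(n)$ the automorphism group is abelian.

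First I would make the orbit explicit. For $\gl=(n)$ the module is the cyclic module $\grpp=\A_n=\AAA{n}$, and since $\End_{\A}(\grpp)=\A_n$ acts by multiplication, $\autgp=\A_n^{*}=(\AAA n)^{*}$ is the unit group, acting on $\grpp$ by multiplication. By Theorem~\ref{theorem:description-of-orbits} the nonzero $\autgp$-orbits are exactly the sets $\gp^{j}\A_n^{*}$ for $0\le j<n$ (the orbit corresponding to the ideal $I$ with $\partial_n I=j$), together with the orbit $\{0\}$. The case $\mcl{O}=\{0\}$ is trivial, so fix $0\le j<n$ and set $\mcl{O}=\gp^{j}\A_n^{*}$, which is a valuative combinatorial set in the sense of Definition~\ref{def:Combinatorial}.

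Next I would identify the $\autgp$-set $\mcl{O}$. Since $\A$ is a DVR, $\gp^{j}u=\gp^{j}u'$ in $\A_n$ if and only if $u\equiv u'\pmod{\gp^{\,n-j}}$; hence $\gp^{j}w\mapsto(w\bmod\gp^{\,n-j})$ gives a bijection $\mcl{O}\cong\A_{n-j}^{*}=(\AAA{n-j})^{*}$ under which the multiplication action of $\autgp=\A_n^{*}$ becomes the left-translation action of $\A_n^{*}$ on $\A_{n-j}^{*}$, which factors through the quotient map $\A_n^{*}\to\A_{n-j}^{*}$. Equivalently, the stabilizer $(\autgp)_{\gp^{j}}$ is exactly $1+\gp^{\,n-j}\A_n$. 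Thus $\mbb{C}[\mcl{O}]$ is the regular representation of the finite abelian group $A:=\A_{n-j}^{*}$, inflated along $\autgp\to A$.

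Finally I would conclude. As a representation of $A$ the regular representation decomposes as $\bigoplus_{\chi\in\widehat A}\mbb{C}_{\chi}$, a sum of pairwise distinct one-dimensional representations, and it remains multiplicity free after inflation to $\autgp$ since the inflated characters stay pairwise distinct; equivalently, by Frobenius reciprocity the multiplicity of any character of $\autgp$ in $\mbb{C}[\mcl{O}]=\operatorname{Ind}_{(\autgp)_{e}}^{\autgp}\mathbf 1$ is $0$ or $1$. Via the remark preceding the statement this is the same as commutativity of $\Endo_{\autgp}(\mbb{C}[\mcl{O}])$, which here is transparent in the spirit of the general argument: that algebra is the Hecke algebra with basis the $\autgp$-orbits on $\mcl{O}\times\mcl{O}$, and in the model $\mcl{O}\cong A$ these orbits are $\mcl{O}_{a}=\{(x,xa):x\in A\}$ for $a\in A$, with $I_{\mcl{O}_{a}}*I_{\mcl{O}_{b}}=I_{\mcl{O}_{ab}}$, so commutativity follows from $ab=ba$ in $A$. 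There is no genuine obstacle in this case; the only step deserving care is the bookkeeping in the identification above, namely pinning down the stabilizer of $\gp^{j}$, which is exactly where the DVR hypothesis is used. This toy computation already exhibits the three moves of the general proof: present the orbit as a combinatorial set, describe the orbits of $\mcl{O}\times\mcl{O}$, and read off commutativity of the convolution algebra.
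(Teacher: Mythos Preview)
Your argument is correct. The paper's proof goes directly to the Hecke-algebra computation: it lists the diagonal $\autgp$-orbits on $\grpp^{I^*}\times\grpp^{I^*}$ as the sets $(\grpp^{I^*})^y=\{(a,ya)\}$ indexed by $y\in(\AAA{n-j})^*$, checks $I_{y_1}\ast I_{y_2}=I_{y_1y_2}$, and concludes commutativity of $\Endo_{\autgp}(\mbb{C}[\grpp^{I^*}])$. Your second paragraph reproduces exactly this computation after the identification $\mcl{O}\cong A=(\AAA{n-j})^*$, so in that part the two proofs coincide. Where you differ is in leading with the observation that $\autgp=\A_n^*$ is \emph{abelian} and that $\mbb{C}[\mcl{O}]$ is the inflation of the regular representation of the abelian quotient $A$, which already forces multiplicity-freeness without touching the convolution algebra at all. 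That shortcut is unavailable for general $\gl$ (where $\autgp$ is not abelian), whereas the paper's Hecke-algebra route is the template for the general case; your inclusion of the convolution computation therefore serves well as the toy instance of the paper's method, while the abelian-group argument is a cleaner but non-generalizable bonus.
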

\begin{proof}
In this case we have that the number of orbits of the group under
the automorphism group is $(n+1)$.
\begin{center}
\begin{tabular}{ | l | l | l |}
\hline Ideal I & Orbit $\grpp^{I^*}$ & $\grpp^I$\\ \hline $\max\ I =
\{(j,n)\} \text{ such that } 0\leq j < n$ & $\gp^j\A_n^*$ &
$\gp^j(\AAA {n})$\\ \hline $\max\ I = \es$ & $\{\ul{0}\}$ & $0$\\
\hline
\end{tabular}
\end{center}
Consider the orbit $\grpp^{I^*}$ with $\max(I) = \{(j,n)\}$. For
each $y \in (\AAA {n-j})^{*}$, let
\begin{equation*}
  (\grpp^{I^{*}})^y = \{(a,ya) | \text{ for all } a \in \grpp^{I^*}\}.
\end{equation*}
The partition of $\grpp^{I^*} \times \grpp^{I^*}$ into transitive
subsets under the diagonal action of $\autgp$ is given by
\begin{equation*}
  \grpp^{I^*} \times \grpp^{I^*} = \bigsqcup_{y\in  (\AAA {n-j})^{*}} (\grpp^{I^*})^y.
\end{equation*}
Let $I_{y}$ denote the indicator function of $(\grpp^{I^*})^y$. Then
we have $I_{y_1} \ast I_{y_2} = I_{y_1y_2} \text{ for all }
y_1,y_2$\\ $\in (\AAA {n-j})^{*}$ which is obvious in this case
(convolution of two lines in the plane corresponding to
$(\grpp^{I^{*}})^{y_1}$ and $(\grpp^{I^{*}})^{y_2}$ passing through
the origin with slopes not in the set $\{0, \infty\}$ correspond to
multiplication of their slopes $y_1,y_2$). So the endomorphism
algebra $\Endo_{\autgp}(\mbb{C}[\grpp^{I^{*}}])$ is commutative. The
permutation representation on the zero orbit is the trivial
representation. Hence the permutation representation on any orbit in
the case $\gl = (n) \in \Gl$ is multiplicity-free.
\end{proof}

\begin{theorem}
\label{theorem:SingleComponentCase} For $\gl = (n^k) \in \Gl$ with
$k>1$ the permutation representation of $\autgp$ on any
$\autgp$\nobreakdash-orbit $\grpp^{I^*}$ in $\grpp$ is
multiplicity-free.
\end{theorem}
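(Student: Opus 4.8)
Recall from the remark above that multiplicity-freeness of $\mbb{C}[\mcl{O}]$ is equivalent to commutativity of $\Endo_{\autgp}(\mbb{C}[\mcl{O}])$, which, as explained in the sketch of proof, is the convolution algebra $\mbb{C}_{\autgp}[\mcl{O}\times\mcl{O}]$ with basis the indicator functions $I_{\mcl{E}}$ of the $\autgp$-orbits $\mcl{E}\subs\mcl{O}\times\mcl{O}$. So the goal is the counting identity
\begin{equation*}
  |\{z\in\mcl{O}\mid(x,z)\in\mcl{O}_1,\ (z,y)\in\mcl{O}_2\}|=|\{z\in\mcl{O}\mid(x,z)\in\mcl{O}_2,\ (z,y)\in\mcl{O}_1\}|
\end{equation*}
for all orbits $\mcl{O}_1,\mcl{O}_2\subs\mcl{O}\times\mcl{O}$ and every pair $(x,y)$ lying in a fixed orbit. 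First I would simplify the module. For $\gl=(n^k)$ the module $\grpp=(\AAA n)^{\oplus k}$ is free of rank $k$ over the finite chain ring $\A_n=\AAA n$, and $\autgp=\mrm{GL}_k(\A_n)$; the zero orbit gives the trivial representation, and a non-zero orbit is $\mcl{O}=\grpp^{I^*}$ with $\max(I)=\{(j,n)\}$ for some $0\le j<n$, so $\mcl{O}=\gp^j\A_n^{k*}$. The assignment $\gp^jv\mapsto v$ identifies $\mcl{O}$, as an $\autgp$-set, with the set $\A_{n-j}^{k*}$ of unimodular vectors of $(\AAA{n-j})^{\oplus k}$, on which $\autgp$ acts through the reduction epimorphism $\mrm{GL}_k(\A_n)\twoheadrightarrow\mrm{GL}_k(\A_{n-j})$. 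Putting $m=n-j$ and $G=\mrm{GL}_k(\A_m)$, it suffices to prove that the permutation representation of $G$ on $\A_m^{k*}$ is multiplicity free, for every $m\ge1$.

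Next I would classify the $G$-orbits on $\A_m^{k*}\times\A_m^{k*}$. Take the canonical form $u=e_1=(1,0,\dotsc,0)$; its stabilizer in $G$ consists of the matrices $\left(\begin{smallmatrix}1&b\\0&D\end{smallmatrix}\right)$ with $b\in(\AAA m)^{1\times(k-1)}$ arbitrary and $D\in\mrm{GL}_{k-1}(\A_m)$. Either by a direct row reduction, or by specializing Theorem~\ref{theorem:sub-orbit}, one finds that the $\Stab(e_1)$-orbit of $v=(v_1,v'')$ (with $v_1\in\AAA m$ the first coordinate and $v''\in(\AAA m)^{\oplus(k-1)}$ the rest) equals $(v_1+\gp^s\A_m)\times\gp^s\A_m^{(k-1)*}$, where $s=\us{i}{\min}\,val_{\gp}(v''_i)\in\{0,1,\dotsc,m\}$; it is therefore determined by the pair $(s,\bar c)$, with $\bar c$ the class of $v_1$ in $\A_s=\AAA s$, a unit whenever $s\ge1$. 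Hence the $G$-orbits on $\A_m^{k*}\times\A_m^{k*}$ are: a single ``independent'' orbit $\mcl{J}$ (the case $s=0$), and, for each $1\le s\le m$ and each unit $\bar c\in\A_s^{*}$, an orbit $\mcl{D}_{s,\bar c}$; for $s=m$ this is the ``proportional'' orbit $\mcl{D}_{m,\bar c}=\{(u,v)\mid v=\bar cu\}$. (In particular the convolution algebra has dimension $1+\sum_{s=1}^m|\A_s^{*}|=q^m$.)

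Finally I would prove commutativity of $H=\mbb{C}_{\autgp}[\mcl{O}\times\mcl{O}]$, exploiting two structural features. First, the transpose $f\mapsto f^{\mrm{T}}$, $f^{\mrm{T}}(x,y)=f(y,x)$, is an anti-automorphism of $H$; the normal forms above show that it fixes $I_{\mcl{J}}$ and carries $I_{\mcl{D}_{s,\bar c}}$ to $I_{\mcl{D}_{s,\bar c^{-1}}}$, since $(u,v)\in\mcl{D}_{s,\bar c}$ forces $(v,u)\in\mcl{D}_{s,\bar c^{-1}}$. Second, $I_{\mcl{D}_{m,\bar c}}$ is the operator $\gd_v\mapsto\gd_{\bar cv}$ on $\mbb{C}[\mcl{O}]$, that is, the action of the central scalar $\bar c\,\id{\grpp}\in\autgp$, hence lies in the centre of $H$, and $I_{\mcl{D}_{m,\bar c}}\ast I_{\mcl{D}_{m,\bar c'}}=I_{\mcl{D}_{m,\bar c\bar c'}}$; so the $I_{\mcl{D}_{m,\bar c}}$ span a central subalgebra isomorphic to $\mbb{C}[\A_m^{*}]$ and may be set aside. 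It then remains to check $I_{\mcl{D}_{s,\bar c}}\ast I_{\mcl{D}_{s',\bar c'}}=I_{\mcl{D}_{s',\bar c'}}\ast I_{\mcl{D}_{s,\bar c}}$ and $I_{\mcl{D}_{s,\bar c}}\ast I_{\mcl{J}}=I_{\mcl{J}}\ast I_{\mcl{D}_{s,\bar c}}$. Writing out the structure constants by means of the orbit description, the intermediate vector $z$ is pinned down by the valuations of its ``perpendicular'' components relative to $x$ and to $y$, and by the residues of its leading coordinates modulo the relevant powers of $\gp$; the count then factors as (the number of admissible perpendicular directions) times (the number of admissible leading residues), and one verifies that it is unchanged when $(s,\bar c)$ and $(s',\bar c')$ are interchanged.

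The main obstacle is precisely this last symmetry: organizing the case analysis according to $\min(s,s')$ versus the target orbit, dealing with the boundary cases $s=m$ or $s'=m$, and confirming that the resulting products of powers of $q$ and of orders of unit groups are invariant under the interchange. This is the role of counting lemmas of the type of Lemmas~\ref{lemma:LemmaCommutativity}, \ref{lemma:countingvectors} and \ref{lemma:LinearIndependences}; here, since $\gl=(n^k)$ has a single isotypic component, no partition decomposition intervenes and the counts are essentially one-dimensional, a coordinate section $\mbb{F}_q\hookrightarrow\AAA{l}$ being used only to make the congruences explicit. In particular, no lower bound on $q$ is needed for this case.
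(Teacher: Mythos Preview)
Your route is genuinely different from the paper's, and in fact more accurate. The paper's argument is very brief: it asserts that the diagonal $\autgp$-orbits on $\grpp^{I^*}\times\grpp^{I^*}$ are precisely the ``proportional'' sets $(\grpp^{I^*})^y=\{(a,ya)\}$ for $y\in(\AAA{n-j})^*$ together with a \emph{single} residual orbit $\mcl O_g$, namely their complement; commutativity then drops out in one line, since the $I_y$ multiply as the group algebra of $(\AAA{n-j})^*$ and $I_g=(\text{indicator of the full square})-\sum_y I_y$ is forced to commute with every $I_y$. Your parametrisation by $(s,\bar c)$ with $0\le s\le m=n-j$ shows that this orbit count is too coarse whenever $m\ge 2$: there are $q^{m}$ orbits in all, not $1+|(\AAA m)^*|$. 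Concretely, for $k=2$ and $m=2$ the pairs $(e_1,e_2)$ and $(e_1,(1,\gp))$ both lie in the paper's set $\mcl O_g$ but belong to distinct $\mrm{GL}_2(\A_2)$-orbits, since no matrix fixing $e_1$ can carry a unit second coordinate to a non-unit one. So the paper's shortcut is literally correct only for $m=1$; for $m\ge 2$ the statement is really a special case of the later general argument (Theorem~\ref{theorem:Commutativity}).

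Your plan---reduce to $\mrm{GL}_k(\A_m)$ acting on unimodular vectors, compute the stabiliser of $e_1$ and its orbits, isolate the central scalars $\mcl D_{m,\bar c}$, and then count---is sound and amounts to specialising the paper's general machinery to a single isotypic block. The commutativity checks you flag as the ``main obstacle'' are exactly what Lemmas~\ref{lemma:LemmaCommutativity} and~\ref{lemma:LinearIndependences} are designed to handle. One caveat on your closing remark that ``no lower bound on $q$ is needed for this case'': Lemma~\ref{lemma:LinearIndependences} explicitly assumes $q\ge 3$ or $\gr>2$, so for $k=2$ and $q=2$ the cited lemmas do not apply as stated and a separate (short, but not automatic) verification would be required.
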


\begin{proof}
Here also the number of orbits of the group under the automorphism
group is $(n+1)$.
\begin{center}
\begin{tabular}{ | l | l | l |}
\hline Ideal $I$ & Orbit $\grpp^{I^*}$ & $\grpp^I$\\
\hline $\max\ I = \{(j,n)\}, 0 \leq j < n$ & $\gp^j\A_n^{k*}$ &
$\gp^j(\AAA {n})^k$\\ \hline $\max\ I = \es$ & $\{\ul{0}\}$ & $0$\\
\hline
\end{tabular}
\end{center}

Consider the orbit $\grpp^{I^*}$ with $\max(I) = \{(j,k)\}$. Again
for each $y \in (\AAA {n-j})^{*}$ we have a transitive subset
$(\grpp^{I^*})^y$ of $\grpp^{I^*} \times \grpp^{I^*}$ defined as
$(\grpp^{I^*})^y = \{(a,ya) | \text{ for all } a \in \grpp^{I^*}\}$.
The complement of $\us{y\in(\AAA
{n-j})^{*}}{\bigcup}(\grpp^{I^*})^y$ in $\grpp^{I^*}\times
\grpp^{I^*}$ is also a transitive set, which we denote by $\mcl
O_g$. This is the largest transitive set which arises only when
$k>1$. Thus the decomposition of $(\grpp^{I^*} \times \grpp^{I^*})$
into transitive sets under the diagonal action of $\autgp$ is given
by
\begin{equation*}
(\grpp^{I^*} \times \grpp^{I^*}) = \mcl{O}_{g}\bigsqcup
\Big(\us{y\in(\AAA {n-j})^{*}}{\bigsqcup}(\grpp^{I^*})^y\Big).
\end{equation*}
Here the transitive subsets are parametrized by the set
$(\AAA {n-j})^{*} \cup \{g\}$. Let $I_{y}$ denote the indicator
function of $(\grpp^{I^*})^y$. Then we have $I_{y_1} \ast I_{y_2} =
I_{y_1y_2} \text{ for all } y_1,y_2 \in (\AAA {n-j})^{*}$. Let
$I_{g}$ denote the indicator function of $\mcl{O}_{g}$ then $I_g$
commutes with $I_y$ because the indicator function of the whole set
$\grpp^{I^*} \times \grpp^{I^*}$ commutes with $I_{y}$ for each $y$.
So the endomorphism algebra $\Endo_{\autgp} (\mbb{C}[\grpp^{I^*}])$
is commutative. The permutation representation on the zero orbit is
trivial and 1-dimensional. Hence the permutation representation on
any orbit in the case $\gl = (n^k) \in \Gl$ is multiplicity-free.
\end{proof}

\section{\bf{Transitive Subsets in Similar Orbit of Pairs}}
Here we provide a general description for a transitive subset
$\mcl{O}$ of pairs in similar orbit $\grpp^{I^{*}}$ of pairs
$\grpp^{I^{*}} \times \grpp^{I^{*}}$ for the diagonal action of
$\autgp$ which has a bijective correspondence with an orbit under
the action of the stabilizer $\autgp^I$ on $\grpp^{I^{*}}$ using
Observation~\ref{obs:Bijection}.

First of all we observe that each orbit $\grpp^{I^{*}} \subs \grpp$
intersects any isotypic part $\A_{\gl_i}^{\gr_i}=(\AAA
{\gl_i})^{\gr_i}$ of $\grpp$ in a set of the form
\begin{enumerate}[label=\arabic*$\bm{.}$]
\item $\gp^{\partial_{\gl_i}I}\A_{\gl_i}^{\gr_i*}=\gp^{\partial_{\gl_i}I}
(\AAA {\gl_i})^{\gr_i} - \gp^{(\partial_{\gl_i}I)+1}(\AAA {\gl_i})^{\gr_i}$
if $(\partial_{\gl_i}I,\gl_i) \in \max(I)$.
\item $\gp^{\partial_{\gl_i}I}\A_{\gl_i}^{\gr_i}=\gp^{\partial_{\gl_i}I}
(\AAA {\gl_i})^{\gr_i}$ if $(\partial_{\gl_i}I,\gl_i) \nin \max(I)$.
\end{enumerate}
and moreover $\grpp^{I^{*}}$ is the product of these intersections.
We shall show in Section~\ref{sec:OrbitOfPairs}, in
Theorems~\ref{theorem:CanonicalSimilarPair}~\ref{theorem:GeneralOrbitPairMax}~\ref{theorem:GeneralOrbitPairNonMax}~\ref{theorem:GeneralOrbitPairConverse} that each transitive subset
$\mcl{O}\subs \grpp^{I^{*}} \times \grpp^{I^{*}} \subs \grpp \times
\grpp$ consists a set of ordered pairs which also has a component
structure. The $i^{th}$ component is a subset of
$\gp^{\partial_{\gl_i}I}\A_{\gl_i}^{\gr_i*} \times
\gp^{\partial_{\gl_i}I}\A_{\gl_i}^{\gr_i*}$ if
$(\partial_{\gl_i}I,\gl_i) \in \max(I)$ whereas it is a subset of
$\gp^{\partial_{\gl_i}I}\A_{\gl_i}^{\gr_i} \times
\gp^{\partial_{\gl_i}I}\A_{\gl_i}^{\gr_i}$ if
$(\partial_{\gl_i}I,\gl_i) \nin \max(I)$. The component sets has the
following description given below. Moreover $\mcl{O}$ is the product
of these intersections of ordered pairs.
\\
In case $1$, if $(\partial_{\gl_i}I,\gl_i) \in \max(I)$
\begin{itemize}
\item $\{(a,b) \in \gp^{\partial_{\gl_i}I}\A_{\gl_i}^{\gr_i*} \times
\gp^{\partial_{\gl_i}I}\A_{\gl_i}^{\gr_i*} \mid b-ay \in
\gp^r\A_{\gl_i}^{\gr_i} \text{ for some } r > \partial_{\gl_i}I\\
\text{ and for some } y \in \A^{*}\}$\\
$\textbf{ OR }$\\
\item $\{(a,b) \in \gp^{\partial_{\gl_i}I}\A_{\gl_i}^{\gr_i*} \times
\gp^{\partial_{\gl_i}I}\A_{\gl_i}^{\gr_i*} \mid b-ay \in
\gp^r\A_{\gl_i}^{\gr_i*} \text{ for some } r \geq
\partial_{\gl_i}I\\ \text{and for some } y \in \A^{*}
\text{ and } \gp^{-r}(b-ay) (mod\ \ \gp) \text{ is linearly
independent with }\\ \gp^{-\partial_{\gl_i}I}a (mod\ \ \gp) \text{
in } (\mbb{F}_q)^{\gr_i}\}$
\end{itemize}
In case $2$, if $(\partial_{\gl_i}I,\gl_i) \nin \max(I)$
\begin{itemize}
\item $\{(a,b) \in \gp^{\partial_{\gl_i}I}\A_{\gl_i}^{\gr_i} \times
\gp^{\partial_{\gl_i}I}\A_{\gl_i}^{\gr_i}\}$(The total product set)\\
$\textbf{ OR }$\\
\item $\{(a,b) \in \gp^{\partial_{\gl_i}I}\A_{\gl_i}^{\gr_i} \times
\gp^{\partial_{\gl_i}I}\A_{\gl_i}^{\gr_i} \mid b-ay \in \gp^r\A_{\gl_i}^{\gr_i} \text{
for some } r > \partial_{\gl_i}I\\ \text{ and for some } y \in \A^{*}\}$\\
$\textbf{ OR }$\\
\item $\{(a,b) \in \gp^{\partial_{\gl_i}I}\A_{\gl_i}^{\gr_i} \times
\gp^{\partial_{\gl_i}I}\A_{\gl_i}^{\gr_i} \mid b-ay \in
\gp^r\A_{\gl_i}^{\gr_i*} \text{ for some } r \geq \partial_{\gl_i}I\\
\text{ and for some } y \in \A^{*}\}$
\end{itemize}
Here we note that in each case the values $r,y$ are fixed in the description.
The total product set does not arise in case $1$.
In the description of the sets in second and third parts of case $2$
for some fixed unit $y\in R^{*}$
\begin{itemize}
\item Part(2):The difference $b-ay$ should have at least a fixed higher $\gp-$valuation $r$ than $\partial_{\gl_i}I$
which is a combinatorial set.
\item Part(3):The difference $b-ay$ can have exact $\gp-$valuation $\partial_{\gl_i}I$ but then by definition of valuation of
a vector it will be a multiple of a vector which has a unit in a component along with the exact $\gp-$valuation
less than $\gr_i$ which is a again a combinatorial set. Otherwise it is a zero vector.
\end{itemize}

As an example we have seen in the single component case or cyclic
module case in Theorem~\ref{theorem:SingleComponentCase}, a
description of the transitive sets in $\A_{n}^{k*} \times
\A_{n}^{k*}$ for the diagonal action of $\autgp$.

\section{\bf{Description of Orbit of Pairs for an Ideal}}
\label{sec:OrbitOfPairs} Let $I \in J(\mcl{P})_{\gl}$ be an ideal.
Let the orbit $\grpp^{I^*}$ corresponding to the  ideal $I \in
J(\mcl{P})_{\gl}$ be the box set of the form:
\begin{equation}
\label{eq:Idealboxset} \us{(\partial_{\gl_l} I,\gl_l) \nin
\max(I)}{\prod}\gp^{\partial_{\gl_l} I}\A_{\gl_l}^{\gr_l}\times
\us{\us{(\partial_{\gl_{i_j}} I=s_{i_j},\gl_{i_j}) \in
\max(I)}{j=1}}{\os{t}{\prod}}\gp^{s_{i_j}}\A_{\gl_{i_j}}^{\gr_{i_j}*}
\end{equation}
where $t$ is the cardinality of max elements corresponding to the
ideal $I$ i.e. the cardinality of $\max(I)$. The following are two
basic observations that are needed to state the following theorem.

For simplicity of notation, for a positive integer $k$, define $e^+(k)$ to be the $k$-tuple
$(1,0,....,0)$ and $f^+(k)$ to be the $k$-tuple $(1,1,\ldots,1)$. Then, we observe:
\begin{obs}
\label{obs:CharFormElement}
(Characteristic forms of an element)

(a) By applying a sequence of automorphisms of $\grpp$ we can reduce
any element to a unique characteristic form
\begin{equation*}
(\gp^{r_{1}}e^+(\gr_1),\gp^{r_{2}}e^+(\gr_2),\ldots,\gp^{r_{i}}e^+(\gr_i),\ldots,\gp^{r_{k}}e^+(\gr_k))^{tr}
\end{equation*}
such that $r _{i+1} \leq r _i \leq r _{i+1} + \gl _i - \gl _{i+1} \ \text{ for all } \ i$.

(b) For the abelian group $\grpp$ we can reduce any element to an alternative characteristic form namely
\begin{equation*}
(\gp^{r_{1}}f^+(\gr_1),\gp^{r_{2}}f^+(\gr_2),\gp^{r_{3}}f^+(\gr_3),\ldots,\gp^{r_{k}}f^+(\gr_k))^{tr}
\end{equation*}
such that $r_{i+1} \leq r_i \leq r_{i+1} + \gl_i - \gl_{i+1} \
\text{ for all } \ i$. (See Birkhoff~\cite{GarrettBirkhoff01011935}
and Miller~\cite{1905}). Here $tr$ represents transpose of a vector.
\end{obs}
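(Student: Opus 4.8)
The plan is to prove part (a) first by a normalization argument exploiting the action of $\autgp$ component-by-component, and then deduce part (b) by a further change of basis within each isotypic block. For part (a), I would start with an arbitrary element $m=(m_1,\dots,m_k)^{tr}$, where $m_i\in\A_{\gl_i}^{\gr_i}$. Inside the isotypic block $(\AAA{\gl_i})^{\gr_i}$ the automorphism group $GL_{\gr_i}(\AAA{\gl_i})$ acts transitively on the set of vectors of a given $\gp$-valuation $r_i=\min_{t}val_\gp((m_i)_t)$, so we may apply such an automorphism in each block to bring $m_i$ to the form $\gp^{r_i}e^+(\gr_i)=(\gp^{r_i},0,\dots,0)$. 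This shows every element is automorphic to one of the claimed shape. It remains to check the inequalities $r_{i+1}\le r_i\le r_{i+1}+\gl_i-\gl_{i+1}$ and uniqueness. The inequalities come from Theorem~\ref{theorem:ideals-homomorphisms} (or directly from the antichain condition defining $\mcl{P}$): the pairs $(r_i,\gl_i)$ occurring with nonzero $m_i$ must, after reduction to a canonical boundary shape, be comparable in the appropriate way; concretely, a homomorphism $\AAA{\gl_i}\to\AAA{\gl_{i+1}}$ sends $\gp^{r_i}$ to an element of valuation $\ge r_i-(\gl_i-\gl_{i+1})$ while a homomorphism the other way forces $r_{i+1}\le r_i$, and consistency of these images with the orbit data $I(m)$ pins down the stated range. (When $m_i=0$ one interprets $r_i=\infty$, or equivalently drops that block from the constraints and then chooses the minimal consistent finite value; this is the standard convention and I would spell it out.)

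For uniqueness of the tuple $(r_1,\dots,r_k)$ I would argue that the $r_i$ are determined by the order ideal $I(m)\in J(\mcl{P})_\gl$, which is an automorphism invariant by Theorem~\ref{theorem:ideals-and-orbits}: indeed $r_i=\partial_{\gl_i}I(m)$ is exactly the boundary valuation from equation~\eqref{eq:Boundary}, read off block-by-block via Theorem~\ref{theorem:description-of-orbits}. Two characteristic forms with the same $I(m)$ therefore have the same boundary sequence, hence are literally equal; and conversely any element with a given $I(m)$ reduces to the unique form with $r_i=\partial_{\gl_i}I(m)$. So the characteristic form is a genuine set of orbit representatives, refining the canonical forms $e(I)$ of Theorem~\ref{theorem:orbit-par} (which only record the maximal elements).

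For part (b), the only change is to replace $e^+(\gr_i)=(1,0,\dots,0)$ by $f^+(\gr_i)=(1,1,\dots,1)$ within each block. Over the abelian group $\grpp$ — equivalently over the ring $\AAA{\gl_i}$, since $GL_{\gr_i}(\AAA{\gl_i})$ acts — there is an invertible matrix carrying $(1,0,\dots,0)^{tr}$ to $(1,1,\dots,1)^{tr}$ (the two vectors have the same $\gp$-valuation and a unit in some coordinate, so they are in the same $GL_{\gr_i}(\AAA{\gl_i})$-orbit), and applying it blockwise converts the form of part (a) into the form of part (b) without disturbing the exponents $r_i$. Hence the same inequalities and the same uniqueness statement hold verbatim; this is the classical Birkhoff--Miller normal form, and I would cite \cite{GarrettBirkhoff01011935,1905} at this point.

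The main obstacle I anticipate is not the blockwise transitivity (which is routine $GL$-orbit bookkeeping over a local ring) but rather the careful handling of the inequality constraints and the zero blocks: getting the bookkeeping exactly right so that the correspondence $m\mapsto(r_1,\dots,r_k)\leftrightarrow\partial I(m)$ is a clean bijection requires matching the definition of the partial order on $\mcl{P}$ against the valuations of homomorphic images, and treating $r_i=\infty$ consistently. Once that dictionary is in place, uniqueness is immediate from the invariance of $I(m)$ and everything else is formal.
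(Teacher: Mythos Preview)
The paper does not prove this observation; it states it and cites Birkhoff~\cite{GarrettBirkhoff01011935} and Miller~\cite{1905}. So there is no in-paper proof to compare against, and the question is whether your argument stands on its own.

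There is a genuine gap in your reduction step. You propose to act block-diagonally by $GL_{\gr_i}(\AAA{\gl_i})$ in each isotypic component, setting $r_i=\min_t val_\gp((m_i)_t)$. This does bring each block to the shape $\gp^{r_i}e^+(\gr_i)$, but the resulting tuple $(r_1,\dots,r_k)$ need \emph{not} satisfy $r_{i+1}\le r_i$. Take $\gl=(2,1)$ with $\gr_1=\gr_2=1$ and $m=(1,0)$: block-wise normalization leaves $m$ unchanged with $r_1=0$ and $r_2=1$ (or $\infty$), violating $r_2\le r_1$. The correct characteristic form is $(1,1)$ with $r_1=r_2=0$, and it is reached only via an off-diagonal automorphism such as $\left(\begin{smallmatrix}1&0\\1&1\end{smallmatrix}\right)$. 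In general your block-wise valuation equals $\partial_{\gl_i}I(m)$ only when $(\partial_{\gl_i}I,\gl_i)\in\max I(m)$; at non-maximal blocks you obtain a strictly larger valuation (possibly $\infty$), and the lower-triangular part of $\autgp$ is needed to fill those coordinates in from the maximal ones.

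Your later identification $r_i=\partial_{\gl_i}I(m)$ is the right invariant, and once one knows that the element with those exponents lies in the orbit, uniqueness follows exactly as you say from the invariance of $I(m)$. What is missing is precisely the passage from the canonical form $e(I)$ (zeros in non-maximal blocks) to the characteristic form ($\gp^{\partial_{\gl_i}I}$ in every block) by an explicit automorphism. This step is not hard---the relevant off-diagonal entries exist exactly because the boundary inequalities $r_{i+1}\le r_i\le r_{i+1}+\gl_i-\gl_{i+1}$ hold---but it is the actual content of the observation, and your purely block-diagonal argument skips it.
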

\begin{obs}
\label{obs:Bijection} Let $G$ be a group. Let $X,Y$ be two $G$-sets.
Let $x_i \in X_i$ and $y_j \in Y_j$ be a collection of
representatives in their transitive subset-partitions $X =
\us{i}{\bigsqcup} X_i$ and $Y = \us{j}{\bigsqcup} Y_j$ respectively.
Let $G_{x_i}$ and $G_{y_j}$ be their stabilizer subgroups in $G$
respectively. Then there are natural bijections among the following.
\begin{itemize}
\item The set of $G$ orbits in $X\times Y$ under the diagonal action of $G$ which contain
$x_i$ in the first coordinate for some element in those orbits with the set of $G_{x_i}$ orbits in $Y$
\item The set of $G$ orbits in $X\times Y$ under the diagonal action of $G$ which contain
$y_j$ in the second coordinate for some element in those orbits with the set of $G_{y_j}$ orbits in $X$
\item The set of double cosets of $G_{x_i}$ and $G_{y_j}$ in $G$ with the
$G$\nobreakdash-orbits of $X_i \times Y_j$.
\item The set of double cosets of $G_{x_i}$ and $G_{y_j}$ in $G$ for all $i,j$
with the $G$\nobreakdash-orbits of $X \times Y$
\end{itemize}
\end{obs}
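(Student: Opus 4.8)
The plan is to establish the first bijection carefully — between $G$-orbits in $X \times Y$ meeting $\{x_i\} \times Y$ and $G_{x_i}$-orbits in $Y$ — and then obtain the remaining ones by symmetry and by the standard orbit/double-coset dictionary. First I would fix $x_i$ and define a map $\Phi$ from $G_{x_i}\backslash Y$ to the set of $G$-orbits in $X \times Y$ by sending the $G_{x_i}$-orbit of $y \in Y$ to the $G$-orbit of the pair $(x_i, y)$. To see this is well defined, if $y' = h\cdot y$ with $h \in G_{x_i}$, then $h\cdot(x_i,y) = (x_i, y')$, so $(x_i,y)$ and $(x_i,y')$ lie in the same $G$-orbit. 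The image clearly consists precisely of those $G$-orbits containing some element with first coordinate $x_i$, since any such orbit contains $(x_i, y)$ for some $y$ and is therefore $\Phi(G_{x_i}\cdot y)$.

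Next I would check injectivity. Suppose $\Phi(G_{x_i}\cdot y) = \Phi(G_{x_i}\cdot y')$, i.e.\ $(x_i, y') = g\cdot(x_i, y) = (g\cdot x_i, g\cdot y)$ for some $g \in G$. Then $g\cdot x_i = x_i$, so $g \in G_{x_i}$, and $y' = g\cdot y$ with $g \in G_{x_i}$, whence $G_{x_i}\cdot y = G_{x_i}\cdot y'$. This gives the first bijection. The second bijection is the mirror image: fix $y_j \in Y$ and send the $G_{y_j}$-orbit of $x \in X$ to the $G$-orbit of $(x, y_j)$; the same argument with the roles of the coordinates exchanged applies verbatim.

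For the third bijection, I would restrict attention to $X_i \times Y_j$ (a single $G$-orbit times a single $G$-orbit, hence a $G$-set on which $G$ acts with all of $X_i$ appearing in the first coordinate and all of $Y_j$ in the second). By the first bijection applied with $Y$ replaced by $Y_j$, the $G$-orbits in $X_i \times Y_j$ correspond to $G_{x_i}\backslash Y_j$; and since $Y_j = G/G_{y_j}$ as a $G$-set (via $g\cdot G_{y_j} \mapsto g\cdot y_j$), we have $G_{x_i}\backslash Y_j \cong G_{x_i}\backslash G / G_{y_j}$, the set of double cosets. The fourth bijection follows by summing over all pairs $(i,j)$: the $G$-orbits of $X \times Y$ partition according to which transitive piece $X_i$ their first coordinates lie in and which piece $Y_j$ their second coordinates lie in, since $X = \bigsqcup_i X_i$ and $Y = \bigsqcup_j Y_j$ are $G$-stable decompositions, so $X \times Y = \bigsqcup_{i,j} (X_i \times Y_j)$ is a decomposition into $G$-stable subsets and the orbits of $X\times Y$ are exactly the union over $(i,j)$ of the orbits of $X_i \times Y_j$.

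There is no real obstacle here — the statement is a packaging of Burnside's orbit/stabilizer bookkeeping — but the one point requiring mild care is the reduction in the third bijection: one must note that the $G$-orbits of the whole of $X_i \times Y_j$ are the same as its $G$-orbits regarded internally, and that every one of them meets $\{x_i\}\times Y_j$ (because the first coordinates sweep out all of $X_i = G\cdot x_i$), so that the first bijection genuinely applies with $Y_j$ in place of $Y$. Once that is observed, identifying $Y_j$ with the coset space $G/G_{y_j}$ and hence $G_{x_i}\backslash Y_j$ with the double coset space $G_{x_i}\backslash G/G_{y_j}$ is routine.
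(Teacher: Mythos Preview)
Your proof is correct and complete. The paper itself states Observation~\ref{obs:Bijection} without proof, treating it as a standard fact about group actions, so there is no argument in the paper to compare against; your write-up supplies exactly the routine orbit--stabilizer and double-coset bookkeeping that justifies it.
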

The theorem below describes transitive sets in similar orbit of pairs.
Recall from observation $24$ that each element of $R_{\gl}$ can be transformed by automorphisms to a characteristic form. For a positive integer $k$,
let us denote by $e^+(k)$,the $k$-tuple $(1,0,....,0)$ and, for $k>1$, by $e^-(k)$, the $(k-1)$-tuple $(1,0,....,0)$. Then, we state our main theorem
in four parts, the first of which is:
\begin{theorem}
\label{theorem:CanonicalSimilarPair} Let $I\in J(\mcl{P})_{\gl}$ be
an ideal with
\begin{equation*}
  \max I = \{(s_{i_j},\gl_{i_j})\mid j = 1 \text{ to } t\}.
\end{equation*}
Denote by $\gl^{'}$ and $\gl^{''}$ be the partitions associated to the
finite modules which arise in the
decomposition(~\ref{eq:decomposition}) of $\grpp$ with respect to
the ideal $I$. Consider any $\autgp^I-$orbit $\mcl{O}_{m',J,K} = \{(l',l'') \in \grppp \oplus
\grpppp = \grpp \mid l' \in m' + \grppp^{J\cup K},l'' \in
\grpppp^{{K}^*}+\grpppp^{J}, I(\bar m')=J \} \subs \grpp^{I^*} \subs
\grpp$. Let $x\in \mcl{O}_{[J\cup
K]_{\gl^{''}}} \subs \grpppp$ be the characteristic element (refer
part $(a)$ of Observation~\ref{obs:CharFormElement}) given by
\begin{equation*}
x = (\gp^{r_{1}}e^{\pm}(\gr_1),\gp^{r_{2}}e^{\pm}(\gr_2),\ldots,\gp^{r_{k}}e^{\pm}(\gr_k))^{tr} \in \mcl{O}_{[J\cup K]_{\gl^{''}}} \subs R_{\gl^{''}}
\end{equation*}
(tr for Transpose) which has the property that $x$ excludes the coordinates that occur
in $\grppp$ and if $\gr_i > 1$ then $r_i =
\partial_{\gl_i}([J\cup K]_{\gl^{''}})$.

Then there exists $y = (y_{i_1},y_{i_2},y_{i_3},\ldots,y_{i_t}) \in
(\A^*)^{tr}$ such that (after permuting coordinates with respect to the decompostion of
$\gl=\gl'\oplus \gl''$)
\begin{equation*}
    ((\gp^{s_{i_1}}y_{i_1},\gp^{s_{i_2}}y_{i_2},\ldots,\gp^{s_{i_t}}y_{i_t}),x) \in O_{m',J,K} \subs \grpp^{I^*}
\end{equation*}
and the transitive set $\mcl{O}$ of $\grpp^{I^*} \times \grpp^{I^*}$ corresponding to
$\mcl{O}_{m',J,K}$ (refer Observation~\ref{obs:Bijection}) is
\begin{equation*}
\mcl{O} = \{(\ul{a},\ul{b})=(ge=ge(I),gf)\in \grpp^{I^*} \times
\grpp^{I^*} \subs \grpp\times\grpp\mid g \in \autgp\}.
\end{equation*}
\end{theorem}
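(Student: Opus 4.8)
The plan is to unwind the statement: it is really a concrete translation of Observation~\ref{obs:Bijection} combined with Theorem~\ref{theorem:sub-orbit} (the $\autgp^I$-orbit description) into the language of characteristic forms. So the whole task is (i) produce the claimed representative pair $((\gp^{s_{i_1}}y_{i_1},\dotsc),x)$ inside $O_{m',J,K}$, and (ii) verify that the $\autgp$-saturation of that single pair is exactly the set $\mcl{O}$ displayed, which in turn amounts to identifying $O_{m',J,K}$ (a stabilizer orbit) with a transitive subset of $\grpp^{I^*}\times\grpp^{I^*}$ via the map $l\mapsto(e(I),l)$.

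First I would set up the dictionary. By Observation~\ref{obs:Bijection} applied with $G=\autgp$, $X=Y=\grpp^{I^*}$, and base point $e=e(I)$, the $\autgp$-orbits in $\grpp^{I^*}\times\grpp^{I^*}$ whose first coordinate can be taken to be $e(I)$ are in bijection with the $\autgp^I$-orbits in $\grpp^{I^*}$, the bijection sending an $\autgp^I$-orbit $\mcl{O}'\subs\grpp^{I^*}$ to $\{(ge(I),gl)\mid g\in\autgp,\ l\in\mcl{O}'\}$, equivalently $\{(g e(I), g f)\mid g\in \autgp\}$ once a single $f\in\mcl{O}'$ is chosen. Since Theorem~\ref{theorem:sub-orbit} tells us the $\autgp^I$-orbit of any $m=(m',m'')$ is precisely $\mcl{O}_{m',J,K}$ with $J=I(\bar m')$, $K=I(m'')$, this already gives the last displayed equation for $\mcl{O}$, provided we exhibit one element $f=(\ul b)\in\mcl{O}_{m',J,K}$; the canonical choice is $f=(\,(\gp^{s_{i_1}}y_{i_1},\dotsc,\gp^{s_{i_t}}y_{i_t})\,,x)$, and we must check it lies in $\mcl{O}_{m',J,K}$.

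Next I would construct $x$ and $y$. For $x$: the second component $l''$ ranges over $\grpppp^{K^*}+\grpppp^{J}=\grppppp^{[J\cup K]_{\gl^{''}}{}^{*}}$ (using equation~(\ref{eq:Char-Submodule})/(\ref{eq:M_I}) and Theorem~\ref{theorem:Ideal-CharSubModules} to see the sum of a characteristic orbit and a characteristic submodule is the orbit for the join ideal cut down to $\gl''$), and by part~(a) of Observation~\ref{obs:CharFormElement} this orbit contains a characteristic element of the stated shape $x=(\gp^{r_1}e^{\pm}(\gr_1),\dotsc)^{tr}$ with $r_i=\partial_{\gl_i}([J\cup K]_{\gl''})$ whenever $\gr_i>1$; the superscript $e^{\pm}$ is forced because the coordinates living in $\grppp$ are excised. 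For $y$: the first component condition is $l'\in m'+\grppp^{J\cup K}$ and $l'$ must lie in $\grpp^{I^*}$, i.e.\ on each maximal summand $\gl_{i_j}$ the valuation is exactly $s_{i_j}$; since $e(I)'$ already realizes those exact valuations and $\grppp^{J\cup K}$ only perturbs by strictly larger valuations (as $J\cup K\sbq I$ on the relevant coordinates but the maximal ones are hit with equality), one may take $l'=e(I)'$ scaled coordinatewise by units, which is exactly the freedom encoded by $y=(y_{i_1},\dotsc,y_{i_t})\in(\A^*)^{t}$; that $I(\overline{l'})=J$ is arranged by choosing $m'$ in its $\autgp^I$-orbit to have this characteristic form, using Theorem~\ref{theorem:ideals-and-orbits}. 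Putting the two components together gives the asserted pair in $O_{m',J,K}$.

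The main obstacle, I expect, is the bookkeeping around the decomposition $\gl=\gl'\oplus\gl''$ and the ``after permuting coordinates'' clause: one must be careful that $[J\cup K]_{\gl''}$ is the correct ideal (the join $J\cup K$ in $J(\mcl{P})_\gl$, then saturated/restricted so its maximal elements lie over $\gl''$), that the boundary valuations $\partial_{\gl_i}([J\cup K]_{\gl''})$ agree with the valuations forced by $\grpppp^{K^*}+\grpppp^{J}$, and that no coordinate of $\grppp$ is accidentally reintroduced into $x$. A second, milder subtlety is verifying that the pair does not accidentally land in a \emph{different} transitive subset — but this is automatic from Observation~\ref{obs:Bijection}, since the bijection there is established abstractly and our pair has first coordinate in the $\autgp$-orbit of $e(I)$ by construction. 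Once the identification $O_{m',J,K}\leftrightarrow\mcl{O}$ is nailed down, the final display is just the definition of the saturation $\{(ge(I),gf)\mid g\in\autgp\}$, and there is nothing more to prove.
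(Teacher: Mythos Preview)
Your overall strategy is right --- reduce to Observation~\ref{obs:Bijection} and exhibit a single representative $f$ --- and the handling of the $\grpppp$-component $x$ is fine. But the construction of the $\grppp$-component has a genuine gap.

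You assert that ``$\grppp^{J\cup K}$ only perturbs by strictly larger valuations'' and that therefore one may take $l'$ to be $e(I)'$ scaled coordinatewise by units. This is backwards. The element $m'$ is \emph{given} (it fixes which $\autgp^I$-orbit we are describing), and its $i_l$-th coordinate $m'_{i_l}$ may have valuation $n_{i_l}$ \emph{strictly greater} than $s_{i_l}$. The point is that $\grppp$ contains only the first of the $\gr_{i_l}$ copies of $\A_{\gl_{i_l}}$; the exact valuation $s_{i_l}$ required for membership in $\grpp^{I^*}$ can be carried by one of the remaining $\gr_{i_l}-1$ copies, all of which lie in $\grpppp$ and hence contribute to $m''$, not $m'$. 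In that situation you cannot reach an element of valuation $s_{i_l}$ in the coset $m'_{i_l}+\gp^{\partial_{\gl_{i_l}}([J\cup K]_{\gl'})}\A_{\gl_{i_l}}$ if the perturbation is by strictly higher valuation; you need $\grppp^{J\cup K}$ to contain elements of valuation exactly $s_{i_l}$ in that coordinate.

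The paper's proof supplies precisely this missing step. If $n_{i_l}>s_{i_l}$ then necessarily $\gr_{i_l}>1$ (otherwise the single coordinate would be forced to valuation $s_{i_l}$), and the exact valuation $s_{i_l}$ is then achieved inside $m''$. This forces the chain of equalities
\[
s_{i_l}=r_{i_l}=\partial_{\gl_{i_l}}([J\cup K]_{\gl''})=\partial_{\gl_{i_l}}([J\cup K]_{\gl})=\partial_{\gl_{i_l}}([J\cup K]_{\gl'}),
\]
so that $\grppp^{J\cup K}$ in the $i_l$-th coordinate is all of $\gp^{s_{i_l}}\A_{\gl_{i_l}}$, and one can modify $m'_{i_l}$ within its coset to an element $\tilde m'_{i_l}=\gp^{s_{i_l}}y_{i_l}$ with $y_{i_l}$ a unit. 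If $n_{i_l}=s_{i_l}$ already, $m'_{i_l}$ itself has this form. Either way one obtains $\tilde m'=(\gp^{s_{i_1}}y_{i_1},\dotsc,\gp^{s_{i_t}}y_{i_t})$ with all $y_{i_l}\in\A^*$, and $f=\tilde m'\oplus x\in\mcl{O}_{m',J,K}$ as required. Your proposal skips this valuation-lowering argument entirely.
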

\begin{proof}
Consider the transitive set $\mcl{O}_{m',J,K} \subs \grpp^{I^*}$.
Let $m' = (m_{i_1}',m_{i_2}',\ldots,m_{i_t}') =
(\gp^{n_{i_1}}y_{i_1}',\gp^{n_{i_2}}y_{i_2}',\ldots,\gp^{n_{i_t}}y_{i_t}')$
where $val(m_{i_l}') = n_{i_l}$. We modify $m'$ as follows. First of
all, using Theorems~\ref{theorem:stabilizer}
and~\ref{theorem:sub-orbit}, we note that $m_{i_l}'$ can be changed
to any new element $\ti{m}_{i_l}$ in the coset $m_{i_l}' +
\gp^{\partial_{\gl_{i_l}}([J]_{\gl^{'}}\cup
[K]_{\gl^{'}})}\A_{\gl_{i_l}}$. Here we note that if $n_{i_l} >
s_{i_l}$ then $\gr_{i_l}
> 1$. This is because if $\gr_{i_l}=1$ then since $(s_{i_l},\gl_{i_l}) \in max(I)$ this coordinate should have exact valuation equal to $s_{i_l}$ for all
the elements of the orbit corresponding to $I$. So $\gr_{i_l}>1$.
Now again since $n_{i_l}>s_{i_l}$ and the valuation $s_{i_l}$ must
occur somewhere again because $(s_{i_l},\gl_{i_l}) \in max(I)$. So we should have $s_{i_l} = r_{i_l} =
\partial_{\gl_{i_l}}([J]_{\gl^{''}}\cup [K]_{\gl^{''}})=\partial_{\gl_{i_l}}([J\cup K]_{\gl^{''}})=\partial_{\gl_{i_l}}([J\cup K]_{\gl})=\partial_{\gl_{i_l}}([J\cup K]_{\gl^{'}})=
\partial_{\gl_{i_l}}([J]_{\gl^{'}}\cup [K]_{\gl^{'}})$. So we
modify $m_{i_l}'$ in this coset to get a new element $\ti{m}_{i_l}'$
with valuation $s_{i_l}$. So $\ti{m}' =
(\ti{m}_{i_1}',\ti{m}_{i_2}',\ldots,\ti{m}_{i_t}') =
(\gp^{s_{i_1}}y_{i_1},\gp^{s_{i_2}}y_{i_2},\ldots,\gp^{s_{i_t}}y_{i_t})$
for some unit vector $y =
(y_{i_1},y_{i_2},y_{i_3},\ldots,y_{i_t}) \in (\A^*)^{tr}$. So if we can
take $f = \ti{m}' \oplus x \in \grppp \oplus \grpppp = \grpp$ then
$f \in \mcl{O}_{m',J,K} \subs \grpp^{I^*}$ and $\mcl{O} =
\{(\ul{a},\ul{b})=(ge=ge(I),gf)\in \grpp^{I^*} \times \grpp^{I^*}
\subs \grpp\times\grpp\mid g \in \autgp\}$ is the transitive set
corresponding to the $\autgp^I$-transitive set $\mcl{O}_{m',J,K}$ by
Observation~\ref{obs:Bijection}.
\end{proof}
Using the notation of the previous Theorem~\ref{theorem:CanonicalSimilarPair} let
\begin{equation*}
\mcl{O} = \{(\ul{a},\ul{b})=(ge=ge(I),gf)\in \grpp^{I^*} \times
\grpp^{I^*} \subs \grpp\times\grpp\mid g \in \autgp\}
\end{equation*}
be the transitive set corresponding to the $\autgp^I$-transitive set $\mcl{O}_{m',J,K}$.
Let $S = \{i_1,i_2,\ldots,i_t\}$ where $\max I=\{(s_{i_l},\gr_{i_l})\in J(\mcl{P})_{\gl}\mid l=1,2,\ldots,t\}$.
Let
\begin{equation*}
\begin{aligned}
m_{i_l} =& min\bigg(
\us{j<i_l,j\in S}{min}(s_j+val(y_j-y_{i_l})),\\
& \us{j>i_l,j\in S}{min}(\gl_{i_l}-\gl_j+s_j+val(y_j-y_{i_l})),\\
& \us{j<i_l,j \in S,\gr_j>1}{min}(r_j), \us{j<i_l,j \nin S}{min}(r_j),\\
& \us{j>i_l,j \in S,\gr_j>1}{min}(\gl_{i_l}-\gl_j+r_j), \us{j>i_l,j \nin S}{min}(\gl_{i_l}-\gl_j+r_j)\bigg)
\end{aligned}
\end{equation*}
Again for simplicity of notation for an integer $k>0, x,y \in \AAA {\gm}$
define $t(x,y;k)$ to be the k-tuple $(x,y,0,...,0)$.
\begin{theorem}
\label{theorem:GeneralOrbitPairMax}
With these notations we have
\begin{itemize}
\item $a_{i_l},b_{i_l} \in (\gp^{s_{i_l}}\A_{\gl_{i_l}}^{\gr_{i_l}*})$.
\item $m_{i_l} > s_{i_l}$ and exactly one of the following holds.
\begin{enumerate}[label=\Alph*$\bm{.}$]
\item   $(b_{i_l}-y_{i_l}a_{i_l}) \in \gp^{m_{i_l}}\A_{\gl_{i_l}}^{\gr_{i_l}}$ if
$m_{i_l} \leq r_{i_l}, \gr_{i_l} > 1$ or if $\gr_{i_l} = 1$.
\item   $(b_{i_l}-y_{i_l}a_{i_l}) \in \gp^{r_{i_l}}\A_{\gl_{i_l}}^{\gr_{i_l}}$ if
$m_{i_l} > r_{i_l}, \gr_{i_l} > 1$ and $\gp^{-r_{i_l}}(b_{i_l}-y_{i_l}a_{i_l}) (mod\ \ \gp)$
is linearly independent with $\gp^{-s_{i_l}}a_{i_l} (mod\ \ \gp)$ in $\mbb{F}_q^{\gr_{i_l}}$.
\end{enumerate}
\end{itemize}
\end{theorem}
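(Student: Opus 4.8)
The plan is to unwind the bijective correspondence of Observation~\ref{obs:Bijection} so that describing the transitive set $\mcl{O}\subs\grpp^{I^*}\times\grpp^{I^*}$ amounts to describing the $\autgp^I$-orbit $\mcl{O}_{m',J,K}$, which we already control via Theorem~\ref{theorem:sub-orbit}. Concretely, an element $(\ul a,\ul b)\in\mcl{O}$ is of the form $(ge(I),gf)$ with $f=\ti m'\oplus x$ as in the proof of Theorem~\ref{theorem:CanonicalSimilarPair}. Since $g$ fixes nothing special but $\ul a=ge(I)$ is an arbitrary element of $\grpp^{I^*}$, the pair $(\ul a,\ul b)$ is determined up to $\autgp$ by the $\autgp^I$-orbit of $f$. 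So the work is: (i) restrict attention to the summands indexed by $i_l\in S$, where $(s_{i_l},\gl_{i_l})\in\max(I)$, reading off the $i_l$-component of $\ul a$ and $\ul b$; (ii) translate the two membership conditions $l'\in m'+\grppp^{J\cup K}$ and $l''\in\grpppp^{K^*}+\grpppp^{J}$ of Theorem~\ref{theorem:sub-orbit} into valuative conditions on $b_{i_l}-y_{i_l}a_{i_l}$.

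First I would fix the canonical representative: take $\ul a=e(I)$ itself (so $a_{i_l}=\gp^{s_{i_l}}e^{\pm}(\gr_{i_l})$) and $\ul b=f$, so that $b_{i_l}-y_{i_l}a_{i_l}$ is exactly the $i_l$-component of $f-y\cdot e(I)$, whose valuation behaviour is governed by the characteristic element $x\in\mcl{O}_{[J\cup K]_{\gl''}}$ together with the coset modifications in $\grppp$. The quantity $m_{i_l}$ is precisely the minimum, over all summands $j$ that ``feed into'' the $i_l$-component under an automorphism $g\in\autgp^I$ (equivalently: the homomorphisms $\grpp\to\AAA{\gl_{i_l}}$ available from Theorem~\ref{theorem:ideals-homomorphisms}, organized by whether $j<i_l$ or $j>i_l$, whether $j\in S$, and whether $\gr_j>1$), of the valuation each such summand can contribute to $b_{i_l}-y_{i_l}a_{i_l}$. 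So the inequality $m_{i_l}>s_{i_l}$ is the statement that none of these contributions can lower the valuation below $s_{i_l}+1$ — which is forced because every $r_j\ge s_j$ and the $s_j$ satisfy the antichain/boundary constraints of $\max(I)$; I would verify this by a direct case check on the six terms in the definition of $m_{i_l}$, using $\partial_{\gl_{i_l}}([J\cup K]_{\gl})=s_{i_l}$ from the proof of Theorem~\ref{theorem:CanonicalSimilarPair}.

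Next, for the dichotomy A vs.\ B: the component $b_{i_l}-y_{i_l}a_{i_l}$ lies in $\gp^{m_{i_l}}\A_{\gl_{i_l}}^{\gr_{i_l}}$ whenever every contributing summand genuinely pushes it to valuation $\ge m_{i_l}$ — this is case A, and it subsumes the case $\gr_{i_l}=1$ (where there is no room for a ``direction'' argument) and the case $m_{i_l}\le r_{i_l}$ (where the characteristic coordinate $\gp^{r_{i_l}}$ in $x$ does not yet force anything finer). When instead $m_{i_l}>r_{i_l}$ and $\gr_{i_l}>1$, the summand contributing $r_{i_l}$ is the characteristic coordinate itself, which sits in a different coordinate slot of $\AAA{\gl_{i_l}}^{\gr_{i_l}}$ than the one carrying $\gp^{-s_{i_l}}a_{i_l}$; reducing mod $\gp$ this gives the linear independence in $\mbb{F}_q^{\gr_{i_l}}$ asserted in case B. I would make this precise by writing $f-y\cdot e(I)$ in coordinates and noting that the $\gp^{r_{i_l}}$ entry of $x$ occupies a slot where $e(I)$ vanishes, so the two reductions are along distinct basis vectors.

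The main obstacle will be step (ii): carefully bookkeeping which homomorphisms $\grppp\to\AAA{\gl_{i_l}}$ and $\grpppp\to\AAA{\gl_{i_l}}$ are permitted inside $\autgp^I$ (via Theorems~\ref{theorem:stabilizer} and~\ref{theorem:sub-orbit}), and checking that the resulting minimum over permitted valuations is exactly $m_{i_l}$ as defined — neither larger (some automorphism achieves it) nor smaller (no automorphism beats it). The ``not smaller'' direction rests on the boundary-valuation inequalities encoding the antichain $\max(I)$ and on the constraint $r_{i+1}\le r_i\le r_{i+1}+\gl_i-\gl_{i+1}$ from Observation~\ref{obs:CharFormElement}; the ``achievable'' direction requires exhibiting, for each of the six terms, an explicit element of the stabilizer realizing that contribution, which is where the split into $j<i_l$ versus $j>i_l$ (hence the factors $\gl_{i_l}-\gl_j$ coming from $\Hom(\AAA{\gl_j},\AAA{\gl_{i_l}})$ when $\gl_j<\gl_{i_l}$) does the real work.
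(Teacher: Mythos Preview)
Your plan has a genuine gap at its starting point. You propose to ``unwind Observation~\ref{obs:Bijection}'' and reduce the description of $\mcl O\subs\grpp^{I^*}\times\grpp^{I^*}$ to the stabilizer orbit $\mcl O_{m',J,K}$, then fix $\ul a=e(I)$ and let $g$ range over $\autgp^I$. But Observation~\ref{obs:Bijection} is a bijection between \emph{sets of orbits}, not between the points of one $\autgp$-orbit and the points of one stabilizer orbit. The theorem asserts that conditions A/B hold for \emph{every} pair $(\ul a,\ul b)\in\mcl O$, that is, for $(\ul a,\ul b)=g(e(I),f)$ with $g$ ranging over all of $\autgp$, not just over $\autgp^I$. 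Restricting to $g\in\autgp^I$ only sweeps out the fibre $\{e(I)\}\times\mcl O_{m',J,K}$; at the basepoint $(e(I),f)$ one finds $b_{i_l}-y_{i_l}a_{i_l}=(0,\gp^{r_{i_l}},0,\ldots,0)$, which has valuation $r_{i_l}$, strictly larger than $m_{i_l}$ in case A. The drop to valuation $m_{i_l}$ comes precisely from the off-diagonal blocks $A_{i_lj}$ of a \emph{general} $g\in\autgp$, which mix the summand $j$ into the $i_l$-component; these blocks are unconstrained in $\autgp$ but are constrained in $\autgp^I$. So your identification of $m_{i_l}$ as a minimum over feedings ``under an automorphism $g\in\autgp^I$'' is the wrong group.

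The paper's argument is more direct and avoids this detour entirely: it writes an arbitrary $g\in\autgp$ as a block matrix, computes $a_{i_l}$ and $b_{i_l}$ explicitly (equations (\ref{eq:A}) and (\ref{eq:B})), subtracts to get $b_{i_l}-y_{i_l}a_{i_l}$ (equation (\ref{eq:C})), and reads off the minimum valuation term by term. The inequalities (\ref{eq:InequalityChain}) coming from the antichain structure of $\max(I)$ give $m_{i_l}>s_{i_l}$, and the dichotomy A/B is then immediate from whether or not the diagonal contribution $A_{i_li_l}\gp^{r_{i_l}}e^-(\gr_{i_l})$ (second column of an invertible matrix, hence independent of the first column $\gp^{-s_{i_l}}a_{i_l}$ modulo $\gp$) dominates. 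Your intuition about $m_{i_l}$ as a minimum over contributing summands is right, and your case analysis for A versus B is on target; but you should carry it out for general $g\in\autgp$ acting on the fixed pair $(e(I),f)$, not for the stabilizer acting on $f$ alone.
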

\begin{proof}
Here we describe the max-components of the elements in the orbit of pairs containing $(e,f)$. Let $g \in \autgp$ be an element described as $g_{mat}$ below.
\begin{equation}
\label{eq:aut}
g_{mat}=
\left(
\begin{array}{cccc}
 A_{11} & A_{12} \gp^{\lambda _1-\lambda _2} & \cdots & A_{1k} \gp^{\lambda _1-\lambda _k} \\
 A_{21} & A_{22} & \cdots & A_{2k} \gp^{\lambda _2-\lambda _k} \\
 \vdots & \vdots & \ddots & \vdots \\
 A_{k1} & A_{k2} & \cdots & A_{kk}
\end{array}
\right)
\end{equation}
where each $A_{ij}$ is a $\gr_i \times \gr_j$ matrix of elements from $\A$ and $det(A_{ii})$ is a unit in $\A$.

Applying $g_{mat}$ we get the following equations for
$(\ul{a},\ul{b})=g_{mat}(e=e(I),f)$ we have for $1\leq l \leq t$
\begin{equation}
\label{eq:A}
\begin{aligned}
a_{i_l} &= \us{j<{i_l},j\in S}{\sum}A_{i_lj}\gp^{s_j}e^+(\gr_j) + A_{i_li_l}\gp^{s_{i_l}}e^+(\gr_{i_l})
+ \us{j>{i_l},j\in S}{\sum}A_{i_lj}\gp^{\gl_{i_l}-\gl_j+s_j}\gp^{s_j}e^+(\gr_j)
\end{aligned}
\end{equation}
\begin{equation}
\label{eq:B}
\begin{aligned}
&b_{i_l} = \us{j<{i_l},j\in S}{\sum}A_{i_lj}\gp^{s_j}t(y_j,\gp^{r_j-s_j};\gr_j)
        + A_{i_li_l}\gp^{s_{i_l}}t(y_{i_l},\gp^{r_{i_l}-s_{i_l}};\gr_{i_l}) +\\
        & \us{j>{i_l},j\in S}{\sum}A_{i_lj}\gp^{\gl_{i_l}-\gl_j+s_j}t(y_j,\gp^{r_j-s_j};\gr_j)
        + \us{j<{i_l},j\nin S}{\sum}A_{i_lj}\gp^{r_j}e^+(\gr_j)+\us{j>{i_l},j\nin S}{\sum}A_{i_lj}\gp^{\gl_{i_l}-\gl_j+r_j}e^+(\gr_j)
\end{aligned}
\end{equation}
We conclude that because $f \in \grpp^{I^*}$ and from the structure of the
box set of $\grpp^{I^*}$ in equation~(\ref{eq:Idealboxset}) we get
that for any $i \in S = \{i_1,i_2,\ldots,i_t\}$
\begin{equation}
\label{eq:InequalityChain}
\begin{aligned}
&s_{i_1} > s_{i_2} > \ldots > s_{i_t}\\
&\gl_{i_1} - s_{i_1} > \gl_{i_2} - s_{i_2} > \ldots > \gl_{i_t} - s_{i_t}\\
&r_j \geq s_j > s_i \text{ for all }j < i, j \nin S \text{ and for all }j < i, j \in S \text{ if }\gr_{j} > 1\\
&\gl_i-\gl_j+r_j \geq \gl_i-\gl_j+s_j > s_i \text{ for all } j > i, j \nin S \text{ and for all } j > i, j \in S \text{ if } \gr_{j} > 1
\end{aligned}
\end{equation}
Hence from equations~(\ref{eq:A}) and (\ref{eq:B}) we have that
$a_{i_l}, b_{i_l} \in \gp^{s_{i_l}}\A_{\gl_{i_l}}^{{\gr_i}^*}$
(which automatically holds because $\ul{a},\ul{b} \in \grpp^{I^*}$).
In addition we also have
\begin{equation}
\label{eq:C}
\begin{aligned}
b_{i_l}-y_{i_l}a_{i_l} &= \us{j<{i_l},j\in S}{\sum}A_{i_lj}\gp^{s_j}t(y_j-y_{i_l},\gp^{r_j-s_j};\gr_j)
+ A_{i_li_l}\gp^{s_{i_l}}t(0,\gp^{r_{i_l}-s_{i_l}};\gr_{i_l})\\
&+\us{j>{i_l},j\in S}{\sum}A_{i_lj}\gp^{\gl_{i_l}-\gl_j+s_j}t(y_j-y_{i_l},\gp^{r_j-s_j};\gr_j)\\
&+\us{j<{i_l},j\nin S}{\sum}A_{i_lj}\gp^{r_j}e^+(\gr_j) + \us{j>{i_l},j\nin S}{\sum}A_{i_lj}\gp^{\gl_{i_l}-\gl_j+r_j}e^+(\gr_j)
\end{aligned}
\end{equation}
If we define $m_{i_l}$ as in Theorem~\ref{theorem:GeneralOrbitPairMax} we conclude that from inequalities(~\ref{eq:InequalityChain}), $m_{i_l} > s_{i_l}$ and also exactly one of the following holds.
\begin{itemize}
\item   $b_{i_l}-y_{i_l}a_{i_l} \in \gp^{m_{i_l}}\A_{\gl_{i_l}}^{\gr_{i_l}}$ if $m_{i_l} \leq r_{i_l}, \gr_{i_l} > 1$ or if $\gr_{i_l} = 1$.
\item   $b_{i_l}-y_{i_l}a_{i_l} \in \gp^{r_{i_l}}\A_{\gl_{i_l}}^{\gr_{i_l}}$ if $m_{i_l} > r_{i_l}, \gr_{i_l} > 1$ and $\gp^{-r_{i_l}}(b_{i_l}-y_{i_l}a_{i_l}) (mod\ \ \gp)$ is linearly independent with $\gp^{-s_{i_l}}a_{i_l} (mod\ \ \gp)$ in $\mbb{F}_q^{\gr_{i_l}}$.
\end{itemize}
\end{proof}

Using the notation of the Theorem~\ref{theorem:CanonicalSimilarPair} let
\begin{equation*}
\mcl{O} = \{(\ul{a},\ul{b})=(ge=ge(I),gf)\in \grpp^{I^*} \times
\grpp^{I^*} \subs \grpp\times\grpp\mid g \in \autgp\}
\end{equation*}
be the transitive set corresponding to the $\autgp^I$-transitive set $\mcl{O}_{m',J,K}$.
Let $S = \{i_1,i_2,\ldots,i_t\}$ where $\max I=\{(s_{i_l},\gr_{i_l})\in J(\mcl{P})_{\gl}\mid l=1,2,\ldots,t\}$.
If $i \nin S$, there exists an $i_l \in S,\partial_{\gl_i}I = s_{i_l}$ or
$\partial_{\gl_i}I = \gl_i-\gl_{i_l}+s_{i_l}$ such that $a_i,b_i \in
\gp^{\partial_{\gl_i}I}\A_{\gl_i}^{\gr_i}$.
Let
\begin{equation*}
\begin{aligned}
        m_i &= min\bigg(\us{j<i,j\in S}{min}(s_j+val(y_j-y_{i_l})),\\
                & \us{j>i,j\in S}{min}(\gl_i-\gl_j+s_j+val(y_j-y_{i_l})),\\
                & \us{j<i,j \in S,\gr_j>1}{min}(r_j), \us{j<i,j \nin S}{min}(r_j),\\
                & \us{j>i,j \in S,\gr_j>1}{min}(\gl_i-\gl_j+r_j), \us{j>i, j \nin S}{min}(\gl_i-\gl_j+r_j)\bigg)
\end{aligned}
\end{equation*}
\begin{theorem}
\label{theorem:GeneralOrbitPairNonMax}
With these notations we have $m_i \geq \partial_{\gl_i}I$ and exactly one of the
following holds.
\begin{enumerate}[label=(\roman*)$\bm{.}$]
\item $b_i-y_{i_l}a_i \in \gp^{m_i}\A_{\gl_i}^{\gr_i}$ if $r_i \geq m_i > \partial_{\gl_i}I$
\item $b_i-y_{i_l}a_i \in (\gp^{\partial_{\gl_i}([J]_{\gl^{''}}\cup [K]_{\gl^{''}})}
      \A_{\gl_i}^{\gr_i*})$
with \\ $m_i > r_i=\partial_{\gl_i}([J]_{\gl^{''}}\cup [K]_{\gl^{''}}) \geq \partial_{\gl_i}I$
\item $(a_i,b_i) \in \gp^{\partial_{\gl_i}I}\A_{\gl_i}^{\gr_i} \oplus
            \gp^{\partial_{\gl_i}I}\A_{\gl_i}^{\gr_i}$ can be any element.
\end{enumerate}
\end{theorem}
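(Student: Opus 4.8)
The plan is to mimic the proof of Theorem~\ref{theorem:GeneralOrbitPairMax} but now track the non-maximal isotypic components. First I would fix $i \nin S$ and use the box-set description~(\ref{eq:Idealboxset}) to identify the index $i_l \in S$ that controls the boundary valuation $\partial_{\gl_i}I$: either $\partial_{\gl_i}I = s_{i_l}$ (when the relevant maximal element sits ``above'' $\gl_i$ in the partition, i.e.\ $i > i_l$) or $\partial_{\gl_i}I = \gl_i - \gl_{i_l} + s_{i_l}$ (when $i < i_l$). This $i_l$ is exactly the one appearing in the definition of $m_i$, and it is the unit $y_{i_l}$ that will pair the $i$-th components $a_i$ and $b_i$. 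Since $\ul{a},\ul{b}\in\grpp^{I^*}$, automatically $a_i,b_i \in \gp^{\partial_{\gl_i}I}\A_{\gl_i}^{\gr_i}$, which gives the ambient box for the $i$-th component.

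Next I would apply a general automorphism $g_{\mathrm{mat}}$ as in~(\ref{eq:aut}) to the pair $(e(I),f)$ and write out the $i$-th row of both $\ul{a}=g_{\mathrm{mat}}e(I)$ and $\ul{b}=g_{\mathrm{mat}}f$, exactly in the style of equations~(\ref{eq:A}) and~(\ref{eq:B}) but with the free index $i$ in place of $i_l$; the key point is that for $i\nin S$ the ``diagonal'' term $A_{ii}\gp^{s_{i_l}}(\dots)$ is absent as a constraint, so no unit is forced on the $i$-th coordinate. Forming $b_i - y_{i_l}a_i$ and collecting terms (the diagonal contribution cancels as in~(\ref{eq:C}), leaving only off-diagonal terms and the $r_i$-term coming from the $x$-part of $f$), each surviving summand carries a $\gp$-valuation bounded below by one of the six quantities in the minimum defining $m_i$. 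An inequality chain analogous to~(\ref{eq:InequalityChain}) — now reading $r_j\ge s_j > s_{i_l}\ge\partial_{\gl_i}I$ for $j<i$ and the mirrored statement for $j>i$, together with $r_i \ge \partial_{\gl_i}([J]_{\gl^{''}}\cup[K]_{\gl^{''}})\ge\partial_{\gl_i}I$ — yields $m_i \ge \partial_{\gl_i}I$.

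Then I would split into the three stated cases according to how $m_i$ compares with $r_i$ and whether the residue-vector $\gp^{-r_i}(b_i-y_{i_l}a_i)$ survives modulo $\gp$. If $\partial_{\gl_i}I < m_i \le r_i$ (and $m_i<\infty$), the difference $b_i - y_{i_l}a_i$ lies in $\gp^{m_i}\A_{\gl_i}^{\gr_i}$, giving case~(i). If $m_i > r_i$ with $r_i = \partial_{\gl_i}([J]_{\gl^{''}}\cup[K]_{\gl^{''}}) \ge \partial_{\gl_i}I$, then the $r_i$-term from $x$ dominates and contributes a genuine unit coordinate (the $e^+$-vector makes $\gp^{-r_i}(b_i-y_{i_l}a_i)$ have a unit entry modulo $\gp$), so $b_i - y_{i_l}a_i \in \gp^{r_i}\A_{\gl_i}^{\gr_i*}$, which is case~(ii); note that here, unlike the maximal case, the residue vector need not be checked for linear independence against $a_i$ because $i\nin S$ imposes no exact-valuation condition on $a_i$, so $a_i$ can already be divisible by a higher power of $\gp$. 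Finally, when neither higher-valuation constraint is present — essentially when $m_i = \partial_{\gl_i}I$ and the relevant off-diagonal $A_{ii_l}$ can be chosen freely — the pair $(a_i,b_i)$ sweeps the full product box $\gp^{\partial_{\gl_i}I}\A_{\gl_i}^{\gr_i}\oplus\gp^{\partial_{\gl_i}I}\A_{\gl_i}^{\gr_i}$, giving case~(iii), and exactly one of the three occurs by construction of $m_i$.

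The main obstacle I anticipate is bookkeeping: verifying that the six-fold minimum defining $m_i$ genuinely captures the worst valuation among all off-diagonal contributions to $b_i - y_{i_l}a_i$, and checking that the choice of $i_l$ is forced (i.e.\ that different admissible $i_l$'s give the same $\partial_{\gl_i}I$ and hence the same pairing unit up to the coset freedom). Establishing that the three cases are mutually exclusive and exhaustive — rather than overlapping — requires carefully matching each regime of the minimum against the structure of the $x$-component and the absence of a unit-constraint on the $i$-th coordinate; once~(\ref{eq:InequalityChain})'s analogue is in hand, the valuation estimates themselves are routine.
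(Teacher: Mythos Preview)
Your plan is essentially the paper's own argument: write out the $i$-th block row of $g_{\mathrm{mat}}(e(I),f)$ for $i\notin S$ (the paper's equations~(\ref{eq:D}) and~(\ref{eq:E})), form $b_i-y_{i_l}a_i$, and read off the valuation constraints to obtain $m_i\ge\partial_{\gl_i}I$ and the trichotomy. The one place where your sketch is looser than the paper is the mechanism that singles out case~(iii). You describe it as ``$m_i=\partial_{\gl_i}I$ and the relevant off-diagonal $A_{ii_l}$ can be chosen freely''; the paper instead introduces an auxiliary index $l_0$ (any index, possibly $l_0=i$, whose $r$-contribution to the $i$-th row hits exactly $\partial_{\gl_i}I$) and does a case split on whether such $l_0$ exists, is unique with $l_0=i$, or admits some $l_0\neq i$. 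It is precisely the presence of an $l_0\neq i$ --- an \emph{extra} off-diagonal block already sitting at the boundary valuation --- that gives enough freedom to sweep the full box, and the paper also isolates the degenerate situation $s_{i_l}=\gl_i-\gl_{i_{l+1}}+s_{i_{l+1}}=\partial_{\gl_i}I$ (two maximal elements both controlling the boundary) as a separate source of case~(iii). Your criterion ``$m_i=\partial_{\gl_i}I$'' is close to this but not literally the same, since $m_i$ also contains the $s_j+\mathrm{val}(y_j-y_{i_l})$ terms and explicitly excludes the $j=i$ term; sharpening your condition to the $l_0$-criterion is exactly the ``bookkeeping obstacle'' you anticipate, and once that is done your argument and the paper's coincide.
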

\begin{proof}
Here we describe the nonmax-components of the elements in the orbit of pairs containing $(e,f)$. Let $g \in \autgp$ be an element described as $g_{mat}$ below.
\begin{equation}
\label{eq:aut1}
g_{mat}=
\left(
\begin{array}{cccc}
 A_{11} & A_{12} \gp^{\lambda _1-\lambda _2} & \cdots & A_{1k} \gp^{\lambda _1-\lambda _k} \\
 A_{21} & A_{22} & \cdots & A_{2k} \gp^{\lambda _2-\lambda _k} \\
 \vdots & \vdots & \ddots & \vdots \\
 A_{k1} & A_{k2} & \cdots & A_{kk}
\end{array}
\right)
\end{equation}
where each $A_{ij}$ is a $\gr_i \times \gr_j$ matrix of elements from $\A$ and $det(A_{ii})$ is a unit in $\A$.

Applying $g_{mat}$ we get the following equations for
$(\ul{a},\ul{b})=g_{mat}(e=e(I),f)$:

we have for $i \nin S = \{i_1 < i_2 <\ldots <i_t\} \subs \{1,2,3,\ldots ,k\}$
\begin{equation}
\label{eq:D}
a_{i} = \us{j<{i},j\in S}{\sum}A_{ij}\gp^{s_j}e^+(\gr_j) + \us{j>{i},j\in S}{\sum}A_{ij}\gp^{\gl_{i}-\gl_j+s_j}e^+(\gr_j)
\end{equation}
\begin{equation}
\label{eq:E}
\begin{aligned}
b_{i} &= \us{j<{i},j\in S}{\sum}A_{ij}\gp^{s_j}t(y_j,\gp^{r_j-s_j};\gr_j) +  \us{j>{i},j\in S}{\sum}A_{ij}\gp^{\gl_{i}-\gl_j+s_j}t(y_j,\gp^{r_j-s_j};\gr_j)\\
&+ \us{j<{i},j\nin S}{\sum}A_{ij}\gp^{r_j}e^+(\gr_j) + A_{ii}\gp^{r_i}e^+(\gr_i) + \us{j>{i},j\nin S}{\sum}A_{ij}\gp^{\gl_{i}-\gl_j+r_j}e^+(\gr_j)
\end{aligned}
\end{equation}
Suppose $i_l < i < i_{l+1}$ if such an $i_l$ and $i_{l+1}$ exist (otherwise either $i < i_{l+1} = i_{1}$ and $i_l$ does not exist
or $i_{l} = i_t < i$ and $i_{l+1}$ does not exist). Then we have the following cases.
\begin{enumerate}[label=\arabic*$\bm{.}$]
\item $s_{i_l} < \gl_i-\gl_{i_{l+1}}+s_{i_{l+1}},\partial_{\gl_i}I = s_{i_l}$
\item $s_{i_l} > \gl_i-\gl_{i_{l+1}}+s_{i_{l+1}},\partial_{\gl_i}I = \gl_i-\gl_{i_{l+1}}+s_{i_{l+1}}$
\item   if $i < i_1 = i_{l+1}, \partial_{\gl_i}I = \gl_i-\gl_{i_{l+1}}+s_{i_{l+1}}$
\item if $i > i_t = i_l, \partial_{\gl_i}I = s_{i_l}$
\item $s_{i_l} = \gl_i-\gl_{i_{l+1}}+s_{i_{l+1}}=\partial_{\gl_i}I$
\end{enumerate}
First we note that $a_i,b_i \in \gp^{\partial_{\gl_i}I}\A_{\gl_i}^{\gr_i}$. Let $m_i$ be as defined in the Theorem~\ref{theorem:GeneralOrbitPairNonMax}.
Now we analyze the valuations of the summands in the Equations~\ref{eq:D},\ref{eq:E} as given in the following cases to arrive at conclusions.

\fo{15}{15}{\bf{Cases $1,2,3,4:$}} $\partial_{\gl_i}I = s_{i_l}$ or $\partial_{\gl_i}I = \gl_i-\gl_{i_{l+1}}+s_{i_{l+1}}$\\
Let $l_0$ be any element having the following property.
\begin{itemize}
\item $l_0 \leq i$ and $l_0 \nin S$ such that $r_{l_0} = \partial_{\gl_i}I$
\item $l_0 < i$ and $l_0 \in S$ with $\gr_{l_0} > 1$ such that $r_{l_0} = \partial_{\gl_i}I$
\item $l_0 > i$ and $l_0 \nin S$ such that $\gl_i-\gl_{l_0}+r_{l_0} = \partial_{\gl_i}I$
\item $l_0 > i$ and $l_0 \in S$ with $\gr_{l_0} > 1$ such that $\gl_i-\gl_{l_0}+r_{l_0} = \partial_{\gl_i}I$
\end{itemize}
\begin{enumerate}
\item If there does not exist any such $l_0$ then
    \begin{itemize}
    \item we conclude that $m_i > \partial_{\gl_i}I$
    \item $b_i-y_{i_l}a_i \in \gp^{m_i}\A_{\gl_i}^{\gr_i}$ if $r_i \geq m_i$
    \item $b_i-y_{i_l}a_i \in \gp^{r_i}\A_{\gl_i}^{\gr_i*}$ if $r_i < m_i$
    \end{itemize}
\item If there exist unique such $l_0$ then
    \begin{itemize}
    \item If $l_0 = i$ and $r_{l_0}=r_i$ then
        \begin{enumerate}[label=$\bullet$]
            \item $ m_i > \partial_{\gl_i}I = r_{l_0} = r_i$.
            \item $b_i-y_{i_l}a_i \in \gp^{r_i}\A_{\gl_i}^{\gr_i*}$
        \end{enumerate}
    \item If $l_0 \neq i$ then $(a_i,b_i)$ can be any element of $\gp^{\partial_{\gl_i}I}\A_{\gl_i}^{\gr_i} \oplus \gp^{\partial_{\gl_i}I}\A_{\gl_i}^{\gr_i}$.
    \end{itemize}
\item If there exist more than one $l_0$ then $(a_i,b_i)$ can be any element of $\gp^{\partial_{\gl_i}I}\A_{\gl_i}^{\gr_i} \oplus
\gp^{\partial_{\gl_i}I}\A_{\gl_i}^{\gr_i}$.
\end{enumerate}
\fo{15}{15}{\bf{Case $5:$}}$s_{i_l} = \gl_i-\gl_{i_{l+1}}+s_{i_{l+1}}=\partial_{\gl_i}I$\\
In this case $(a_i,b_i)$ can be any element of
$\gp^{\partial_{\gl_i}I}\A_{\gl_i}^{\gr_i} \oplus
\gp^{\partial_{\gl_i}I}\A_{\gl_i}^{\gr_i}$.
\end{proof}
This completes half the proof which involves the description of the
transitive subset containing $(e=e(I),f)$ in $\grpp^{I^*} \times
\grpp^{I^*}$.

Using the notation of Theorem~\ref{theorem:CanonicalSimilarPair}, let
\begin{equation*}
\mcl{O} = \{(\ul{a},\ul{b})=(ge=ge(I),gf)\in \grpp^{I^*} \times
\grpp^{I^*} \subs \grpp\times\grpp\mid g \in \autgp\}
\end{equation*}
be the transitive set corresponding to the $\autgp^I$-transitive set $\mcl{O}_{m',J,K}$.
\begin{theorem}
\label{theorem:GeneralOrbitPairConverse}
Suppose the element $(\ul{a},\ul{b})\in \grpp^{I^*} \times
\grpp^{I^*} $ has the same case description as that of $(e(I),f)$ then there exists an element
$g \in \autgp$ such that $(\ul{a},\ul{b})=(ge(I),gf)$ i.e. $(\ul{a},\ul{b}) \in \mcl{O}$.
\end{theorem}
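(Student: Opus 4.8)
The plan is to produce an automorphism $g\in\autgp$, written as a block matrix $g_{mat}$ of the shape~(\ref{eq:aut}), with $g\,e(I)=\ul a$ and $g f=\ul b$; this exhibits $(\ul a,\ul b)\in\mcl O$ and proves the statement. I would build $g$ in two stages: first match the first coordinate, then correct the second coordinate inside the stabilizer $\autgp^I$ of $e(I)$.

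\emph{Stage 1 (first coordinate).} Since $\grpp^{I^*}$ is a single $\autgp$\nobreakdash-orbit (Theorem~\ref{theorem:orbit-par}) and $\ul a\in\grpp^{I^*}$, pick $g_1\in\autgp$ with $g_1 e(I)=\ul a$. It then suffices to show that $m:=g_1^{-1}\ul b$ lies in the $\autgp^I$\nobreakdash-orbit of $f$: if $g_2\in\autgp^I$ carries $f$ to $m$, then $g=g_1 g_2$ satisfies $g\,e(I)=\ul a$ and $g f=\ul b$ (this is the bijection of Observation~\ref{obs:Bijection}). Writing $f=(f',f'')\in\grppp\oplus\grpppp$, $J=I(\overline{f'})$, $K=I(f'')$, Theorem~\ref{theorem:sub-orbit} describes that orbit as the set of $(l',l'')$ with $l'\in f'+\grppp^{J\cup K}$ and $l''\in\grppppp$-type data, i.e. $l''\in\grpppp^{K^*}+\grpppp^{J}$. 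So the problem is reduced to verifying these two membership conditions for $m=g_1^{-1}\ul b$.

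\emph{Stage 2 (reading off the conditions).} The crux is to deduce those conditions from the hypothesis that $(\ul a,\ul b)$ has the same case description as $(e(I),f)$. I would argue componentwise along the isotypic decomposition $\grpp=\bigoplus_i\A_{\gl_i}^{\gr_i}$, using exactly the independence-of-components feature stressed in Theorem~\ref{theorem:sub-orbit2}. For $i\in S$ the case description (cases A, B) pins down the valuation of $b_i-y_{i_l}a_i$, and in case B a direction mod $\gp$ independent of $a_i\bmod\gp$; for $i\notin S$ the cases (i)--(iii) do the same with $\partial_{\gl_i}I$ in place of $s_{i_l}$. Matching these, component by component, against the componentwise descriptions of $\grppp^{J\cup K}$, $\grpppp^{K^*}$ and $\grpppp^{J}$ --- which are controlled by the boundary valuations $\partial_{\gl_i}J$, $\partial_{\gl_i}K$, $\partial_{\gl_i}[J\cup K]$ and by the characteristic element $x$ of Observation~\ref{obs:CharFormElement} --- recovers precisely the two conditions of Theorem~\ref{theorem:sub-orbit}. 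Because both sides are products over $i$, this is a purely local check in each component. An alternative route, perhaps easier to write: the forward direction, Theorems~\ref{theorem:GeneralOrbitPairMax} and~\ref{theorem:GeneralOrbitPairNonMax}, shows each $\autgp$\nobreakdash-orbit in $\grpp^{I^*}\times\grpp^{I^*}$ is contained in the locus cut out by its case description; it then remains only to check that these loci are mutually exclusive and exhaust $\grpp^{I^*}\times\grpp^{I^*}$, whence each orbit coincides with its locus.

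\emph{Realization of $g$ and the main obstacle.} Concretely one solves the system~(\ref{eq:A})--(\ref{eq:E}): the diagonal blocks $A_{ii}$, constrained to have unit determinant, are chosen so that the $\gp^{s_{i}}$- and $\gp^{\partial_{\gl_i}I}$-leading parts of $\ul a$ come out correctly, and then the off-diagonal blocks $A_{ij}$ are chosen to produce the error terms $b_i-y_{i_l}a_i$ at the predicted valuations $m_i$, $m_{i_l}$. The hard part, I expect, is the valuation bookkeeping: one must check that the minima defining $m_i$ and $m_{i_l}$, together with the upper-triangular $\gp^{\gl_i-\gl_j}$ scaling built into~(\ref{eq:aut}), force the various contributions to land at exactly the right valuations and, when a case demands it, to span a genuinely new direction mod $\gp$; and, most delicately, that solving for the blocks needed in one component leaves the conditions already secured in the other components intact. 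The $\gp$-power scaling is what makes the cross-component perturbations harmless, and the linear-independence clauses in cases B and (ii) supply exactly the freedom needed to solve the relevant congruences for the $A_{ij}$ over the residue field (again using $|\mbb F_q|\ge 3$).
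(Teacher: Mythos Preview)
Your final paragraph (``Realization of $g$'') is in fact the paper's entire proof: the author constructs $g$ directly, one block row at a time, by setting most off-diagonal blocks $A_{i_l p}$ to zero and solving the small linear systems coming from~(\ref{eq:A})--(\ref{eq:E}) for the two or three remaining columns, using the case hypothesis to guarantee that the diagonal block $A_{i_l i_l}$ comes out invertible mod $\gp$. There is no reduction to the stabilizer and no appeal to Theorem~\ref{theorem:sub-orbit}; the whole point of the isotypic-componentwise case description is precisely that each block row can be filled in independently.

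Your Stage~1/Stage~2 reduction, by contrast, has a real gap. Once you apply $g_1^{-1}$ to $\ul b$, the resulting $m=g_1^{-1}\ul b$ has its isotypic components scrambled by the off-diagonal blocks of $g_1^{-1}$, so the componentwise case conditions you know on $(\ul a,\ul b)$ do not read off directly as conditions on $(m',m'')$. Your sentence ``because both sides are products over $i$, this is a purely local check'' is exactly where this breaks: the target conditions of Theorem~\ref{theorem:sub-orbit} are local in the $\grppp\oplus\grpppp$ decomposition, but the hypothesis you hold is local in the isotypic decomposition, and $g_1^{-1}$ does not respect either. The paper even flags this mismatch explicitly (see the remark after Theorem~\ref{theorem:sub-orbit2}). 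To push your route through you would have to choose $g_1$ with a very specific block shape and then redo essentially the same valuation bookkeeping the direct construction does anyway.

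Your alternative partition argument is sound in principle but also incomplete: you would need to show that distinct stabilizer-orbit data $(m',J,K)$ produce \emph{disjoint} case-description loci, and this is not obvious --- different $(y_{i_l},m_{i_l},r_{i_l})$ parameters can a priori cut out overlapping sets, and ruling that out requires tracking exactly how much of $y$ is determined modulo which power of $\gp$. Finally, the hypothesis $|\mbb F_q|\ge 3$ plays no role in this theorem; it enters only later in the commutativity lemmas.
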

\begin{proof}
Now we look at the converse. Let $I$ be the ideal with its
corresponding orbit $\grpp^{I^*}$, and associated partitions
$\gl^{'},\gl^{''}$. Let $\mcl{O}_{m',J,K}$ be the
$\autgp^I$-transitive set and $y \in (\A^*)^k$ be the unit
vector. These are all given as defined in
Theorem~\ref{theorem:CanonicalSimilarPair}. Let $(e,f), m_i, i\in
\{1,2,...,k\}$ be also as defined in the
Theorems~\ref{theorem:GeneralOrbitPairMax} and \ref{theorem:GeneralOrbitPairNonMax}. Let $(\ul{a},\ul{b}) \in
\grpp^{I^*} \times \grpp^{I^*}$ such that for each $1 \leq i \leq
k$, similar to $(e_i,f_i), (a_i,b_i)$ also satisfies the same one of
the cases with the conditions given in the hypothesis of these cases
then we observe that there exists a $g \in \autgp$ such that
$(ge(I),gf) = (\ul{a},\ul{b})$. The construction of an invertible
matrix $g \in \autgp$ is done in each block row. The conditions are
such that we can perform this construction independently in each
block row using appropriate valuations and linearly independent
conditions.

Let $g \in \autgp$ be an element as described in equation~(\ref{eq:aut}).

Suppose there exists $i_l \in S$ such that $s_{i_l} < m_{i_l} \leq
r_{i_l}$ and the minimum is attained at $s_j + val(y_j-y_{i_l})$ for
some $j < i_l, j \in S$. Also suppose $(\ul{a},\ul{b})$ satisfies
the hypothesis of this condition i.e. $b_{i_l}-a_{i_l}y_{i_l} \in
\gp^{m_{i_l}}\A_{\gl_i}^{\gr_{i_l}}$. This occurs in Case $A$ of
Theorem~\ref{theorem:GeneralOrbitPairMax}. We determine the $i_l^{th}$
block row of $g \in \autgp$ as follows. Set $A_{i_lp} = 0$ for $p
\neq i_l$ and $p \neq j$. To determine $A_{i_lj}$ and $A_{i_li_l}$
we solve the equations.
\begin{equation*}
\begin{aligned}
& A_{i_lj}\gp^{s_j}e^+(\gr_j) + A_{i_li_l}\gp^{s_{i_l}}e^+(\gr_{i_l}) = a_{i_l}.\\
& A_{i_lj}\gp^{s_j}t(y_j,\gp^{r_j-s_j};\gr_j)+A_{i_li_l}\gp^{s_{i_l}}t(y_{i_l},\gp^{r_{i_l}-s_{i_l}};\gr_{i_l})
= b_{i_l}.\\
& A_{i_lj}\gp^{s_j}t(y_j-y_{i_l},\gp^{r_j-s_j};\gr_j) + A_{i_li_l}t(0,\gp^{r_{i_l}};\gr_{i_l})
= b_{i_l} - a_{i_l}y_{i_l} = \gp^{m_{i_l}}C\\
& \text{ for some column vector } C \in \A_{\gl_{i_l}}^{\gr_{i_l}}.\\
\end{aligned}
\end{equation*}
Let $(y_j-y_{i_l})\gp^{s_j} = \gp^{m_{i_l}}y'$ for some unit $y' \in \A$. Let $C_{i_lj}^{1},C_{i_lj}^{2},C_{i_li_l}^{1},C_{i_li_l}^{2}$
denote the first and second columns of $A_{i_lj},A_{i_li_l}$ respectively. Choose $C_{i_li_l}^{2}$ to be any vector in $(\AAA {\gl_{i_l}})^{\gr_{i_l}}$
such that $C_{i_li_l}^{2} (mod\ \ \gp)$ is linearly independent from $\gp^{-s_{i_l}}a_{i_l} (mod\ \ \gp)$. Choose $C_{i_lj}^{p} = 0$ for $p>2$.
We get the following equations for the columns.
\begin{equation*}
\begin{aligned}
& \gp^{s_j}C_{i_lj}^{1} + \gp^{s_{i_l}}C_{i_li_l}^{1} = a_{i_l}.\\
& y'\gp^{m_{i_l}}C_{i_lj}^{1} + \gp^{r_j}C_{i_lj}^{2} + \gp^{r_{i_l}}C_{i_li_l}^{2} = \gp^{m_{i_l}}C.
\end{aligned}
\end{equation*}
Choose $C_{i_lj}^{2} = 0$ and solving for $C_{i_lj}^{1}$ and then for $C_{i_li_l}^{1}$ we get
$C_{i_lj}^{1} = (y')^{-1}(C - \gp^{r_{i_l}-m_{i_l}}C_{i_li_l}^{2})$ and $C_{i_li_l}^{1} = \gp^{-s_{i_l}}a_{i_l} - \gp^{s_j-s_{i_l}}C_{i_lj}^{1}$.
Since $s_j > s_{i_l},C_{i_li_l}^{1} \equiv \gp^{-s_{i_l}}a_{i_l} (mod\ \ \gp)$ and is linearly independent from
$C_{i_li_l}^{2} (mod\ \ \gp)$. Now extend the columns of $A_{i_li_l}$ to a matrix such that $A_{i_li_l} (mod\ \ \gp)$ is invertible.

Suppose there exists $i_l \in S$ such that $m_{i_l} > r_{i_l} \geq
s_{i_l}$. Also suppose $(\ul{a},\ul{b})$ satisfies the hypothesis of
this condition i.e. $b_{i_l}-a_{i_l}y_{i_l} \in
\gp^{r_{i_l}}\A_{\gl_{i_l}}^{\gr_{i_l}}$ and
$\gp^{-r_{i_l}}(b_{i_l}-y_{i_l}a_{i_l}) (mod\ \ \gp)$ is linearly
independent with $\gp^{-s_{i_l}}a_{i_l} (mod\ \ \gp)$. This occurs
in Case $B$ of Theorem~\ref{theorem:GeneralOrbitPairMax}. We determine
the $i_l^{th}$ block row of $g \in \autgp$ as follows. Set $A_{i_lp}
= 0$ for $p \neq i_l$. To determine $A_{i_li_l}$ we solve the
equations.
\begin{equation*}
\begin{aligned}
& A_{i_li_l}\gp^{s_{i_l}}e^+(\gr_{i_l}) = a_{i_l}.\\
& A_{i_li_l}\gp^{s_{i_l}}t(y_{i_l},\gp^{r_{i_l}-s_{i_l}};\gr_{i_l}) = b_{i_l}.\\
& A_{i_li_l}t(0,\gp^{r_{i_l}};\gr_{i_l}) = b_{i_l} - a_{i_l}y_{i_l} = \gp^{r_{i_l}}C\\
& \text{ for some column vector } C \in \A_{\gl_{i_l}}^{\gr_{i_l}*}.\\
\end{aligned}
\end{equation*}
Let $C_{i_li_l}^{1},C_{i_li_l}^{2}$ denote the first and second
columns of $A_{i_li_l}$ respectively. Choose $C_{i_li_l}^{1}$ to be
$\gp^{-s_{i_l}}a_{i_l}$ and $C_{i_li_l}^{2}$ to be the vector
$\gp^{-r_{i_l}}(b_{i_l}-y_{i_l}a_{i_l}) \in \A_
{\gl_{i_l}}^{\gr_{i_l}}$. Then we have the linearly independent
condition satisfied for $A_{i_li_l}$ and extend the columns of
$A_{i_li_l}$ such that the matrix $A_{i_li_l} (mod\ \ \gp)$ is
invertible.

Now suppose there exists an $i \nin S$ and $i_l < i < i_{l+1}$ such
that $r_i \geq m_i > \partial_{\gl_i}I$ and the minimum is attained
at $s_j + val(y_j-y_{i_l})$ for some $j < i_l, j \in S$. Also
suppose $(\ul{a},\ul{b})$ satisfies the hypothesis of this condition
i.e. $b_i-a_iy_i \in \gp^{m_i}\A_{\gl_i}^{\gr_i}$. This occurs in
Case $(i)$ of Theorem~\ref{theorem:GeneralOrbitPairNonMax}. We determine the
$i^{th}$ block row of $g \in \autgp$ as follows. Set $A_{ip} = 0$
for $p \neq i_l,j,i$. To determine $A_{ij},A_{ii_l},A_{ii}$ we solve
the equations.
\begin{equation*}
\begin{aligned}
& A_{ij}\gp^{s_j}e^+(\gr_j) + A_{ii_l}\gp^{s_{i_l}}e^+(\gr_{i_l}) = a_{i}.\\
& A_{ij}\gp^{s_j}t(y_j,\gp^{r_j-s_j};\gr_j) + A_{ii_l}\gp^{s_{i_l}}t(y_{i_l},\gp^{r_{i_l}-s_{i_l}};\gr_{i_l}) + A_{ii}\gp^{r_{i}}e^+(\gr_i) = b_{i}.\\
& A_{ij}\gp^{s_j}t(y_j-y_{i_l},\gp^{r_j-s_j};\gr_j) + A_{ii_l}t(0,\gp^{r_{i_l}};\gr_{i_l}) + A_{ii}\gp^{r_{i}}e^+(\gr_i)
= b_{i} - a_{i}y_{i_l} = \gp^{m_i}C\\
&\text{ for some column vector } C \in \A_{\gl_{i}}^{\gr_{i}}.\\
\end{aligned}
\end{equation*}
Let $(y_j-y_{i_l})\gp^{s_j} = \gp^{m_{i}}y'$ for some unit $y' \in
\A$. Let
$C_{ii_l}^{1},C_{ii_l}^{2},C_{i_lj}^{1},C_{i_lj}^{2},C_{ii}^{1}$
denote the first and second columns of $A_{ii_l},A_{ij}$ and first
column of $A_{ii}$ respectively. Set the columns $C_{ij}^{p} =
C_{ii_l}^{p} = 0$ for $p > 2$. Choose $C_{ii}^{1}$ to be any vector
in $\A_{\gl_i}^{\gr_i*}$. Now extend the columns of $A_{ii}$ to a
matrix such that $A_{ii} (mod\ \ \gp)$ is invertible. We get the
following equations for the columns.
\begin{equation*}
\begin{aligned}
& \gp^{s_j}C_{ij}^{1} + \gp^{s_{i_l}}C_{ii_l}^{1}  = a_{i}.\\
& y'\gp^{m_{i}}C_{ij}^{1} + \gp^{r_j}C_{ij}^{2} + \gp^{r_{i_l}}C_{ii_l}^{2} + \gp^{r_i}C_{ii}^1 = \gp^{m_{i}}C.
\end{aligned}
\end{equation*}
Choose $C_{ij}^{2} = C_{ii_l}^{2} = 0$ and solving for $C_{ij}^{1}$ and then for $C_{ii_l}^{1}$ we get
$C_{ij}^{1} = (y')^{-1}(C - \gp^{r_{i}-m_{i}}C_{ii}^{1})$ and $C_{ii_l}^{1} = \gp^{-s_{i_l}}a_{i} -
\gp^{s_j-s_{i_l}}C_{ij}^{1}$.

The rest of the cases are similar. This completes the proof of
Theorem~\ref{theorem:GeneralOrbitPairConverse}.
\end{proof}
\begin{example}
Here is an example that describes the orbit of pairs in the two
component case. Consider the finite module $\A_{l} \oplus \A_{k}$
with $k<l$ and without multiplicity corresponding to the partition
$\gl=(l^1,k^1) \in \Gl$. Consider the orbit $\grpp^{I^*} \subs
\A_{l} \oplus \A_{k}$ of the non-principal ideal $I$ with $\max(I) =
\{(s+r,l),(s,k)\} \in \mcl{P}$. Hence $\grpp^{I^{*}} =
\gp^{s+r}\A_l^* \times \gp^s\A_k^*$ with $0<r<l-k$. The transitive
subsets of $\grpp^{I^*} \times \grpp^{I^*}$ under the diagonal
action $\autgp$ is given as follows. Given two units $y$ and $x$ in
$\A^{*}$, the transitive subset
\begin{equation*}
\begin{aligned}
&I_{y,x} =
\{((\gp^{r+s}u,\gp^{s}w),(\gp^{r+s}u^{'},\gp^{s}w^{'}))\mid
u,u^{'},w,w^{'} \text{ are units and }\\
&(\gp^{r+s}u^{'}-\gp^{r+s}uy,\gp^{s}w^{'}-\gp^{s}wx) \in
\gp^{l-k+s+t}\A_{l} \oplus \gp^{r+s+t}\A_{k}\}\text{ where }t \|
(x-y).
\end{aligned}
\end{equation*}
Similarly the ordered pairs $\{(y,x)\in \A_{l-k-r+t}^{*} \oplus
\A_{r+t}^{*}\}$ is the parameter group which is independent of the
shift parameter $s$.
\end{example}

\section{\bf{Commutativity}}
Let $\grpp^{I^*} \subs \grpp$ be an orbit under the action of
$\autgp$. The multiplication in the endomorphism algebra
$\Endo_{\autgp}(\mbb{C}[\grpp^{I^*}])$ is given by convolution. We
prove commutativity of this convolution here.

For $j=1,2$ let $\mcl{O}_j \subs \grpp^{I^*} \times \grpp^{I^*}$
denote two transitive sets. Let $\mcl{I}_{\mcl{O}_1}$ and
$\mcl{I}_{\mcl{O}_2}$ denote the indicator functions of these two
orbits. Suppose $(\ga,\gb) \in \grpp^2$ be an element. Then
$\mcl{I}_{\mcl{O}_1}*\mcl{I}_{\mcl{O}_2}(\ga,\gb) = \us{\gga \in
\grpp}{\sum}\mcl{I}_{\mcl{O}_1}(\ga,\gga)\mcl{I}_{\mcl{O}_2}(\gga,\gb)$
and $\mcl{I}_{\mcl{O}_2}*\mcl{I}_{\mcl{O}_1}(\ga,\gb) = \us{\gd \in
\grpp}{\sum}\mcl{I}_{\mcl{O}_2}(\ga,\gd)\mcl{I}_{\mcl{O}_1}(\gd,\gb)$.
To prove commutativity we need to prove that the existence of an
element $\gga \in \grpp$ such that $\mcl{I}_{\mcl{O}_1}(\ga,\gga) =
1 = \mcl{I}_{\mcl{O}_2}(\gga,\gb)$ is equivalent to the existence of
an element $\gd \in \grpp$ such that $\mcl{I}_{\mcl{O}_2}(\ga,\gd) =
1 = \mcl{I}_{\mcl{O}_1}(\gd,\gb)$ and that the number of solutions
for $\gga$ to the equations
\begin{equation}
\begin{aligned}
\mcl{I}_{\mcl{O}_1}(\ga,\gga) &= 1\\
\mcl{I}_{\mcl{O}_2}(\gga,\gb) &= 1
\end{aligned}
\end{equation}
is equal to the number of solutions for $\gd$ to the equations
\begin{equation}
\begin{aligned}
\mcl{I}_{\mcl{O}_2}(\ga,\gd) &= 1\\
\mcl{I}_{\mcl{O}_1}(\gd,\gb) &= 1
\end{aligned}
\end{equation}
We prove this componentwise for $\mcl{I}_{\mcl{O}_1}$ and
$\mcl{I}_{\mcl{O}_2}$ corresponding to each isotypic component
$\A_{\gl_i}^{\gr_i}$ of $\grpp$.

First we prove a simple lemma on counting number of solutions to certain congruences with certain conditions.
%\newpage
\begin{lemma}
\label{lemma:LemmaCommutativity} Let $y,y^{'} \in \mcl{A}_{n,k} =
\A_{n}^k$ and $r,r^{'} \in \{0,1,2,\ldots ,n\}$. Let
$\mcl{A}_{n,k}^{*}$ denote the set $\mcl{A}_{n,k}-\gp\mcl{A}_{n,k}$.
Then
\begin{enumerate}
\item $|(y+\gp^{r}\mcl{A}_{n,k}^{*})\cap(y^{'}+\gp^{r^{'}}\mcl{A}_{n,k}^{*})| = |(y^{'}+\gp^{r}\mcl{A}_{n,k}^{*})\cap(y+\gp^{r^{'}}\mcl{A}_{n,k}^{*})|$\\
If the intersection of the left hand side is non-empty then

$|((y+\gp^{r}\mcl{A}_{n,k}^{*})\cap(y^{'}+\gp^{r^{'}}\mcl{A}_{n,k}^{*}))| =$\\
\begin{center}
$\begin{cases}
    q^{(n-r-1)k}(q^k-1) & \text{ for }r>r^{'}\\
    q^{(n-r-1)k}(q^k-1) & \text{ if } y-y' \in \gp^{(r+1)}\mcl{A}_{n,k},r=r^{'}\\
    q^{(n-r-1)k}(q^k-2) & \text{ if } y-y' \in \gp^{r}\mcl{A}_{n,k}^{*},r=r^{'}
\end{cases}$
\end{center}
\item $|(y+\gp^{r}\mcl{A}_{n,k})\cap(y^{'}+\gp^{r^{'}}\mcl{A}_{n,k})| = |(y^{'}+\gp^{r}\mcl{A}_{n,k})\cap(y+\gp^{r^{'}}\mcl{A}_{n,k})|$\\
If the intersection of the left hand side is non-empty then

$|((y+\gp^{r}\mcl{A}_{n,k})\cap(y^{'}+\gp^{r^{'}}\mcl{A}_{n,k}))| = q^{(n-r)k}$.

\item $|(y+\gp^{r}\mcl{A}_{n,k})\cap(y^{'}+\gp^{r^{'}}\mcl{A}_{n,k}^{*})| = |(y^{'}+\gp^{r}\mcl{A}_{n,k})\cap(y+\gp^{r^{'}}\mcl{A}_{n,k}^{*})|$

If the intersection of the left hand side is non-empty then

$|(y+\gp^{r}\mcl{A}_{n,k})\cap(y^{'}+\gp^{r^{'}}\mcl{A}_{n,k}^{*})| =$\\
\begin{center}
$\begin{cases}
    q^{(n-r)k} & \text{ for }r>r^{'}\\
    q^{(n-r'-1)k}(q^k-1) & \text{ if } r\leq r^{'}\\
\end{cases}$
\end{center}
\end{enumerate}
\end{lemma}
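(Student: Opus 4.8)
The plan is to reduce every count to counting residue vectors in $\mbb{F}_q^{k}$ after factoring out a power of $\gp$. First I would dispose of the two symmetry statements, which are essentially free: since $-1$ is a unit of $\A$, each of $\gp^{r}\mcl{A}_{n,k}$ and $\gp^{r}\mcl{A}_{n,k}^{*}$ is stable under $x\mapsto -x$, so the involution $z\mapsto y+y'-z$ of $\mcl{A}_{n,k}$ restricts to a bijection of $(y+\gp^{r}\mcl{A}_{n,k}^{*})\cap(y'+\gp^{r'}\mcl{A}_{n,k}^{*})$ onto $(y'+\gp^{r}\mcl{A}_{n,k}^{*})\cap(y+\gp^{r'}\mcl{A}_{n,k}^{*})$, and the same involution handles parts (2) and (3). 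Next, translating all three sets by $-y$ and putting $w=y'-y$, it suffices to compute $|\gp^{r}\mcl{A}_{n,k}^{*}\cap(w+\gp^{r'}\mcl{A}_{n,k}^{*})|$ and its analogues. Since the set in part (1), and the one in part (2), is unchanged under the swap $(y,r)\leftrightarrow(y',r')$, I may assume $r\geq r'$ in those two parts; part (3) is genuinely asymmetric between $\mcl{A}_{n,k}$ and $\mcl{A}_{n,k}^{*}$, so there both orderings of $r$ and $r'$ must be treated. (If a starred valuation equals $n$ the corresponding set is empty and the statement is vacuous, so I assume the starred valuations are $<n$.)

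The structural input I would use is: $\gp^{r}\mcl{A}_{n,k}=\{x\in\mcl{A}_{n,k}\mid val_{\gp}(x)\geq r\}$ has $q^{(n-r)k}$ elements; $\gp^{r}\mcl{A}_{n,k}^{*}=\gp^{r}\mcl{A}_{n,k}\setminus\gp^{r+1}\mcl{A}_{n,k}$ is the set of vectors of valuation exactly $r$ and has $q^{(n-r-1)k}(q^{k}-1)$ elements; and every element of $\gp^{r}\mcl{A}_{n,k}$ is uniquely of the form $\gp^{r}t$ with $t\in\mcl{A}_{n-r,k}$. Under this last parametrization the membership conditions defining our intersections become congruences on $t$ modulo powers of $\gp$, and each count factors as (number of admissible residues $\bar t\in\mbb{F}_q^{k}$) times $q^{(n-r-1)k}$ free lifts. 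Part (2) is then immediate: $\gp^{r}\mcl{A}_{n,k}$ and $\gp^{r'}\mcl{A}_{n,k}$ are subgroups of $\mcl{A}_{n,k}$ whose intersection is $\gp^{\max(r,r')}\mcl{A}_{n,k}=\gp^{r}\mcl{A}_{n,k}$ (as $r\geq r'$), so when $(y+\gp^{r}\mcl{A}_{n,k})\cap(y'+\gp^{r'}\mcl{A}_{n,k})$ is non-empty it is a coset of $\gp^{r}\mcl{A}_{n,k}$, of size $q^{(n-r)k}$.

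For part (1) with $r>r'$: if the intersection is non-empty then $val_{\gp}(w)=r'$ (every element of $\gp^{r}\mcl{A}_{n,k}^{*}$ has valuation $r>r'$), and conversely for any $z\in\gp^{r}\mcl{A}_{n,k}^{*}$ one has $val_{\gp}(z-w)=r'$, so $\gp^{r}\mcl{A}_{n,k}^{*}\subseteq w+\gp^{r'}\mcl{A}_{n,k}^{*}$ and the count is $|\gp^{r}\mcl{A}_{n,k}^{*}|=q^{(n-r-1)k}(q^{k}-1)$. For $r=r'$: non-emptiness forces $w\in\gp^{r}\mcl{A}_{n,k}$; writing $w=\gp^{r}s$ and a candidate as $\gp^{r}t$, the conditions become $\bar t\neq 0$ and $\bar t\neq\bar s$ in $\mbb{F}_q^{k}$, which yields $q^{(n-r-1)k}(q^{k}-1)$ when $\bar s=0$, i.e.\ $y-y'\in\gp^{r+1}\mcl{A}_{n,k}$, and $q^{(n-r-1)k}(q^{k}-2)$ when $\bar s\neq 0$, i.e.\ $y-y'\in\gp^{r}\mcl{A}_{n,k}^{*}$. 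Part (3) runs the same way: if $r>r'$, then once $val_{\gp}(w)=r'$ every $z\in\gp^{r}\mcl{A}_{n,k}$ automatically has $z-w\in\gp^{r'}\mcl{A}_{n,k}^{*}$, giving $|\gp^{r}\mcl{A}_{n,k}|=q^{(n-r)k}$; if $r\leq r'$, one parametrizes $z=w+\gp^{r'}s'$ with $\bar{s'}\neq 0$, the remaining condition $z\in\gp^{r}\mcl{A}_{n,k}$ being equivalent to $w\in\gp^{r}\mcl{A}_{n,k}$ (automatic when the intersection is non-empty), so the count is $|\gp^{r'}\mcl{A}_{n,k}^{*}|=q^{(n-r'-1)k}(q^{k}-1)$. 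I expect the only real friction to lie in tracking the non-emptiness hypotheses and, in the $r=r'$ case of part (1), correctly separating the two sub-cases according to whether $w$ reduces to $0$ modulo $\gp$; each individual step is routine.
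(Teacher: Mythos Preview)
Your proof is correct and, for the symmetry statements, cleaner than the paper's. The paper establishes the equality of cardinalities in part~(1) (assuming $r\geq r'$) by passing to the parameter pairs $(a_1,a_2)\in \mcl{A}_{n-r,k}^{*}\times\mcl{A}_{n-r',k}^{*}$ determined by $x=y+\gp^{r}a_1=y'+\gp^{r'}a_2$ and applying the linear involution $(a_1,a_2)\mapsto(a_1,\,2\gp^{r-r'}a_1-a_2)$, which it records as the matrix $\mattwo{I_k}{0_k}{2\gp^{r-r'}I_k}{-I_k}$; your single involution $z\mapsto y+y'-z$ does the same job uniformly for all three parts, without introducing the factor~$2$ or the auxiliary parameter space. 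For the actual counts the two arguments are essentially the same: the paper appeals to ``standard filtrations of $\AAA{n}$'' and writes down the answers, while you make the reduction to counting residues $\bar t\in\mbb{F}_q^{k}$ explicit, which is exactly what those filtrations amount to. One small point worth recording is that your involution still works when $2=0$ in the residue field, whereas the paper's matrix bijection visibly uses~$2$; this does not matter for the paper's main theorem (which assumes $q\geq 3$) but it makes your version of the lemma marginally more general.
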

\begin{proof}
Suppose $r \geq r^{'}$ we produce a bijection between the sets $(y+\gp^{r}\mcl{A}_{n,k}^{*})\cap(y^{'}+\gp^{r^{'}}\mcl{A}_{n,k}^{*})$ and $(y^{'}+\gp^{r}\mcl{A}_{n,k}^{*})\cap(y+\gp^{r^{'}}\mcl{A}_{n,k}^{*})$ as follows. Let
\begin{center}
$\underset{x=y+\gp^{r}a_{1} = y^{'}+\gp^{r^{'}}a_{2}}{\ul{(y+\gp^{r}\mcl{A}_{n,k}^{*})\cap(y^{'}+\gp^{r^{'}}\mcl{A}_{n,k}^{*})}} \llra \underset{(a_{1},a_{2})}{\ul{B \subs \mcl{A}_{n-r,k}^{*}\times \mcl{A}_{n-r^{'},k}^{*}}} \lla$\\
$\lra \underset{(b_{1},b_{2}) = (a_{1}, -a_{2}+2 a_{1} \gp^{r-r^{'}})}{\ul{B^{'} \subs \mcl{A}_{n-r,k}^{*} \times \mcl{A}_{n-r^{'},k}^{*}}} \llra \underset{z = y^{'}+\gp^{r}b_{1} = y+\gp^{r^{'}}b_{2}}{\ul{(y^{'}+\gp^{r}\mcl{A}_{n,k}^{*})\cap(y+\gp^{r^{'}}\mcl{A}_{n,k}^{*})}}$.
\end{center}

Here we observe that the middle bijection extends to an automorphism as given below
\begin{center}
$\mattwo{I_{k}}{0_k}{2\gp^{r-r^{'}}I_{k}}{-I_{k}}$
\end{center}
of the finite torsion module
$\mcl{A}_{n-r,k}\times\mcl{A}_{n-r^{'},k} = \grpp$ for $\gl =
((n-r)^k,(n-r^{'})^k) \in \Gl$. This proves the equality of the
cardinality of sets in case $1$.

Now we note that the existence of a solution to the congruences
\begin{equation*}
\begin{aligned}
x \equiv & \ y  \ (mod\ \ \gp^r)\\
x \equiv & \ y' \ (mod\ \ \gp^{r'})\\
\end{aligned}
\end{equation*}
with conditions
\begin{equation*}
\begin{aligned}
x-y     \in & \  \gp^r \mcl{A}_{n,k}^{*}\\
x-y'    \in & \  \gp^{r'} \mcl{A}_{n,k}^{*}\\
\end{aligned}
\end{equation*}
is equivalent to existence of a solution to the congruences
\begin{equation*}
\begin{aligned}
x \equiv & \ y  \ (mod\ \ \gp^{r'})\\
x \equiv & \ y' \ (mod\ \ \gp^r)\\
\end{aligned}
\end{equation*}
with conditions
\begin{equation*}
\begin{aligned}
x-y     \in & \  \gp^{r'} \mcl{A}_{n,k}^{*}\\
x-y'    \in & \  \gp^r \mcl{A}_{n,k}^{*}.\\
\end{aligned}
\end{equation*}
To exactly find the cardinality of the number of solutions, we use standard filtrations of
$\AAA {n}$ and deduce that the number of solutions to both these sets of equations with the respective given conditions is\\
$\begin{cases}
        q^{(n-r-1)k}(q^k-1) &   \text{ if } r > r' \\
        q^{(n-r'-1)k}(q^k-1) &  \text{ if } r'> r \\
    q^{(n-r-1)k}(q^k-1) & \text{ if } r = r' \text{ and } y-y' \in \gp^{(r+1)}\mcl{A}_{n,k}\\
    q^{(n-r-1)k}(q^k-2) & \text{ if } r = r' \text{ and } y-y' \in \gp^{r}\mcl{A}_{n,k}^{*}
\end{cases}$

The other cases $(2)$ and $(3)$ are similar.
\end{proof}
\begin{defn}
Let $0 \neq x \in \mbb{F}_q^n$ and $S \subs \mbb{F}_q^n$. We say $S \subs \mbb{F}_q^n$ is
linearly independent to $x$, if $x \nin S$ and the set $\{s,x\}$ is linearly independent for all
$s \in S$. Let $x,y \in \mbb{F}_q^n$ with $x \neq y$ we say $x$ is linearly independent with $y$
again if the set $\{x,y\}$ is linearly independent.
\end{defn}
\begin{lemma}
\label{lemma:countingvectors} Let $x$ be a nonzero vector in $\mbb{F}_q^{k}$.
Let $\A$ be a discrete valuation ring with a uniformizing parameter $\gp$. Denote by $S
\subs \A_{n}^k$ the set consisting of $k$-tuples such that $S
(mod\ \ \gp)$ is a set of vectors in $\mbb{F}_q^{k}$ linearly independent to $x$.
Let $a,b \in \A_{n}^k$ be two elements such that $a \equiv b (mod\ \ \gp^r)$ where
$0\leq r < n$. Consider the equations
\begin{equation}
\begin{aligned}
\label{eq:Case8}
e \equiv & \ a \ (mod\ \ \gp^r)\\
e \equiv & \ b \ (mod\ \ \gp^r)
\end{aligned}
\end{equation}
with conditions
\begin{equation}
\begin{aligned}
\label{eq:Condition8}
\gp^{-r}(e - a) \in & \ S\\
\gp^{-r}(e - b) \in & \ S
\end{aligned}
\end{equation}
If there exists a solution to equations~(\ref{eq:Case8}) satisfying
conditions~(\ref{eq:Condition8}), then the total number of such
solutions is $
\begin{cases}
q^{k(n-r-1)}(q^{k}-2q) & \text{ if } \gp^{-r}(a-b) \in S\\
q^{k(n-r-1)}(q^{k}-q) & \text{ if } \gp^{-r}(a-b) \nin S
\end{cases}
$
\end{lemma}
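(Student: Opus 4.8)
The plan is to reduce the two simultaneous congruences to a single one, parametrize the solution set, and push the two side conditions down to a counting problem for residue vectors over $\mbb{F}_q$. First, since $a\equiv b\pmod{\gp^r}$, the two congruences in~(\ref{eq:Case8}) coincide, so every solution is uniquely of the form $e=a+\gp^r u$ with $u\in\A_{n-r}^k$; here $\gp^{-r}(e-a)$ is interpreted as the well-defined class $u$ in $\A_{n-r}^k$ (not in $\A_n^k$), and the membership ``$\in S$'' is read as the corresponding mod-$\gp$ linear-independence-with-$x$ condition in $\A_{n-r}^k$. Writing $a-b=\gp^r w$ with $w\in\A_{n-r}^k$, one gets $\gp^{-r}(e-a)=u$ and $\gp^{-r}(e-b)=u+w$, so conditions~(\ref{eq:Condition8}) become exactly $u\in S$ and $u+w\in S$, i.e.\ $\bar u$ and $\bar u+\bar w$ are each linearly independent with $x$ in $\mbb{F}_q^k$.

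Since these are conditions on $\bar u$ alone and every residue $\bar u\in\mbb{F}_q^k$ has precisely $q^{k(n-r-1)}$ lifts to $\A_{n-r}^k$, the number of solutions equals $q^{k(n-r-1)}$ times the number of $\bar u\in\mbb{F}_q^k$ avoiding both the line $\mbb{F}_q x$ through the origin and the parallel affine line $-\bar w+\mbb{F}_q x$. The elementary fact I would invoke is that two parallel lines in $\mbb{F}_q^k$ (of $q$ points each) either coincide or are disjoint, and they coincide precisely when $\bar w\in\mbb{F}_q x$; hence their union has $q$ points when $\bar w\in\mbb{F}_q x$ and $2q$ points otherwise. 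Finally, because $x\neq 0$, the condition $\bar w\in\mbb{F}_q x$ is exactly the failure of $\bar w$ to be linearly independent with $x$, i.e.\ $\gp^{-r}(a-b)=w\nin S$; in the complementary case $\gp^{-r}(a-b)\in S$. Thus the admissible residues number $q^k-q$ when $\gp^{-r}(a-b)\nin S$ and $q^k-2q$ when $\gp^{-r}(a-b)\in S$, and multiplying by $q^{k(n-r-1)}$ yields the stated formula. The ``if there exists a solution'' hypothesis is harmless: for $q\geq 3$ and $k\geq2$ (the only range in which $S$ is nonempty) we have $q^k-2q=q(q^{k-1}-2)>0$, so a solution automatically exists and the count is consistent.

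There is no real obstacle beyond the bookkeeping just described: the one point requiring care is tracking which quotient ring each of $u$, $w$, $\gp^{-r}(e-a)$, $\gp^{-r}(e-b)$ lives in, and interpreting ``$\in S$'' consistently through the single mod-$\gp$ condition. This is the linearly-independent analogue of case (1) of Lemma~\ref{lemma:LemmaCommutativity}, and the same reduction-to-$\mbb{F}_q$ mechanism applies; the difference is only that here the forbidden set is a line (of $q$ points) rather than the single point $\{0\}$, which is why the corrections are $q$ and $2q$ instead of $1$ and $2$.
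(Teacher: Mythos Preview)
Your proof is correct and follows essentially the same approach as the paper: both reduce to counting residues in $\mbb{F}_q^k$ that avoid two parallel affine translates of the line $\mbb{F}_q x$, and both split into cases according to whether those two lines coincide or are disjoint. The only cosmetic difference is that the paper introduces an explicit section $s^k:\mbb{F}_q^k\to\A^k$ and a $\gp$-adic digit expansion to isolate the $r$-th coefficient $e_r$, whereas you parametrize directly by $e=a+\gp^r u$ and work with $\bar u$; the resulting $\mbb{F}_q$-count is identical.
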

\begin{proof}
Since $\AAA {} \cong_{\phi} \mbb{F}_q$. Let $t^k:\A^k \lra
\mbb{F}_q^k$ with $t^k = \phi^k\ o \ q^k$ where $q:\A \lra (\AAA
{})$ be the quotient map. Let $s^k:\mbb{F}_q^k\lra \A^k$ be any
section. Then given any element $c \in \A_{n}^k$ there exists a
unique set $\{c_0,c_1,c_2,...,c_{n-1}\}$ of vectors in $\mbb{F}_q^k$
such that
\begin{equation}
c = s^k(c_0) + s^k(c_1)\gp + s^k(c_2)\gp^2 + ...+ s^k(c_{n-1})\gp^{n-1}
\end{equation}
with condition
\begin{equation}
t^k(s^k(c_i)) = c_i \text{ for all } i = 0,1,2,...,(n-1)
\end{equation}
Let
\begin{equation}
\begin{aligned}
a =& s^k(a_0) + s^k(a_1)\gp + s^k(a_2)\gp^2 + ...+ s^k(a_{n-1})\gp^{n-1}\\
b =& s^k(b_0) + s^k(b_1)\gp + s^k(b_2)\gp^2 + ...+ s^k(b_{n-1})\gp^{n-1}
\end{aligned}
\end{equation}
Since a solution to equations~(\ref{eq:Case8}) satisfying
conditions~(\ref{eq:Condition8}) exists, we have
\begin{equation*}
\begin{aligned}
a \equiv & \ b (mod\ \ \gp^r) \text{ and hence }\\
a_i = & \ b_i \in \mbb{F}_q^k \text{ for all } i = 0,1,2,...,(r-1)
\end{aligned}
\end{equation*}
The conditions~(\ref{eq:Condition8}) implies that we need to count
the number of solutions $e \in \A_{n}^k$ such that the vectors
\begin{equation*}
\begin{aligned}
e_i = a_i = b_i & \text{ for } 0 \leq i \leq (r-1).\\
e_r-a_r,e_r-b_r & \text{ each of which is linearly independent with } x.\\
e_i & \text{ can be any element in } \mbb{F}_q^k \text{ for } r+1 \leq i < n.\\
\end{aligned}
\end{equation*}
Suppose $a_r-b_r \nin S$. Then $\{a_r + \gep x \mid \gep \in
\mbb{F}_q\} = \{b_r + \gep x \mid \gep \in \mbb{F}_q\}$. So the
number of such solutions $e \in \A_{n}^k$ in this case is
$q^{(n-r-1)k}(q^k-q)$. Suppose $a_r-b_r \in S$. Then $\{a_r + \gep x
\mid \gep \in \mbb{F}_q\} \cap \{b_r + \gep x \mid \gep \in
\mbb{F}_q\} = \es$. So the number of such solutions $e \in \A_{n}^k$
in this case is $q^{(n-r-1)k}(q^k-2q)$.
\end{proof}
\begin{lemma}
\label{lemma:LinearIndependences} Let $s,r_{1},r_{2}$ be nonnegative
integers such that $s \leq r_{1}, s \leq r_{2},s \leq t$. Let $a,b
\in \gp^{s}\A_t^{\gr*}$. Let $y_1,y_2$ be two units in $\A^*$.
Suppose the residue field $\mbb{F}_q \cong \AAA {1}$ has at least
three elements or the multiplicity $\gr$ is $> 2$. Then the number
of solutions for $e \in \gp^{s}\A_t^{\gr*}$ to the congruences
\begin{equation}
\label{eq:Case1}
\begin{aligned}
b \equiv  & \ e y_2 \ (mod\ \ \gp^{r_{2}})\\
e \equiv    &   \   a y_1   \   (mod\   \   \gp^{r_{1}})
\end{aligned}
\end{equation}
with conditions
\begin{equation}
\label{eq:Condition1}
\begin{aligned}
&\{\gp^{-r_{2}}(b-e y_2)\ (mod\ \ \gp),& \gp^{-s}e\ (mod\ \ \gp)\} \text{ are linearly independent in } \mbb{F}_q^{\gr}\\
&\{\gp^{-r_{1}}(e-a y_1)\ (mod\ \ \gp),& \gp^{-s}a\ (mod\ \ \gp)\} \text{ are linearly independent in } \mbb{F}_q^{\gr}\\
\end{aligned}
\end{equation}
is the same as the number of solutions for $e \in
\gp^{s}\A_t^{\gr*}$ to the congruences
\begin{equation}
\label{eq:Case2}
\begin{aligned}
b \equiv  & \ e y_1 \ (mod\ \ \gp^{r_{1}})\\
e \equiv    &   \   a y_2   \   (mod\   \   \gp^{r_{2}})
\end{aligned}
\end{equation}
with conditions
\begin{equation}
\label{eq:Condition2}
\begin{aligned}
&\{\gp^{-r_{1}}(b-e y_1)\ (mod\ \ \gp),& \gp^{-s}e\ (mod\ \ \gp)\} \text{ are linearly independent in } \mbb{F}_q^{\gr}\\
&\{\gp^{-r_{2}}(e-a y_2)\ (mod\ \ \gp),& \gp^{-s}a\ (mod\ \ \gp)\} \text{ are linearly independent in } \mbb{F}_q^{\gr}\\
\end{aligned}
\end{equation}
Also the number of solutions for $e \in \gp^{s}\A_t^{\gr*}$ to the
congruences
\begin{equation}
\label{eq:Case4}
\begin{aligned}
b \equiv  & \ e y_2 \ (mod\ \ \gp^{r_{2}})\\
e \equiv    &   \   a y_1   \   (mod\   \   \gp^{r_{1}})
\end{aligned}
\end{equation}
with conditions
\begin{equation}
\label{eq:Condition4}
\begin{aligned}
&\{\gp^{-r_{2}}(b-e y_2)\ (mod\ \ \gp),& \gp^{-s}e\ (mod\ \ \gp)\} \text{ are linearly independent in } \mbb{F}_q^{\gr}\\
\end{aligned}
\end{equation}
is the same as the number of solutions for $e \in
\gp^{s}\A_t^{\gr*}$ to the congruences
\begin{equation}
\label{eq:Case5}
\begin{aligned}
b \equiv  & \ e y_1 \ (mod\ \ \gp^{r_{1}})\\
e \equiv    &   \   a y_2   \   (mod\   \   \gp^{r_{2}})
\end{aligned}
\end{equation}
with conditions
\begin{equation}
\label{eq:Condition5}
\begin{aligned}
&\{\gp^{-r_{2}}(e-a y_2)\ (mod\ \ \gp),& \gp^{-s}a\ (mod\ \ \gp)\} \text{ are linearly independent in } \mbb{F}_q^{\gr}\\
\end{aligned}
\end{equation}
\end{lemma}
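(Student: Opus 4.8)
The plan is to prove each of the two asserted equalities by reducing both sides, via a $\gp$-adic digit expansion in the spirit of Lemma~\ref{lemma:countingvectors}, to an explicit product of elementary factors, and then matching the two products factor by factor. First I would normalise. Dividing everything by $\gp^{s}$ identifies $\gp^{s}\A_t^{\gr*}$ with $\A_{t-s}^{\gr*}$, turns a congruence modulo $\gp^{r}$ into one modulo $\gp^{r-s}$ in $\A_{t-s}^{\gr}$, leaves every vector of the form $\gp^{-r}(\cdot)\bmod\gp$ untouched, and turns $\gp^{-s}(\cdot)\bmod\gp$ into a plain reduction modulo $\gp$; so we may assume $s=0$, with $t\ge 1$ (the case $t=s$ being trivial). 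Next, since multiplying a vector by a unit of $\A$ preserves both membership in $\A_t^{\gr*}$ and linear (in)dependence of its reduction modulo $\gp$ with a fixed vector, and since $u\equiv vy\pmod{\gp^{r}}$ with $y\in\A^{*}$ is the same as $uy^{-1}\equiv v\pmod{\gp^{r}}$, the solution set of (\ref{eq:Case1})--(\ref{eq:Condition1}) rewrites as the set of $e\in\A_t^{\gr*}$ with $e\equiv ay_1\pmod{\gp^{r_1}}$, $e\equiv by_2^{-1}\pmod{\gp^{r_2}}$, and the two reductions $\gp^{-r_1}(e-ay_1)\bmod\gp$ and $\gp^{-r_2}(e-by_2^{-1})\bmod\gp$ each linearly independent from $e\bmod\gp$ --- here one uses that, when $r_i\ge 1$, $a\bmod\gp$ and $b\bmod\gp$ are proportional to $e\bmod\gp$ by the corresponding congruence, while the boundary cases $r_1=0$ or $r_2=0$ collapse to counting vectors avoiding one or two lines through the origin and are checked directly. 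Call this count $N_1$, and define $N_2$ analogously from (\ref{eq:Case2})--(\ref{eq:Condition2}), i.e.\ with the pairs $(ay_1,r_1),(by_2^{-1},r_2)$ replaced by $(ay_2,r_2),(by_1^{-1},r_1)$; since $N_1$ is visibly symmetric in its two precision--vector pairs, we may assume $r_1\le r_2$.

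Now fix a set-theoretic section $\mbb{F}_q^{\gr}\to\A^{\gr}$ and expand everything $\gp$-adically. The two congruences are compatible exactly when $ay_1\equiv by_2^{-1}\pmod{\gp^{\min(r_1,r_2)}}$, which by the unit-absorption step is a condition on the data that is preserved by the swap defining $N_2$. When they are compatible, the digits of $e$ in positions below $\max(r_1,r_2)$ are pinned; each higher digit that is not a critical position is free and contributes $q^{\gr}$; and at the critical position(s) $r_1$ and $r_2$ the linear-independence conditions become either a yes/no condition on the data (when that critical digit is itself pinned by the other congruence) or a choice of that digit avoiding one affine line in the direction $e\bmod\gp$ (factor $q^{\gr}-q$), respectively two parallel affine lines in that direction (factor $q^{\gr}-q$ if they coincide, $q^{\gr}-2q$ if they are disjoint). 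Thus $N_1$, and likewise $N_2$, becomes an explicit product of factors of the shapes $q^{\gr m}$ with $m\ge 0$, $q^{\gr}-q$, $q^{\gr}-2q$, and $\mbf{1}[\cdot]$.

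It remains to see these products agree. The free-digit powers agree because the swap preserves both $\max(r_1,r_2)$ and the length $t$; the compatibility indicators agree by unit absorption. For the critical data the single identity that does all the work is: if $by_2^{-1}-ay_1=\gp^{\rho}c$ then $by_1^{-1}-ay_2=\gp^{\rho}(y_2y_1^{-1}c)$, so the ``critical difference vector'' attached to $N_2$ is $(y_2y_1^{-1}\bmod\gp)$ times the one attached to $N_1$; as both are tested for linear independence against a common direction proportional to $a\bmod\gp$ (hence to $e\bmod\gp$), every yes/no condition and the coincide-versus-disjoint dichotomy transfer unchanged, and with them the choice between $q^{\gr}-q$ and $q^{\gr}-2q$. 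Hence $N_1=N_2$, the first asserted equality. The second equality, (\ref{eq:Case4})--(\ref{eq:Condition4}) against (\ref{eq:Case5})--(\ref{eq:Condition5}), is handled the same way: now only one linear-independence condition is present, and it sits at precision $r_2$ on the ``$b$''-difference in the first case and on the ``$a$''-difference in the second; the digit bookkeeping splits on $r_1\le r_2$ versus $r_1>r_2$, but the same unit identity again transports the at-most-one yes/no condition and the single $q^{\gr}-q$ factor correctly. The hypothesis that $\mbb{F}_q$ has at least three elements or $\gr>2$ enters precisely to ensure $q^{\gr}>2q$, so that $q^{\gr}-2q$ is the genuine number of admissible critical digits in the two-disjoint-lines subcase --- in particular positive, matching that subcase on the other side.

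The hard part is not any single step but the case bookkeeping of the last two paragraphs: separately for $r_1<r_2$, $r_1=r_2$ and $r_1>r_2$ one must determine which digit position is critical for which linear-independence condition, whether it is free or pinned, and check that the unit identity above transports every resulting factor. The algebra is routine once organised; the entire bijective content is carried by the relation $w\mapsto(y_2y_1^{-1}\bmod\gp)\,w$ between the two critical difference vectors.
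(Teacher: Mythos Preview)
Your approach is correct and genuinely different from the paper's. The paper proceeds in two steps: it first establishes existence equivalence by explicitly constructing, from a solution $e$ of one system, a solution $\tilde e$ of the other (e.g.\ $\tilde e=ay_2+\gp^{r_2}\ga$ for a carefully chosen $\ga$, which is where the hypothesis $q\ge 3$ or $\gr>2$ is invoked to guarantee such an $\ga$ exists), and then computes the two counts separately via filtrations, case-splitting on $r_1<r_2$, $s<r_1=r_2$, $s=r_1=r_2$. You instead absorb the units to rewrite both systems as ``$e$ congruent to two specified vectors, with linear-independence constraints against a single line $\langle a\bmod\gp\rangle$,'' factor each count as a product over $\gp$-adic digit positions, and transport every factor from one side to the other using the single identity $by_1^{-1}-ay_2=(y_2y_1^{-1})(by_2^{-1}-ay_1)$. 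The paper's route is more constructive and makes the witnesses explicit; yours is more structural, isolates the algebraic reason the swap works, and avoids building $\tilde e$ by hand. One small remark: your use of the hypothesis (``to ensure $q^{\gr}-2q>0$'') is slightly weaker than what your own argument needs---since the coincide/disjoint dichotomy transports under your unit identity, the two counts would agree even if $q^{\gr}-2q=0$; the paper genuinely needs the hypothesis for its existence construction, whereas in your framework it only serves to rule out the degenerate $q=2,\gr=2$ case where both sides can vanish.
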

\begin{proof}
First let us look at the congruence equations~(\ref{eq:Case1}) with
conditions~(\ref{eq:Condition1}) and congruence
equations~(\ref{eq:Case2}) with conditions~(\ref{eq:Condition2}).
Without loss of generality, let $s\leq r_{1} \leq r_{2}$. Existence
of such a solution $\ul{e}$ in any of the equations implies
\begin{equation}
\label{eq:Case3}
\begin{aligned}
b \equiv a y_1 y_2\ (mod\ \ \gp^{r_{1}})\\
\end{aligned}
\end{equation}
and if $r_{1} < r_{2}$ then we also have
\begin{equation}
\label{eq:Condition3}
\begin{aligned}
&\{\gp^{-r_{1}}(b-a y_1 y_2)\ (mod\ \ \gp),& \gp^{-s}e\ (mod\ \ \gp)\} \text{ are linearly independent in } \mbb{F}_q^{\gr}\\
\end{aligned}
\end{equation}
If there exists an element $\ul{e}$ satisfying
equation~(\ref{eq:Case1}) and condition~(\ref{eq:Condition1}) then
we choose $\ul{\ti{e}} = a y_2 + \gp^{r_{2}} \ga \ (mod\ \ \gp^t)$
for some $\ga \in \A^{\gr}$ such that
\begin{itemize}
\item $\ga \ (mod\ \ \gp)$ is linearly independent with $\gp^{-s}a\ (mod\ \ \gp)$ in $\mbb{F}_q^{\gr}$.
\item $(\gp^{-r_{1}}(b-a y_1y_2)-\gp^{r_{2}-r_{1}}(y_1\ga) \ (mod\ \ \gp^t)) \ (mod\ \ \gp)$ is linearly independent with $\gp^{-s}a\ (mod\ \ \gp)$ in $\mbb{F}_q^{\gr}$
\end{itemize}
Existence of such an $\ga$ in the case when
\begin{itemize}
\item $r_{1}=r_{2}$
\item $\gp^{-r_{1}}(b-a y_1y_2)$ is linearly independent with $\gp^{-s}a\ (mod\ \ \gp)$ in $\mbb{F}_q^{\gr}$
\item The residue field $\AAA {} \cong \mbb{F}_q$ has exactly two elements
\end{itemize}
requires that the multiplicity $\gr$ must be $> 2$. This element
$\ul{\ti{e}}$ gives rise to a solution to equation~(\ref{eq:Case2})
satisfying the condition~(\ref{eq:Condition2}).

Conversely if there exists an element $\ul{e}$ satisfying
equation~(\ref{eq:Case2}) and condition~(\ref{eq:Condition2}) then
we choose $\ul{\ti{e}} = b y^{-1}_2 + \gp^{r_{2}} y^{-1}_2 \ga \
(mod\ \ \gp^t)$ for some $\ga \in \A^{\gr}$ such that
\begin{itemize}
\item $\ga \ (mod\ \ \gp)$ is linearly independent with $\gp^{-s}a\ (mod\ \ \gp)$ in $\mbb{F}_q^{\gr}$.
\item $(\gp^{-r_{1}}(b y^{-1}_2-ay_1)+\gp^{r_{2}-r_{1}}(y^{-1}_2\ga) \ (mod\ \ \gp^t)) \ (mod\ \ \gp)$ is linearly independent with $\gp^{-s}a\ (mod\ \ \gp)$ in $\mbb{F}_q^{\gr}$
\end{itemize}
Again existence of such an $\ga$ in the case when
\begin{itemize}
\item $r_{1}=r_{2}$
\item $\gp^{-r_{1}}(b y^{-1}_2-a y_1)$ is linearly independent with $\gp^{-s}a\ (mod\ \ \gp)$ in $\mbb{F}_q^{\gr}$
\item The residue field $\AAA {} \cong \mbb{F}_q$ has exactly two elements
\end{itemize}
requires that the multiplicity $\gr$ must be $> 2$.

This element $\ul{\ti{e}}$ gives rise to a solution to
equation~(\ref{eq:Case1}) satisfying the
condition~(\ref{eq:Condition1}).

To exactly count the cardinality of the number of solutions, we
use standard filtrations of $\A_{n}$ and deduce that the number of
solutions $e \in \A_t^{\gr}$ to the set of
equations~(\ref{eq:Case1}) satisfying the
conditions~(\ref{eq:Condition1}) is same as the number of solutions
$e \in \A_t^{\gr}$ to the set of equations~(\ref{eq:Case2})
satisfying the conditions~(\ref{eq:Condition2}) and it is given by
\begin{itemize}
\item $q^{\gr(t-r_2-1)}(q^{\gr}-q)$ if $s \leq r_{1} < r_{2}$\\
\item $|(ay_1y_2+\gp^{r_{1}=r_{2}}S)\cap(b+\gp^{r_{1}=r_{2}}S)|$ if $s < r_{1} = r_{2}$ where $S \subs \A_t^{\gr}$ is a set such that
$S \ (mod\ \ \gp)$ is a set of vectors each of which is linearly independent to $\gp^{-s}a\ (mod\ \ \gp)$ in $\mbb{F}_q^{\gr}$.
This cardinality can be easily calculated to be\\
$
\begin{cases}
q^{\gr(t-r-1)}(q^{\gr}-2q) \text{ if } (ay_1y_2-b) \in \gp^rS \text{ where } r=r_{1}=r_{2}.\\
q^{\gr(t-r-1)}(q^{\gr}-q) \text{ if } (ay_1y_2-b) \nin \gp^rS \text{
where } r=r_{1}=r_{2}.
\end{cases}
$
\item Cardinality of the set $\{e \in \gp^{s}\A_t^{\gr} \mid \gp^{-s}e\ (mod\ \ \gp)$ is linearly independent to both
$\gp^{-s}a\ (mod\ \ \gp)$  and $\gp^{-s}b\ (mod\ \ \gp)$ in
$\mbb{F}_q^{\gr}\}$ if $s = r_{1} = r_{2}$. Again this cardinality
can be easily calculated to be $q^{\gr(t-s-1)}(q^{\gr}-2q+1)$.
\end{itemize}

The proof of the cardinalities of the number of solutions is similar to the one given in Lemma~\ref{lemma:countingvectors}.

Now let us look at the congruence equations~(\ref{eq:Case4}) with conditions~(\ref{eq:Condition4}) and congruence
equations~(\ref{eq:Case5}) with conditions~(\ref{eq:Condition5}). Existence of such a solution $\ul{e}$ in any of these congruence equations implies
\begin{itemize}
\item If $r_{1} \leq r_{2}$ then
\begin{equation}
\label{eq:Case6}
\begin{aligned}
b \equiv a y_1 y_2\ (mod\ \ \gp^{r_{1}})\\
\end{aligned}
\end{equation}
\item If $r_{1} > r_{2}$ then
\begin{equation}
\label{eq:Case7}
\begin{aligned}
b \equiv a y_1 y_2\ (mod\ \ \gp^{r_{2}})\\
\end{aligned}
\end{equation}
and
\begin{equation}
\label{eq:Condition7}
\begin{aligned}
&\{\gp^{-r_{2}}(b-a y_1 y_2)\ (mod\ \ \gp),& \gp^{-s}a\ (mod\ \ \gp)\} \text{ are linearly independent in } \mbb{F}_q^{\gr}\\
\end{aligned}
\end{equation}
\end{itemize}
Suppose $r_{1} \leq r_{2}$. If there exists an element $\ul{e}$
satisfying equation~(\ref{eq:Case4}) and
condition~(\ref{eq:Condition4}) then we choose $\ul{\ti{e}} = a y_2
+ \gp^{r_{2}} \ga \ (mod\ \ \gp^t)$ for some $\ga \in \A^{\gr}$ such
that
\begin{itemize}
\item $\ga \ (mod\ \ \gp)$ is linearly independent with $\gp^{-s}a\ (mod\ \ \gp)$ in $\mbb{F}_q^{\gr}$.
\end{itemize}
This element $\ul{\ti{e}}$ gives rise to a solution to
equation~(\ref{eq:Case5}) satisfying the
condition~(\ref{eq:Condition5}).

Conversely if there exists an element $\ul{e}$ satisfying
equation~(\ref{eq:Case5}) and condition~(\ref{eq:Condition5}) then
we choose $\ul{\ti{e}} = b y^{-1}_2 + \gp^{r_{2}} y^{-1}_2 \ga \
(mod\ \ \gp^t)$ for some $\ga \in \A^{\gr}$ such that
\begin{itemize}
\item $\ga \ (mod\ \ \gp)$ is linearly independent with $\gp^{-s}a\ (mod\ \ \gp)$ in $\mbb{F}_q^{\gr}$.
\end{itemize}
This element $\ul{\ti{e}}$ gives rise to a solution to equation~(\ref{eq:Case4}) satisfying the condition~(\ref{eq:Condition4}).\\
Suppose $r_{1} > r_{2}$. If there exists an element $\ul{e}$
satisfying equation~(\ref{eq:Case4}) and
condition~(\ref{eq:Condition4}) then choose $\ul{\ti{e}} = b
y_1^{-1}$. This element $\ul{\ti{e}}$ gives rise to a solution to
equation~(\ref{eq:Case5}) satisfying the
condition~(\ref{eq:Condition5}).

Conversely if there exists an element $\ul{e}$ satisfying
equation~(\ref{eq:Case5}) and condition~(\ref{eq:Condition5}) then
we choose $\ul{\ti{e}} = a y_1$. This element $\ul{\ti{e}}$ gives
rise to a solution to equation~(\ref{eq:Case4}) satisfying the
condition~(\ref{eq:Condition4}).

To exactly count the cardinality of the number of solutions, we
use standard filtrations of $\A_{n}$ and deduce that the number of
solutions $e \in \A_t^{\gr}$ to the set of
equations~(\ref{eq:Case4}) satisfying the
conditions~(\ref{eq:Condition4}) is same as the number of solutions
$e \in \A_t^{\gr}$ to the set of equations~(\ref{eq:Case5})
satisfying the conditions~(\ref{eq:Condition5}) and it is given by
\begin{itemize}
\item $q^{\gr(t-r_{2}-1)}(q^{\gr}-q)$ if $s \leq r_{1} < r_{2}$\\
\item $q^{\gr(t-r_{1})}$ if $s \leq r_{2} < r_{1}$\\
\item $|(ay_1y_2+\gp^{r_{1}=r_{2}}\A_t^{\gr}\cap(b+\gp^{r_{1}=r_{2}}S)| = |(ay_1y_2+\gp^{r_{1}=r_{2}}S\cap(b+\gp^{r_{1}=r_{2}}\A_t^{\gr})|$
if $s < r_{1} = r_{2}$ where $S \subs \A_t^{\gr}$ is a set such that
$S \ (mod\ \ \gp)$ is a set of vectors linearly independent to
$\gp^{-s}a\ (mod\ \ \gp)$ in $\mbb{F}_q^{\gr}$. This cardinality can be easily calculated to be $q^{\gr(t-r-1)}(q^{\gr}-q)$ where $r=r_{1}=r_{2}$.\\
\item Cardinality of the set $\{e \in \gp^{s}\A_t^{\gr} \mid \gp^{-s}e\ (mod\ \ \gp)$ is linearly independent to $\gp^{-s}a\ (mod\ \ \gp)$ in
$\mbb{F}_q^{\gr}\}$ = Cardinality of the set $\{e \in
\gp^{s}\A_t^{\gr} \mid \gp^{-s}e\ (mod\ \ \gp)$ is linearly
independent to $\gp^{-s}b\ (mod\ \ \gp)$ in $\mbb{F}_q^{\gr}\}$ if
$s = r_{1} = r_{2}$ Again this cardinality can be easily calculated
and it is $q^{\gr(t-s-1)}(q^{\gr}-q)$.
\end{itemize}

Again the proof of the cardinalities of the number of solutions is similar to the one given in Lemma~\ref{lemma:countingvectors}.
\end{proof}
Now we can prove the commutativity of the convolution in the
endomorphism algebra $\Endo_{\autgp}(\mbb{C}[\grpp^{I^*}])$.
\begin{theorem}
\label{theorem:Commutativity} Let $\ugl$ be a partition. Let $\grpp$
be the corresponding finite $\A$-module. Let $\autgp$ be its
automorphism group. Let $I$ be an ideal in $J(\mcl{P})_{\gl}$.
Suppose $\mbb{F}_q \cong \AAA {}$ has at least three elements. Then
the endomorphism algebra $\Endo_{\autgp}(\mbb{C}[\grpp^{I^*}])$ is
commutative.
\end{theorem}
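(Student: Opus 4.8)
The plan is to convert the statement into a counting identity, factor that count over the isotypic parts of $\grpp$, and then read off each local count from the three counting lemmas above.

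\emph{Reduction to a counting identity.} Following the sketch in Section~2, I would first identify $\Endo_{\autgp}(\mbb{C}[\grpp^{I^*}])$ with the convolution algebra $\mbb{C}_{\autgp}[\grpp^{I^*}\times\grpp^{I^*}]$, whose natural basis consists of the indicator functions $\mcl{I}_{\mcl{O}}$ of the transitive subsets $\mcl{O}\subs\grpp^{I^*}\times\grpp^{I^*}$. Commutativity then amounts to $\mcl{I}_{\mcl{O}_1}*\mcl{I}_{\mcl{O}_2}=\mcl{I}_{\mcl{O}_2}*\mcl{I}_{\mcl{O}_1}$ for every pair of transitive subsets, i.e. to the assertion that for all $(\ga,\gb)\in\grpp^{I^*}\times\grpp^{I^*}$,
\begin{equation*}
\big|\{\gga\mid(\ga,\gga)\in\mcl{O}_1,\ (\gga,\gb)\in\mcl{O}_2\}\big|=\big|\{\gga\mid(\ga,\gga)\in\mcl{O}_2,\ (\gga,\gb)\in\mcl{O}_1\}\big|.
\end{equation*}
For $I=\es$ we have $\grpp^{I^*}=\{\ul0\}$ and the permutation representation is trivial, so I would assume $I\neq\es$ from now on.

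\emph{Factoring the count over isotypic blocks.} Next I would invoke Theorems~\ref{theorem:CanonicalSimilarPair},~\ref{theorem:GeneralOrbitPairMax},~\ref{theorem:GeneralOrbitPairNonMax} and~\ref{theorem:GeneralOrbitPairConverse}: each transitive subset $\mcl{O}\subs\grpp^{I^*}\times\grpp^{I^*}$ equals the product $\prod_{i=1}^k\mcl{O}^{(i)}$ of its projections onto the blocks $\A_{\gl_i}^{\gr_i}\times\A_{\gl_i}^{\gr_i}$, and the unit vector, the valuations $m_i$ and the linear-independence data defining these projections are determined by $\mcl{O}$. Writing $\ga=(\ga_i)$, $\gb=(\gb_i)$, $\gga=(\gga_i)$, the set counted above factors as $\prod_{i=1}^k\{\gga_i\mid(\ga_i,\gga_i)\in\mcl{O}_1^{(i)},\ (\gga_i,\gb_i)\in\mcl{O}_2^{(i)}\}$, and likewise for the interchanged version; so it is enough to prove, for each $i$,
\begin{equation*}
\big|\{\gga_i\mid(\ga_i,\gga_i)\in\mcl{O}_1^{(i)},(\gga_i,\gb_i)\in\mcl{O}_2^{(i)}\}\big|=\big|\{\gga_i\mid(\ga_i,\gga_i)\in\mcl{O}_2^{(i)},(\gga_i,\gb_i)\in\mcl{O}_1^{(i)}\}\big|.
\end{equation*}

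\emph{The local count.} Fix $i$. By Theorems~\ref{theorem:GeneralOrbitPairMax} and~\ref{theorem:GeneralOrbitPairNonMax} each $\mcl{O}_j^{(i)}$ is one of finitely many explicit shapes: a valuative condition $\gga_i-y\ga_i\in\gp^{m}\A_{\gl_i}^{\gr_i}$ or $\gp^{m}\A_{\gl_i}^{\gr_i*}$ (cases $A$ and $(\mathrm{i})$), a linear-independence condition between $\gp^{-m}(\gga_i-y\ga_i)$ and $\gp^{-s_i}\ga_i$ modulo $\gp$ (cases $B$ and $(\mathrm{ii})$), or the full product box (case $(\mathrm{iii})$); when $\gr_i=1$ only the valuative and full-box shapes occur. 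Imposing $(\ga_i,\gga_i)\in\mcl{O}_1^{(i)}$ and $(\gga_i,\gb_i)\in\mcl{O}_2^{(i)}$ then places $\gga_i$ in the intersection of two translated subsets, with the two units $y_1,y_2$ coming from $\mcl{O}_1$ and $\mcl{O}_2$ kept explicit, which is exactly the form addressed by the lemmas: Lemma~\ref{lemma:LemmaCommutativity} handles the cases where both conditions are valuative, and Lemmas~\ref{lemma:countingvectors} and~\ref{lemma:LinearIndependences} handle the remaining cases, in which at least one of the two conditions is a linear-independence condition. In each instance the relevant lemma shows both that a solution $\gga_i$ exists for one ordering precisely when one exists for the other, and that the two solution sets are equinumerous. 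Multiplying over $i$ then yields the desired identity, hence the convolution is commutative and $\Endo_{\autgp}(\mbb{C}[\grpp^{I^*}])$ is commutative.

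\emph{Where the hypothesis is used, and the main obstacle.} The assumption $|\mbb{F}_q|\geq 3$ enters only through Lemma~\ref{lemma:LinearIndependences}: composing two linear-independence conditions inside a block with $\gr_i=2$ and $q=2$ is exactly the degenerate situation in which one cannot always transfer a solution from one ordering to the other, and that is the case excluded here. The bulk of the work is organizational rather than computational: one must check that the product decomposition of transitive pair-sets furnished by Theorems~\ref{theorem:CanonicalSimilarPair}--\ref{theorem:GeneralOrbitPairConverse} is honest, and enumerate the finitely many combinations of block-shapes $(\mcl{O}_1^{(i)},\mcl{O}_2^{(i)})$ so that each is subsumed under one of the three lemmas; once this bookkeeping is in place, the assembly above is immediate, and the multiplicity freeness of $\mbb{C}[\grpp^{I^*}]$ follows.
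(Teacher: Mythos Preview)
Your proposal is correct and follows essentially the same route as the paper's own proof: reduce commutativity of the convolution algebra to the componentwise counting identity, use the product description of transitive pair-sets from Theorems~\ref{theorem:CanonicalSimilarPair}--\ref{theorem:GeneralOrbitPairConverse} to factor the count over the isotypic blocks, and then dispatch each local count by matching the block shapes to Lemmas~\ref{lemma:LemmaCommutativity} and~\ref{lemma:LinearIndependences} (with Lemma~\ref{lemma:countingvectors} feeding into the latter). The paper carries out the case enumeration more explicitly---splitting into the pairs $(A,a),(A,b),(B,a),(B,b)$ on max components and $\{A,B,C\}\times\{a,b,c\}$ on non-max components---but the content is the same, and your identification of Lemma~\ref{lemma:LinearIndependences} as the sole place where $|\mbb{F}_q|\geq 3$ is needed is exactly right.
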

\begin{proof}
For $j=1,2$ let $\mcl{O}_j \subs \grpp^{I^*} \times \grpp^{I^*}$
denote two transitive sets. Let $(\ul{a},\ul{b}) \in \grpp^2$. Then
$\mcl{I}_{\mcl{O}_1}*\mcl{I}_{\mcl{O}_2}(\ul{a},\ul{b}) = 0 =
\mcl{I}_{\mcl{O}_2}*\mcl{I}_{\mcl{O}_1}(\ul{a},\ul{b})$ if
$(\ul{a},\ul{b}) \nin \grpp^{I^*} \times \grpp^{I^*}$. So assume
$(\ul{a},\ul{b}) \in \grpp^{I^*} \times \grpp^{I^*}$.

Now suppose $(\partial_{\gl_i}I,\gl_i) \in \max(I)$. Then the
$i^{th}$-component of orbit of pair corresponding to the
$i^{th}$-component $\gp^{\partial_{\gl_i}I}\A_{\gl_i}^{\gr_i*}$ of
the orbit $\grpp^{I^*}$ is given by
\begin{enumerate}[label=\Alph*$\bm{.}$]
\item
$(\mcl{O}_1)_{\gl_i} =  \{(a_i,b_i) \in
(\gp^{\partial_{\gl_i}I}\A_{\gl_i}^{\gr_i*} \times
\gp^{\partial_{\gl_i}I}\A_{\gl_i}^{\gr_i*}
\mid b_i-a_iy_1 \in \gp^{r_{i1}}\A_{\gl_i}^{\gr_i} \text{ for some } r_{i1} > \partial_{\gl_i}I \text{ and for some } y_1 \in \A^{*}\}$\\
$\textbf{ OR }$\\
\item
$(\mcl{O}_1)_{\gl_i}    =   \{(a_i,b_i) \in
\gp^{\partial_{\gl_i}I}\A_{\gl_i}^{\gr_i*} \times
\gp^{\partial_{\gl_i}I}\A_{\gl_i}^{\gr_i*} \mid b_i-a_iy_1 \in
\gp^{r_{i1}}\A_{\gl_i}^{\gr_i*}\text{ for some } r_{i1} \geq
\partial_{\gl_i}I \text{ and for some } y_1 \in \A^{*}
\text{ and } \gp^{-r_{i1}}(b_i-a_iy_1) (mod\ \ \gp)\\ \text{ is
linearly independent with }\gp^{-\partial_{\gl_i}I}a_i (mod\ \ \gp)
\text{ in } (\mbb{F}_q)^{\gr_i}\}$
\end{enumerate}

and

\begin{enumerate}[label=\alph*$\bm{.}$]
\item
$(\mcl{O}_2)_{\gl_i} =  \{(a_i,b_i) \in
\gp^{\partial_{\gl_i}I}\A_{\gl_i}^{\gr_i*}\times
\gp^{\partial_{\gl_i}I}\A_{\gl_i}^{\gr_i*}
\mid b_i-a_iy_2 \in \gp^{r_{i2}}\A_{\gl_i}^{\gr_i} \text{ for some } r_{i2} > \partial_{\gl_i}I \text{ and for some } y_2 \in \A^{*}\}$\\
$\textbf{ OR }$\\
\item
$(\mcl{O}_2)_{\gl_i}    =   \{(a_i,b_i) \in
\gp^{\partial_{\gl_i}I}\A_{\gl_i}^{\gr_i*} \times
\gp^{\partial_{\gl_i}I}\A_{\gl_i}^{\gr_i*} \mid b_i-a_iy_2 \in
\gp^{r_{i2}}\A_{\gl_i}^{\gr_i*} \text{ for some } r_{i2} \geq
\partial_{\gl_i}I \text{and for some } y_2 \in \A^{*}
\text{ and } \gp^{-r_{i2}}(b_i-a_iy_2) (mod\ \ \gp)\\ \text{ is
linearly independent with }\gp^{-\partial_{\gl_i}I}a_i (mod\ \ \gp)
\text{ in } (\mbb{F}_q)^{\gr_i}\}$
\end{enumerate}

Consider case $A$ and case $a$.  From the
Lemma~\ref{lemma:LemmaCommutativity} the number of solutions $e_i
\in \gp^{\partial_{\gl_i}I}\A_{\gl_i}^{\gr_i*}$ such that
$e_i-a_iy_1 \in \gp^{r_{i1}}\A_{\gl_i}^{\gr_i}$ and $b_i-e_iy_2 \in
\gp^{r_{i2}}\A_{\gl_i}^{\gr_i}$ is the same as the number of
solutions $e_i \in \gp^{\partial_{\gl_i}I}\A_{\gl_i}^{\gr_i*}$ such
that $e_i-a_iy_2 \in \gp^{r_{i2}}\A_{\gl_i}^{\gr_i}$ and $b_i-e_iy_1
\in \gp^{r_{i1}}\A_{\gl_i}^{\gr_i}$.

Consider case $B$ and case $a$. From the
Lemma~\ref{lemma:LinearIndependences} the number of solutions $e_i
\in \gp^{\partial_{\gl_i}I}\A_{\gl_i}^{\gr_i*}$ such that
$e_i-a_iy_1 \in \gp^{r_{i1}}\A_{\gl_i}^{\gr_i},
\gp^{-r_{i1}}(e_i-a_iy_1) (mod\ \ \gp) \text{ is linearly
independent with }\\ \gp^{-\partial_{\gl_i}I}a_i (mod\ \ \gp) \text{
in } (\mbb{F}_q)^{\gr_i}$ and $b_i-e_iy_2 \in
\gp^{r_{i2}}(\A_{\gl_i}^{\gr_i}$ is the same as the number of
solutions $e_i \in \gp^{\partial_{\gl_i}I}\A_{\gl_i}^{\gr_i*}$ such
that $e_i-a_iy_2 \in \gp^{r_{i2}}\A_{\gl_i}^{\gr_i}$ and
$b_i-e_iy_1 \in \gp^{r_{i1}}\A_{\gl_i}^{\gr_i},\\
\gp^{-r_{i1}}(b_i-e_iy_1) (mod\ \ \gp) \text{ is linearly
independent with } \gp^{-\partial_{\gl_i}I}e_i (mod\ \ \gp) \text{
in } (\mbb{F}_q)^{\gr_i}$.

Consider case $A$ and case $b$. This case is similar to the above case $B$ and case $a$.

Consider case $B$ and case $b$. Again from the
Lemma~\ref{lemma:LinearIndependences} the number of solutions $e_i
\in \gp^{\partial_{\gl_i}I}\A_ {\gl_i}^{\gr_i*}$ such that
$e_i-a_iy_1 \in \gp^{r_{i1}}\A_{\gl_i}^{\gr_i},
\gp^{-r_{i1}}(e_i-a_iy_1) (mod\ \ \gp) \text{ is linearly
independent with }\\ \gp^{-\partial_{\gl_i}I}a_i (mod\ \ \gp) \text{
in } (\mbb{F}_q)^{\gr_i}$ and $b_i-e_iy_2 \in
\gp^{r_{i2}}\A_{\gl_i}^{\gr_i}, \gp^{-r_{i2}}(b_i-e_iy_2) (mod\ \
\gp) \text{ is linearly }\\ \text{ independent with }
\gp^{-\partial_{\gl_i}I}e_i (mod\ \ \gp) \text{in }
(\mbb{F}_q)^{\gr_i}$ is the same as the number of solutions $e_i \in
\gp^{\partial_{\gl_i}I}\A_{\gl_i}^{\gr_i*}$ such that $e_i-a_iy_2
\in \gp^{r_{i2}}\A_{\gl_i}^{\gr_i}, \gp^{-r_{i2}}(e_i-a_iy_2) (mod\
\ \gp) \text{ is linearly independent with }\\
\gp^{-\partial_{\gl_i}I}a_i (mod\ \ \gp) \text{ in }
(\mbb{F}_q)^{\gr_i}$ and $b_i-e_iy_1 \in
\gp^{r_{i1}}\A_{\gl_i}^{\gr_i}, \gp^{-r_{i1}}(b_i-e_iy_1) (mod\ \
\gp)\\ \text{is linearly independent with }
\gp^{-\partial_{\gl_i}I}e_i (mod\ \ \gp) \text{ in }
(\mbb{F}_q)^{\gr_i}$.

Now suppose $(\partial_{\gl_i}I,\gl_i) \nin \max(I)$. Then the
$i^{th}$-component of orbit of pair corresponding to the
$i^{th}$-component $\gp^{\partial_{\gl_i}I}\A_{\gl_i}^{\gr_i}$ of
the orbit $\grpp^{I^*}$ is given by
\begin{enumerate}[label=\Alph*$\bm{.}$]
\item
$(\mcl{O}_1)_{\gl_i} =  \{(a_i,b_i) \in \gp^{\partial_{\gl_i}I}\A_{\gl_i}^{\gr_i} \times \gp^{\partial_{\gl_i}I}\A_{\gl_i}^{\gr_i}\}$\\
$\textbf{ OR }$\\
\item
$(\mcl{O}_1)_{\gl_i} =  \{(a_i,b_i) \in
\gp^{\partial_{\gl_i}I}\A_{\gl_i}^{\gr_i} \times
\gp^{\partial_{\gl_i}I}\A_{\gl_i}^{\gr_i} \mid
b_i-a_iy_1 \in \gp^{r_{i1}}\A_{\gl_i}^{\gr_i} \text{ for some } r_{i1} > \partial_{\gl_i}I \text{ and for some } y_1 \in \A^{*}\}$\\
$\textbf{ OR }$\\
\item
$(\mcl{O}_1)_{\gl_i}    =   \{(a_i,b_i) \in
(\gp^{\partial_{\gl_i}I}\A_{\gl_i}^{\gr_i} \times
\gp^{\partial_{\gl_i}I}\A_{\gl_i}^{\gr_i} \mid b_i-a_iy_1 \in
\gp^{r_{i1}}\A_{\gl_i}^{\gr_i*} \text{ for some } r_{i1} \geq
\partial_{\gl_i}I \text{ and for some } y_1 \in \A^{*}\}$
\end{enumerate}

and

\begin{enumerate}[label=\alph*$\bm{.}$]
\item
$(\mcl{O}_2)_{\gl_i} =  \{(a_i,b_i) \in \gp^{\partial_{\gl_i}I}\A_{\gl_i}^{\gr_i} \times \gp^{\partial_{\gl_i}I}\A_{\gl_i}^{\gr_i}\}$\\
$\textbf{ OR }$\\
\item
$(\mcl{O}_2)_{\gl_i} =  \{(a_i,b_i) \in
\gp^{\partial_{\gl_i}I}\A_{\gl_i}^{\gr_i} \times
\gp^{\partial_{\gl_i}I}\A_{\gl_i}^{\gr_i} \mid b_i-a_iy_2 \in
\gp^{r_{i2}}\A_{\gl_i}^{\gr_i} \text{ for some } r_{i2} >
\partial_{\gl_i}I \text{ and for some } y_2 \in
\A^{*}\}$
$\textbf{ OR }$\\
\item
$(\mcl{O}_1)_{\gl_i}    =   \{(a_i,b_i) \in
\gp^{\partial_{\gl_i}I}\A_{\gl_i}^{\gr_i} \times
\gp^{\partial_{\gl_i}I}\A_{\gl_i}^{\gr_i} \mid b_i-a_iy_2 \in
\gp^{r_{i2}}\A_{\gl_i}^{\gr_i*} \text{ for some } r_{i2} \geq
\partial_{\gl_i}I \text{ and for some } y_2 \in \A^{*}\}$
\end{enumerate}
Here in all pairs of the cases $\{A,B,C\} \times \{a,b,c\}$, we have from the Lemma~\ref{lemma:LemmaCommutativity},
the number of solutions $e_i$ in both way convolutions agree for each pair.

Hence $\mcl{I}_{\mcl{O}_1}*\mcl{I}_{\mcl{O}_2}(\ul{a},\ul{b}) =
\mcl{I}_{\mcl{O}_2}*\mcl{I}_{\mcl{O}_1}(\ul{a},\ul{b})$ for all
$(\ul{a},\ul{b}) \in \grpp^2$ and the endomorphism algebra
$\Endo_{\autgp}(\mbb{C}[\grpp^{I^*}])$ is commutative.
\end{proof}

Now we state the theorem regarding the multiplicity free nature of
the permutation representation.
\begin{theorem}
\label{theorem:MultFree} Let $\ugl$ be a partition. Let $\grpp$ be
the corresponding finite $\A$-module. Let $\autgp$ be its
automorphism group. Let $I$ be an ideal in $J(\mcl{P})_{\gl}$.
Suppose $\mbb{F}_q \cong \AAA {}$ has at least three elements. Then
the permutation representation $\mbb{C}[\grpp^{I^*}]$ of $\autgp$ is
multiplicity-free.
\end{theorem}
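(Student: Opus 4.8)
The plan is to obtain this statement as an immediate consequence of Theorem~\ref{theorem:Commutativity} together with the standard correspondence, recorded in the Remark above, between multiplicity-free representations and commutative endomorphism algebras. First I would recall why that correspondence holds: for a finite group $G$ and a finite-dimensional complex representation $V$, writing the isotypic decomposition $V \cong \bigoplus_i V_i^{\oplus m_i}$, Schur's lemma identifies $\Endo_G(V) \cong \prod_i M_{m_i}(\mbb{C})$, and this product of full matrix algebras is commutative precisely when every multiplicity $m_i$ is at most $1$. Thus proving that $\mbb{C}[\grpp^{I^*}]$ is multiplicity-free is equivalent to proving that $\Endo_{\autgp}(\mbb{C}[\grpp^{I^*}])$ is commutative.

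Second, I would invoke Theorem~\ref{theorem:Commutativity} with the hypothesis that $\mbb{F}_q \cong \AAA{}$ has at least three elements: it asserts exactly that $\Endo_{\autgp}(\mbb{C}[\grpp^{I^*}])$ is a commutative algebra. Under the general identification of the endomorphism algebra of a permutation representation with the convolution algebra $\mbb{C}_{\autgp}[\grpp^{I^*}\times\grpp^{I^*}]$ of $\autgp$-invariant functions on $\grpp^{I^*}\times\grpp^{I^*}$, spanned by the indicator functions $\mcl{I}_{\mcl{O}}$ of the transitive subsets $\mcl{O}$ of the diagonal action, Theorem~\ref{theorem:Commutativity} says precisely that $\mcl{I}_{\mcl{O}_1}*\mcl{I}_{\mcl{O}_2} = \mcl{I}_{\mcl{O}_2}*\mcl{I}_{\mcl{O}_1}$ for all pairs of transitive subsets. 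Combining the two steps gives the theorem.

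Accordingly, the real work is not in this statement but has already been carried out upstream: the componentwise description of the transitive subsets of $\grpp^{I^*}\times\grpp^{I^*}$ in Theorems~\ref{theorem:CanonicalSimilarPair}, \ref{theorem:GeneralOrbitPairMax}, \ref{theorem:GeneralOrbitPairNonMax}, \ref{theorem:GeneralOrbitPairConverse}, and the counting identities of Lemmas~\ref{lemma:LemmaCommutativity}, \ref{lemma:countingvectors}, \ref{lemma:LinearIndependences} that feed into Theorem~\ref{theorem:Commutativity}. The one point demanding care is the hypothesis $q\geq 3$: it is essential, since the case $r_1 = r_2$ with multiplicity $\gr = 2$ over a residue field of two elements forces the auxiliary vector $\ga$ in Lemma~\ref{lemma:LinearIndependences} not to exist, and the failure of the characteristic-submodule lattice description for $\Z/2^3\Z\oplus\Z/2\Z$ shows that the orbit-of-pairs picture itself degenerates at $q = 2$. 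So the only thing I would verify when writing this corollary is that no hypothesis beyond $q\geq 3$ is silently invoked along the chain of results it depends on; granting that, the proof is a single line.
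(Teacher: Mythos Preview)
Your proposal is correct and matches the paper's own proof essentially verbatim: the paper simply cites Theorem~\ref{theorem:Commutativity} for the commutativity of $\Endo_{\autgp}(\mbb{C}[\grpp^{I^*}])$ and then invokes the Remark that a representation is multiplicity-free if and only if its endomorphism algebra is commutative. Your additional explanation via Schur's lemma and your commentary on the role of $q\geq 3$ are accurate elaborations, but the core argument is identical.
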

\begin{proof}
From Theorem~\ref{theorem:Commutativity}, we observe that the
endomorphism algebra $\Endo_{\autgp}(\mbb{C}[\grpp^{I^*}])$ is
commutative. So it follows that the permutation representation
$\mbb{C}[\grpp^{I^*}]$ of $\autgp$ is multiplicity-free.
\end{proof}
\section{Appendix}
\label{sec:Appendix}
Let $\grpp$ denote the abelian group corresponding to the partition $\ugl$. In this section we prove the existence of a certain combinatorial lattice structure
among certain $B-$valued subsets of $\grpp$ defined below in order to motivate Definition~\ref{def:Combinatorial}.
This is a bigger lattice (additively closed for supremum) when compared to the lattice of characteristic subgroups of abelian $p-$groups for $p>2$.

\begin{defn}[B-Val]
Let $B = \{0,1\} \subs \N \cup \{0\}$. A B-val $\ul a$ is an element of $\us{l \in \N}{\bc} \{0,1\}^{l}$.
\end{defn}
\begin{defn}[B-Set]
Fix an odd prime $p$ in this article. Let $\grpp$ denote the abelian group corresponding to
the partition $\ugl$. Let $(r_1,r_2,\ldots,r_k) \in \{1,2,\ldots,\gl_1\} \times \{1,2,\ldots,\gl_2\} \times \ldots \{1,2,\ldots,\gl_k\}$ and
$\ul{a}=(a_1,a_2,\ldots,a_k)$ $\in\nolinebreak \{0,1\}^{k}$. Define a $B-set$ denoted by $I(\ul r,\ul a)$ as the subset
$\us{i=1}{\os{k}{\prod}}p^{r_i}(\A^{\gr_i}_{\gl_i})^{a_i}$ of the abelian group $\grpp$ where $p^{r}(\A^{\gr}_{\gm})^{0}=p^{r}(\ZZ {\gm})^{\gr}$ and
$p^{r}(\A^{\gr}_{\gm})^{1}=p^{r}\big((\ZZ {\gm})^{\gr}\bs p(\ZZ {\gm})^{\gr}\big)$.
\end{defn}
\begin{defn}
\label{def:Lattice}
Let $\mcl{L}$ be a collection of subsets of $\grpp$. Define a lattice structure as follows. Let $L_1,L_2 \in \mcl{L}$ then $L_1 \wedge L_2$ is an element of $\mcl{L}$
which is the biggest set in $\mcl{L}$ which is contained in $L_1$ and $L_2$. Define $L_1 \vee L_2$ to be the smallest set in $\mcl{L}$ that contains
$L_1+L_2$. We note that $L_1+L_2=\{l_1+l_2 \in \grpp\mid l_1\in L_1 \subs \grpp,l_2 \in L_2 \subs \grpp\}$.
\end{defn}
\begin{remark}
In the case when the lattice $\mcl{L}$ is the lattice of characteristic subgroups this definition is not needed as any subgroup $L$ which contains two groups $L_1,L_2$ as subgroups also contains the group $L_1+L_2$ generated by both of them. i.e. Note the least upper bound(supremum) is
additively closed instead of just closed under the set union.

Also we note that the lattice of characteristic subgroups has two way distributivity $(L_1 \vee L_2) \wedge L_3=(L_1 \wedge L_3) \vee (L_2 \wedge L_3),(L_1 \wedge L_2) \vee L_3 =(L_1 \vee L_3) \wedge (L_2 \vee L_3)$. This is because of the following identities.
For any real numbers $r,s,t$ we have
\begin{equation*}
max(min(r,s),t)=min(max(r,t),max(s,t))
\end{equation*}
\begin{equation*}
min(max(r,s),t)= max(min(r,t),min(s,t))
\end{equation*}
\end{remark}

The following two lemmas prove that the set of all such $B-$valued subsets is a lattice with respect to the definition~\ref{def:Lattice}.
\begin{lemma}
\begin{center}
$I(\ul r,\ul a) \cap I(\ul s,\ul b) =
\left\{
\begin{array}{c}
I(\ul r \cup \ul s,\ul a + \ul b - \ul a\ul b) \\
\es\ \text{ if there exists } i \text{ such that } r_i<s_i, a_i=1\\ \text{ or } s_i<r_i, b_i=1.
\end{array}
\right.
$
\end{center}
where $\ul a \ul b=(a_1b_1,\ldots,a_kb_k)$ and $\ul r \cup \ul s=(max(r_i,s_i):i=1,2,\ldots,k)$.
\end{lemma}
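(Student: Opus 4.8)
The plan is to reduce the identity to a statement about a single cyclic isotypic block and then verify it by comparing $p$-adic valuations of vectors.

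First I would use that intersection distributes over direct products. Writing $A_i=p^{r_i}(\A^{\gr_i}_{\gl_i})^{a_i}$ and $B_i=p^{s_i}(\A^{\gr_i}_{\gl_i})^{b_i}$ as subsets of the block $(\ZZ{\gl_i})^{\gr_i}$, one has $I(\ul r,\ul a)\cap I(\ul s,\ul b)=\us{i=1}{\os{k}{\prod}}(A_i\cap B_i)$; since a finite product of sets is empty exactly when one of its factors is empty, it suffices to prove, for each fixed $i$, the single-block identity: $A_i\cap B_i=\es$ precisely when $r_i<s_i,\,a_i=1$ or $s_i<r_i,\,b_i=1$, and otherwise $A_i\cap B_i=p^{\max(r_i,s_i)}(\A^{\gr_i}_{\gl_i})^{a_i+b_i-a_ib_i}$.

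To prove the single-block identity I would record the valuative description of the two kinds of building blocks. Writing $\gm=\gl_i$, $\gr=\gr_i$ and letting $\mrm{val}(v)$ be the minimum of the $p$-adic valuations of the coordinates of $v\in(\ZZ{\gm})^{\gr}$ (with $\mrm{val}(0)=\gm$), one has $p^{r}(\A^{\gr}_{\gm})^{0}=\{v\mid\mrm{val}(v)\geq r\}$ and $p^{r}(\A^{\gr}_{\gm})^{1}=\{v\mid\mrm{val}(v)=r\}$ (which degenerates to $\{0\}$ when $r=\gm$); in particular $p^{r}(\A^{\gr}_{\gm})^{1}\subs p^{r}(\A^{\gr}_{\gm})^{0}$. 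The set $A_i\cap B_i$ is then cut out by the two valuation conditions attached to $(r_i,a_i)$ and $(s_i,b_i)$, and I would finish by a short case analysis on the comparison of $r_i$ and $s_i$ and on the bits $a_i,b_i$: if $a_i=b_i=0$ the conditions combine to $\mrm{val}(v)\geq\max(r_i,s_i)$; if exactly one bit equals $1$, the conditions are that $\mrm{val}(v)$ equals that exponent and is at least the other exponent, which is incompatible exactly when the strict-inequality/empty condition of the statement holds and otherwise forces $\mrm{val}(v)=\max(r_i,s_i)$; and if both bits equal $1$ they force $\mrm{val}(v)=r_i=s_i$ or are incompatible. In every nonempty case the surviving set is $\{v\mid\mrm{val}(v)\geq\max(r_i,s_i)\}$ or $\{v\mid\mrm{val}(v)=\max(r_i,s_i)\}$, the latter occurring exactly when $a_i=1$ or $b_i=1$, i.e. exactly when the new bit is $a_i+b_i-a_ib_i$. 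Reassembling over $i$ and using $\ul r\cup\ul s=(\max(r_i,s_i))_i$ yields the claimed formula.

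The step that needs care is not hard but is the only subtle one: pinning down the valuative description of $p^{r}(\A^{\gr}_{\gm})^{1}$, including the boundary case $r=\gm$ where it collapses to $\{0\}$ (the displayed formula still holds there because then $\max(r_i,s_i)=\gm$ as well), and checking that the ``empty'' list in the statement matches exactly the incompatible cases of the case analysis, so that no case is missing and none is spurious.
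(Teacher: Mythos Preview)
Your proposal is correct and follows essentially the same route as the paper: reduce to a single isotypic block via the product structure, then do a case analysis on $(a_i,b_i)$ and the comparison $r_i$ versus $s_i$. The paper simply tabulates the eight intersections $p^{r_i}(\A_{\gl_i}^{\gr_i})^{a_i}\cap p^{s_i}(\A_{\gl_i}^{\gr_i})^{b_i}$ directly, whereas you organize the same check through the valuation function $\mrm{val}(v)$; this makes the argument slightly cleaner and also makes the boundary case $r=\gm$ explicit, which the paper leaves implicit.
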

\begin{proof}
Let $i \in \{1,2,3,\ldots,k\}$. If $r_i=s_i$ then
\begin{itemize}
\item  $p^{r_i}\A_{\gl_i}^{\gr_i*}\cap p^{s_i}\A_{\gl_i}^{\gr_i*}=p^{r_i=s_i}\A_{\gl_i}^{\gr_i*}$.
\item  $p^{r_i}\A_{\gl_i}^{\gr_i}\cap p^{s_i}\A_{\gl_i}^{\gr_i*}=p^{r_i=s_i}\A_{\gl_i}^{\gr_i*}$.
\item  $p^{r_i}\A_{\gl_i}^{\gr_i*}\cap p^{s_i}\A_{\gl_i}^{\gr_i}=p^{r_i=s_i}\A_{\gl_i}^{\gr_i*}$.
\item  $p^{r_i}\A_{\gl_i}^{\gr_i}\cap p^{s_i}\A_{\gl_i}^{\gr_i}=p^{r_i=s_i}\A_{\gl_i}^{\gr_i}$.
\end{itemize}
If $r_i \neq s_i$ then assume without loss of generality $r_i<r_i+1\leq s_i$.
\begin{itemize}
\item  $p^{r_i}\A_{\gl_i}^{\gr_i*}\cap p^{s_i}\A_{\gl_i}^{\gr_i*}=\es$.
\item  $p^{r_i}\A_{\gl_i}^{\gr_i}\cap p^{s_i}\A_{\gl_i}^{\gr_i*}=p^{s_i}\A_{\gl_i}^{\gr_i*}$.
\item  $p^{r_i}\A_{\gl_i}^{\gr_i*}\cap p^{s_i}\A_{\gl_i}^{\gr_i}=\es$.
\item  $p^{r_i}\A_{\gl_i}^{\gr_i}\cap p^{s_i}\A_{\gl_i}^{\gr_i}=p^{s_i}\A_{\gl_i}^{\gr_i}$.
\end{itemize}
This proves the lemma.
\end{proof}
\begin{lemma}[Sum of Two B-Sets Lemma]
Let $I(\ul r,a),I(\ul s,b)$ denote two B-subsets of $\grpp$ with B-val $\ul a,\ul b$. Then $I(\ul r,\ul a)+I(\ul s,\ul b)$ is a B-set. The B-val associated to the B-set $I(\ul r,\ul a)+I(\ul s,\ul b)$ is given by $\ul c = 1/2((a+b)sgn(r-s)-(a-b))sgn(r-s)$.
The B-set $I(\ul r,\ul a)+I(\ul s,\ul b) = I(\ul r \cap \ul s,\ul c)$ where $\ul r \cap \ul s=(min(r_i,s_i):i=1,2,\ldots,k)$.
\end{lemma}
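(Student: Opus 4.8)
The strategy is to reduce the computation to a single isotypic component. By definition both $I(\ul r,\ul a)$ and $I(\ul s,\ul b)$ are products $\prod_{i=1}^{k}$ of subsets of the summand $(\ZZ{\gl_i})^{\gr_i}$ of $\grpp$, and the Minkowski sum of two product sets is the product of the componentwise Minkowski sums. So it suffices to fix a module $M=(\ZZ{\gm})^{\gr}$ (playing the role of the $i$-th summand, $\gm=\gl_i$, $\gr=\gr_i$), exponents $r,s\in\{1,\dots,\gm\}$ and bits $a,b\in\{0,1\}$, and show $p^{r}M^{a}+p^{s}M^{b}=p^{\min(r,s)}M^{c}$, where $M^{0}=M$, $M^{1}=M^{*}:=M\setminus pM$ is the set of vectors having at least one unit coordinate, and $c$ is the value at the $i$-th coordinate of the stated expression $\tfrac{1}{2}\big((a+b)\,\mathrm{sgn}(r-s)-(a-b)\big)\,\mathrm{sgn}(r-s)$. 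A direct check shows this value equals $b$ when $r>s$, $a$ when $r<s$, and $0$ when $r=s$, so the identity unwinds into three mutually symmetric situations.

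I would first isolate the elementary facts that drive the case analysis. For $r<s$ the \emph{shift} $p^{r}u=p^{r}(u-p^{s-r}v)+p^{s}v$, together with the observation that $u-p^{s-r}v\equiv u\pmod p$ keeps a unit coordinate a unit coordinate, gives $p^{r}M^{*}+p^{s}M^{*}=p^{r}M^{*}$ and $p^{r}M^{*}+p^{s}M=p^{r}M^{*}$; combined with the trivial inclusions $p^{s}M\subseteq p^{r}M$ and $p^{s}M^{*}\subseteq p^{r}M$, this makes all four sums for $r<s$ collapse to $p^{r}M^{a}=p^{\min(r,s)}M^{a}$, matching $c=a$. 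The case $r>s$ is the mirror image and produces $p^{\min(r,s)}M^{b}$, matching $c=b$. When $r=s$ we have $\min(r,s)=r$ and $c=0$, so the claim becomes $p^{r}M^{a}+p^{r}M^{b}=p^{r}M$; if $a$ or $b$ equals $0$ this is immediate from $M^{*}\subseteq M$ and $M^{*}+M\supseteq M$.

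The only case with genuine arithmetic content is thus $r=s$ with $a=b=1$: one must show that every vector of $M$ is a sum of two vectors, each having a unit coordinate. If the multiplicity $\gr\ge 2$, split a target $w$ as $(1,\ast,\dots,\ast)+(\ast,1,0,\dots,0)$ in two distinct coordinate slots. If $\gr=1$, reduce modulo $p$: the statement becomes $\mbb{F}_{q}^{*}+\mbb{F}_{q}^{*}=\mbb{F}_{q}$, which holds precisely because $|\mbb{F}_{q}|\ge 3$ (in the appendix $q$ is a power of the fixed odd prime), and one then lifts a decomposition $\bar w=\bar a+\bar b$ to $\ZZ{\gm}$ by replacing $b$ with $b':=w-a$, still a unit since $b'\equiv b\pmod p$. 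Reassembling the componentwise identities and checking them against the sign formula completes the proof; together with the preceding lemma on intersections, this shows the collection $\mcl{L}$ of B-sets is closed under intersection and under Minkowski sum, hence is a lattice in the sense of Definition~\ref{def:Lattice}. I expect the only delicate point to be the clerical verification that $\tfrac{1}{2}\big((a+b)\,\mathrm{sgn}(r-s)-(a-b)\big)\,\mathrm{sgn}(r-s)$ reproduces $b$, $a$, $0$ in the three regimes; there is no conceptual obstacle.
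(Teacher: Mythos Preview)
Your proposal is correct and follows essentially the same approach as the paper: reduce to a single isotypic component and do the eight-fold case analysis on $(a,b)\in\{0,1\}^2$ and the sign of $r-s$. The paper simply lists the resulting identities and notes that the odd-prime hypothesis is used for $p^{r}M^{*}+p^{r}M^{*}=p^{r}M$, whereas you supply the actual justifications (the shift argument for $r\neq s$, and $\mbb{F}_q^{*}+\mbb{F}_q^{*}=\mbb{F}_q$ for $q\ge 3$ in the equal-exponent case); your verification that the sign formula yields $b$, $a$, $0$ in the three regimes is also something the paper leaves to the reader.
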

\begin{proof}
We have that $p$ is an odd prime. Let $i \in \{1,2,3,\ldots,k\}$. If $r_i=s_i$ then
\begin{itemize}
\item  $p^{r_i}\A_{\gl_i}^{\gr_i*}+ p^{s_i}\A_{\gl_i}^{\gr_i*}=p^{r_i=s_i}\A_{\gl_i}^{\gr_i},p$ is an odd prime.
\item  $p^{r_i}\A_{\gl_i}^{\gr_i}+ p^{s_i}\A_{\gl_i}^{\gr_i*}=p^{r_i=s_i}\A_{\gl_i}^{\gr_i}$.
\item  $p^{r_i}\A_{\gl_i}^{\gr_i*}+ p^{s_i}\A_{\gl_i}^{\gr_i}=p^{r_i=s_i}\A_{\gl_i}^{\gr_i}$.
\item  $p^{r_i}\A_{\gl_i}^{\gr_i}+ p^{s_i}\A_{\gl_i}^{\gr_i}=p^{r_i=s_i}\A_{\gl_i}^{\gr_i}$.
\end{itemize}
If $r_i \neq s_i$ then assume without loss of generality $r_i<r_i+1\leq s_i$.
\begin{itemize}
\item  $p^{r_i}\A_{\gl_i}^{\gr_i*}+p^{s_i}\A_{\gl_i}^{\gr_i*}=p^{r_i}\A_{\gl_i}^{\gr_i*}$.
\item  $p^{r_i}\A_{\gl_i}^{\gr_i}+p^{s_i}\A_{\gl_i}^{\gr_i*}=p^{r_i}\A_{\gl_i}^{\gr_i}$.
\item  $p^{r_i}\A_{\gl_i}^{\gr_i*}+p^{s_i}\A_{\gl_i}^{\gr_i}=p^{r_i}\A_{\gl_i}^{\gr_i*}$.
\item  $p^{r_i}\A_{\gl_i}^{\gr_i}+p^{s_i}\A_{\gl_i}^{\gr_i}=p^{r_i}\A_{\gl_i}^{\gr_i}$.
\end{itemize}
Here $sgn(r_i-s_i)=+1,0,-1$ according as $r_i>s_i,r_i=s_i,r_i<s_i$ respectively and $sgn(r-s)=(sgn(r_i-s_i)\mid i=1,2,\ldots,k)$.
Now the sum set equality
\begin{equation*}
I(\ul{r},\ul{a})+I(\ul{s},\ul{b})=I(\ul{r}\cap \ul{s},\ul{c}).
\end{equation*}
follows.
\end{proof}
\section{Acknowledgements}
The author is supported by an Indian Statistical Institute (ISI)
Grant in the position of the Visiting Scientist at ISI Bangalore,
India and also by an  Institute of Mathematical Sciences (IMSc)
Grant as Senior Research Scholar at IMSc Chennai, India. He thanks
Prof. Amritanshu Prasad, Prof. S. Viswanath, Prof. Vijay Kodiyalam
of IMSc, Chennai, India and Prof. B Sury of ISI, Bangalore, India.
This is also a part of the Ph.D thesis of the author while doing
Phd. at IMSc. Chennai.

\bibliographystyle{abbrv}
%\bibliography{refs}
\def\cprime{$'$}

\end{document}